\newcommand{\bbN}{\mathbb{N}}
\newcommand{\bbC}{\mathbb{C}}
\newcommand{\epi}{\twoheadrightarrow}
\newcommand{\defeq}{\mathrel{\mathop:}=}
\renewcommand{\phi}{\varphi}
\renewcommand{\epsilon}{\varepsilon}
\newcommand{\set}[1]{\ensuremath{\left\{#1\right\}}}
\newcommand{\scr}[1]{\ensuremath{\mathcal{#1}}}
\newcommand{\calC}{\scr C}
\newcommand{\calL}{\scr L}
\newcommand{\catO}{\scr O}
\newcommand{\tcatO}{\widetilde{\catO}}
\newcommand{\calF}{\scr F}
\newcommand{\tcalF}{\widetilde{\scr F}}
\newcommand{\calT}{\scr T}
\newcommand{\tcalT}{\widetilde{\scr T}}
\newcommand{\enva}[1]{\scr U(#1)}
\newcommand{\cntr}[1]{\scr Z(#1)}
\newcommand{\roots}{R}
\newcommand{\rbasis}{\Pi}
\newtheorem{proposition}{Proposition}[section]
\newtheorem{lemma}[proposition]{Lemma}
\newtheorem{theorem}[proposition]{Theorem}
\newtheorem{corollary}[proposition]{Corollary}
\theoremstyle{definition}
\newtheorem{remark}[proposition]{Remark}
\DeclareMathOperator{\Supp}{Supp}
\DeclareMathOperator{\End}{End}
\DeclareMathOperator{\Hom}{Hom}
\DeclareMathOperator{\PFun}{PFun}
\DeclareMathOperator{\TFun}{TFun}
\DeclareMathOperator{\IFun}{IFun}
\DeclareMathOperator{\weight}{w}
\DeclareMathOperator{\Id}{Id}
\DeclareMathOperator{\Tr}{Tr}
\DeclareMathOperator{\im}{im}
\def\clap#1{\hbox to 0pt{\hss#1\hss}}
\def\mathclap{\mathpalette\mathclapinternal}
\def\mathclapinternal#1#2{%
\clap{$\mathsurround=0pt#1{#2}$}}
\newcommand{\restrict}{\delimiter"6223379}
\newcommand{\lowbar}{\underline{\phantom{J}}}
\renewcommand{\frak}{\mathfrak}
\begin{document}

\title{Tensoring with infinite-dimensional\\ modules in $\scr O_0$}

\author{Johan K\aa hrstr\"om}

\maketitle

\abstract{We show that the principal block $\scr O_0$ of the BGG category
	$\scr O$ for a semisimple Lie algebra $\frak g$ acts faithfully on
	itself via exact endofunctors which preserve tilting modules,
	via right exact endofunctors which preserve projective
	modules and via left exact endofunctors which preserve
	injective modules. The origin of all these functors is
	tensoring with arbitrary (not necessarily finite-dimensional)
	modules in the category $\scr O$. We study such functors, describe
	their adjoints and show that they give rise to a natural
	(co)monad structure on $\scr O_0$. Furthermore, all
	this generalises to parabolic subcategories of $\scr O_0$.
	As an example, we present some explicit
	computations for the algebra $\frak{sl}_3$.}

\section{Introduction}
\label{sec:intro}

When studying the category $\catO$ for a semisimple Lie algebra $\frak g$,
tensoring with finite dimensional $\frak g$-modules gives rise to a class of functors
of high importance, the so called projective functors.
These functors were classified in~\cite{bg} and include the
``translation functors'', \cite{jantzen}, which can be used to
prove equivalences of certain subcategories of $\catO$.

In the following we study tensoring with
arbitrary (not necessarily finite dimensional) modules in $\catO$. There is an
immediate obstacle, namely the fact that, in general,
the result is no longer finitely generated (in other words, such functors do not
preserve $\catO$). This can be remedied by projecting
onto a fixed block of the category $\catO$. In particular, by composing
with projection to
the principal block $\catO_0$, we obtain a faithful, exact
functor $G\colon M\mapsto G_M\defeq M\otimes\lowbar\restrict_0$
from $\catO_0$ to the category $\End(\catO_0)$
of endofunctors on $\catO_0$. By, defining $F_M$ and $H_M$ to
be the left and right adjoints of $G_M$,
we obtain a right exact contravariant functor
$F\colon M\mapsto F_M$ and a left exact contravariant functor $H\colon M\mapsto H_M$
from $\catO$ to $\End(\catO_0)$.

In Section~\ref{sec:notation} we introduce the required notions and notation,
and provide a setting for studying the tensor product of arbitrary modules
in $\catO$.
In Section~\ref{sec:main} we define the three functors, and determine some
of their properties. The main properties are given by Theorem~\ref{thm:maintheorem}, which
shows that $F_M$ preserves projectives, $G_M$ preserves tilting modules,
and $H_M$ preserves injectives, for any $M\in\catO_0$.
In Section~\ref{sec:comonad} we show that the particular functors
$G_{\Delta(0)}$ and $G_{\nabla(0)}$ have natural comonad and monad
structures, respectively.
In Section~\ref{sec:parabolic} we show how the results from the previous section
generalize to parabolic subcategories of $\catO$.
Finally, in Section~\ref{sec:example} we compute the `multiplication tables'
$G_{M}N$ and $F_{M}N$ for the case $\frak g=\frak{sl}_3(\bbC)$,
where $M$ and $N$ run over the simple modules in $\catO_0$.

\smallskip
\noindent{\bf Acknowledgments:}
This paper develops some ideas of S.~Ovsienko and V.~Mazorchuk. The author
thanks V.~Mazorchuk for his many comments and suggestions.

\section{Notation and preliminaries}
\label{sec:notation}

For any Lie algebra $\frak a$, we let $\enva{\frak a}$ denote
its universal enveloping algebra.
Fix $\frak g$ to be a finite dimensional complex semisimple Lie algebra,
with a chosen triangular decomposition
$\frak g = \frak n_-\oplus \frak h\oplus \frak n_+$, let
$\frak b = \frak h\oplus \frak n_+$ denote the Borel subalgebra,
and let $\roots$ denote the corresponding root system,
with positive roots $\roots_+$, negative roots $\roots_-$, and
basis $\rbasis$.
Let $\scr O$ denote the corresponding BGG-category (see \cite{bgg} for details),
which can be defined as the full subcategory of the category
of $\frak g$-modules consisting of weight modules
that are finitely generated as $\enva{\frak n_-}$-modules

For a weight module $M$, we denote by $M_\lambda$ the subspace
of $M$ of weight $\lambda\in\frak h^*$, and by
$\Supp M\defeq\{\,\lambda\in\frak h^*\,\vert\,M_\lambda\neq\set{0}\,\}$
the support of $M$. For a weight vector $v\in M$, we denote
by $\weight(v)$ the weight of $v$, i.e. $v\in M_{\weight(v)}$.
Let $\bbN_0$ denote the non-negative integers,
and let $\leqslant$ denote the natural partial order on $\frak h^*$,
i.e. $\lambda\leqslant\mu$ if and only if $\lambda-\mu\in\bbN_0\roots_-$.

Given an anti-automorphism $\theta\colon\frak g\rightarrow \frak g$
of $\frak g$ we define the corresponding restricted duality $d$
on the category of weight $\frak g$-modules as follows. For a weight $\frak g$-module
$M$, let
\[
	dM\defeq \bigoplus_{\lambda\in\frak h^*}\Hom_{\bbC}\bigl(M_\lambda, \bbC\bigr),
\]
with the action of $\frak g$ given by
\[
	(xf)(m) \defeq f\bigl(\theta(x)m\bigr),
\]
for $x\in \frak g$, $f\in dM$ and $m\in M$.

We will use two different
restricted dualities on weight $\frak g$-modules:
the duality given by the anti-automorphism $\frak g\rightarrow \frak g$,
$x\mapsto -x$, which we will denote by $M^*$, and the duality given by
the Chevalley anti-automorphism, which we will denote by $M^\star$.
Note that $\Supp M^\star = \Supp M$, and thus $\star$ preserves
the category $\scr O$, whereas $\Supp M^*=-\Supp M$. `The dual of $M$',
`$M$ is self-dual' and similar statements will, unless otherwise stated,
refer to the $\star$-duality.

Since $\catO$ is not
closed under tensor products (e.g.\ the tensor product of two Verma modules
is never finitely generated and hence does not belong to $\catO$),
it would be convenient to define the
`enlarged' category $\tcatO$, as the
full subcategory of weight $\frak g$-modules $M$ having the properties
\begin{enumerate}
	\item[(OT1)] there are weights $\lambda_1$, $\dotsc$, $\lambda_k\in\frak h^*$
		with \[\Supp M\subseteq \bigcup_{i=1}^k\bigl(\lambda_i+\bbN_0\roots_-\bigr),\]
	\item[(OT2)] $\dim_\bbC M_\lambda<\infty$ for all $\lambda\in\frak h^*$.
\end{enumerate}

\begin{lemma}
	The category $\tcatO$ is closed under tensor products.
\end{lemma}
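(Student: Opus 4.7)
The plan is to verify the two defining conditions (OT1) and (OT2) directly for $M\otimes N$, starting from the corresponding data for $M,N\in\tcatO$. The first step is to observe that $M\otimes N$, equipped with the standard action $x(m\otimes n)=xm\otimes n+m\otimes xn$, is itself a weight module with weight-space decomposition
\[
    (M\otimes N)_\lambda \;=\; \bigoplus_{\mu+\nu=\lambda} M_\mu\otimes N_\nu.
\]
In particular $\Supp(M\otimes N)\subseteq\Supp M+\Supp N$. Once this is in place, (OT1) and (OT2) reduce to combinatorial statements about supports.

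Property (OT1) is immediate from the additivity of $\leqslant$. I would choose finite sets of weights $\{\lambda_i\}_{i=1}^k$ and $\{\mu_j\}_{j=1}^l$ witnessing (OT1) for $M$ and $N$ respectively. Any weight of $M\otimes N$ is of the form $\mu+\nu$ with $\mu\leqslant\lambda_i$ and $\nu\leqslant\mu_j$ for some $i,j$, whence $\mu+\nu\leqslant\lambda_i+\mu_j$. The finite set $\{\lambda_i+\mu_j\}_{i,j}$ then serves as a witness of (OT1) for $M\otimes N$.

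Property (OT2) is where I would spend most of the effort. For fixed $\lambda\in\frak h^*$, each summand $M_\mu\otimes N_{\lambda-\mu}$ has finite dimension by (OT2) for $M$ and $N$, so the task reduces to showing that only finitely many of the summands are non-zero. A non-zero summand requires $\mu\in\Supp M$ and $\lambda-\mu\in\Supp N$; by (OT1), this forces some pair $(i,j)$ for which the vectors $\lambda_i-\mu$ and $\mu_j-\lambda+\mu$ both lie in $\bbN_0\roots_+$ and sum to the fixed element $v_{i,j}\defeq\lambda_i+\mu_j-\lambda$.

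The main technical step, and the one I would call out as the principal obstacle, is then a finiteness statement: a fixed vector $v_{i,j}\in\frak h^*$ admits only finitely many decompositions as an ordered pair of elements of $\bbN_0\roots_+$. This I would handle by passing to the basis $\rbasis$ of simple roots, in which every positive root has non-negative integer coordinates of strictly positive total height: each coordinate of the summand $\lambda_i-\mu$ is then a non-negative integer bounded above by the corresponding coordinate of $v_{i,j}$, leaving only finitely many possibilities for $\mu$. Combined with the finiteness of the index set of pairs $(i,j)$, this bounds the number of non-zero summands in $(M\otimes N)_\lambda$ and establishes (OT2), completing the verification that $M\otimes N\in\tcatO$.
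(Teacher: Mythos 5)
Your proof is correct and follows essentially the same route as the paper: verify (OT1) and (OT2) directly from the weight-space decomposition of $M\otimes N$, the key point being that for fixed $\lambda$ only finitely many pairs $(\mu,\nu)\in\Supp M\times\Supp N$ satisfy $\mu+\nu=\lambda$. The only difference is that you spell out this finiteness via coordinates in the simple-root basis, a step the paper simply asserts as following from (OT1).
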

\begin{proof}
	Let $M, N\in\tcatO$. Then $M\otimes N$ is a weight module,
	and since
	\begin{equation}
		\label{eq:tensorsupp}
		\Supp (M\otimes N) = \Supp M + \Supp N,
	\end{equation}
	it is easy to see that
	the property (OT1) is preserved under
	tensor products. Also,
	\begin{equation}
		\label{eq:dimotimes}
		\dim(M\otimes N)_\lambda
		= \sum_{\mathclap{\substack{\mu\in\Supp M,\\\nu\in\Supp N,\\ \mu+\nu=\lambda}}}
			\dim M_\mu\cdot \dim N_\nu.
	\end{equation}
	By (OT1) the set of pairs
	$\mu\in\Supp M$, $\nu\in\Supp N$ with $\mu+\nu=\lambda$ is finite
	for any $\lambda\in\frak h^*$. By (OT2) we have that
	$\dim M_\mu<\infty$ and $\dim N_\nu<\infty$ for any $\mu$
	and $\nu$, so it follows that the right hand side of~\eqref{eq:dimotimes}
	is finite, i.e. $\dim(M\otimes N)_\lambda<\infty$.
\end{proof}

\begin{lemma}
	\label{lem:dualtens}
	The duality $\star$ commutes with tensor products in $\tcatO$, that is
	\[
		(M\otimes N)^\star\cong M^\star\otimes N^\star,
	\]
	natural in $M$ and $N$.
\end{lemma}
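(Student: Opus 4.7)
The plan is to write down the obvious candidate natural transformation and check it is an isomorphism componentwise. Define
\[
	\phi_{M,N}\colon M^\star\otimes N^\star\longrightarrow (M\otimes N)^\star
\]
by the formula $\phi_{M,N}(f\otimes g)(m\otimes n)\defeq f(m)g(n)$, extended bilinearly. First I would verify that $\phi_{M,N}$ actually lands in the restricted dual rather than the full $\bbC$-dual: if $f\in(M^\star)_\mu$ and $g\in(N^\star)_\nu$ then $\phi_{M,N}(f\otimes g)$ annihilates every weight space of $M\otimes N$ except possibly $(M\otimes N)_{\mu+\nu}$ (using $\theta(h)=h$ for $h\in\frak h$), so it belongs to the direct-sum dual defining $(M\otimes N)^\star$.

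Next I would check $\frak g$-equivariance. Since $\theta$ is an anti-auto\-morphism of $\frak g$ itself, $\theta(x)\in\frak g$ for $x\in\frak g$, and the coproduct in $\enva{\frak g}$ gives $\Delta(\theta(x))=\theta(x)\otimes 1+1\otimes\theta(x)$. A short computation then shows
\[
	\bigl(x\cdot\phi_{M,N}(f\otimes g)\bigr)(m\otimes n)
	= f(\theta(x)m)g(n)+f(m)g(\theta(x)n)
	= \phi_{M,N}\bigl(x\cdot(f\otimes g)\bigr)(m\otimes n),
\]
where on the right I used the standard $\frak g$-action $x\cdot(f\otimes g)=(xf)\otimes g+f\otimes (xg)$ on a tensor product of modules.

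The only real content is bijectivity, which I would establish weight space by weight space. On the $\lambda$ weight space,
\[
	(M^\star\otimes N^\star)_\lambda
	= \bigoplus_{\mu+\nu=\lambda}\Hom_\bbC(M_\mu,\bbC)\otimes\Hom_\bbC(N_\nu,\bbC),
\]
\[
	(M\otimes N)^\star_\lambda
	= \Hom_\bbC\Bigl(\bigoplus_{\mu+\nu=\lambda}M_\mu\otimes N_\nu,\bbC\Bigr)
	= \bigoplus_{\mu+\nu=\lambda}\Hom_\bbC(M_\mu\otimes N_\nu,\bbC),
\]
and $\phi_{M,N}$ is the direct sum of the canonical maps $\Hom_\bbC(M_\mu,\bbC)\otimes\Hom_\bbC(N_\nu,\bbC)\to\Hom_\bbC(M_\mu\otimes N_\nu,\bbC)$. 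Here the two defining properties of $\tcatO$ enter in complementary ways: by (OT1) applied to $M$ and $N$ there are only finitely many pairs $(\mu,\nu)$ with $\mu+\nu=\lambda$ and $M_\mu,N_\nu$ both nonzero, so the direct sums are actually finite, and by (OT2) each factor $M_\mu$, $N_\nu$ is finite-dimensional, which is exactly what is needed for the canonical map on each summand to be an isomorphism.

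Naturality in both arguments is immediate from the formula defining $\phi_{M,N}$, since any morphisms $M\to M'$, $N\to N'$ induce the expected $\Hom_\bbC$-level maps on each weight space. The main obstacle, such as it is, is merely bookkeeping: making sure (i) that $\phi_{M,N}$ really maps into the restricted dual (which needs $\theta|_{\frak h}=\id$), and (ii) that the signs and orders in the $\frak g$-action on tensor products and on $\star$-duals match up, both of which follow from the fact that $\theta$ is an anti-automorphism of $\frak g$ rather than a mere anti-automorphism of $\enva{\frak g}$.
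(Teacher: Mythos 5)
Your proposal is correct and follows essentially the same route as the paper: you write down the same canonical map $f\otimes g\mapsto\bigl(m\otimes n\mapsto f(m)g(n)\bigr)$ and deduce bijectivity from the finiteness conditions (OT1)--(OT2). The only cosmetic difference is that the paper verifies bijectivity by matching dual bases of weight vectors, whereas you decompose weight space by weight space and invoke the canonical isomorphism $\Hom_\bbC(M_\mu,\bbC)\otimes\Hom_\bbC(N_\nu,\bbC)\cong\Hom_\bbC(M_\mu\otimes N_\nu,\bbC)$ on each finite-dimensional summand; your explicit checks of equivariance and of landing in the restricted dual are the ``straightforward verification'' the paper leaves to the reader.
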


\begin{proof}
	For $f^\star\in M^\star$ and $g^\star\in N^\star$, let
	$\psi(f^\star\otimes g^\star)\in (M\otimes N)^\star$ be defined by
	\[
		\psi(f^\star\otimes g^\star)(m\otimes n) \defeq f^\star(m)g^\star(n),
	\]
	for $m\in M$ and $n\in N$,
	and extended bilinearly to a map $M^\star\otimes N^\star\rightarrow (M\otimes N)^\star$.
	Straightforward verification shows that this is a homomorphism, natural
	in both $M$ and $N$.
	Let $m_1, m_2, \dotsc\in M$ and $n_1, n_2, \dotsc\in N$ be bases of
	weight vectors, and let $m_1^\star, m_2^\star, \dotsc\in M^\star$
	and $n_1^\star, n_2^\star, \dotsc\in N^\star$ be the corresponding
	dual bases. Then we have that $\{\,m_i\otimes n_j\,\vert\,i, j=1, 2, \dotsc\,\}$
	is a basis of $M\otimes N$,
	with the dual basis $\{\,(m_i\otimes n_j)^\star\,\vert\,i, j=1, 2, \dotsc\,\}$. Furthermore,
	\begin{align*}
		\psi(m_i^\star\otimes n_j^\star)(m_k\otimes n_l)
		&= m_i^\star(m_k)n_j^\star(n_l) \\
		&= \delta_{ik}\delta_{jl} \\
		&= (m_i\otimes n_j)^\star(m_k\otimes n_l),
	\end{align*}
	i.e. $\psi(m_i^\star\otimes n_j^\star) = (m_i\otimes n_j)^\star$,
	so $\psi$ is indeed an isomorphism.
\end{proof}

Note that $\catO$ is the full subcategory of $\tcatO$
consisting of finitely generated modules, and in particular
the simple objects of $\tcatO$ and $\catO$ coincide.
For $\lambda\in\frak h^*$, let $L(\lambda)$ denote
the simple highest weight module with highest weight $\lambda$,
and let $P(\lambda)$ denote the projective cover of $L(\lambda)$.

\begin{lemma}
	\label{lem:findecomp}
	All modules $M\in\tcatO$ admit a (possibly infinite)
	composition series. Furthermore, for each $\lambda\in\frak h^*$,
	the number $[M: L(\lambda)]$ of occurrences of $L(\lambda)$
	as a composition factor in a composition series
	is finite and independent of the
	choice of composition series.
\end{lemma}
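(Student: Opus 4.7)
The plan is to construct a composition series by exhausting $M$ with finitely generated submodules lying in $\catO$, and then to read off the multiplicities $[M:L(\lambda)]$ from the weight space dimensions of $M$ so that uniqueness becomes automatic.

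For existence, I would first observe that $\Supp M$ is countable (a finite union of countable cones by (OT1)) and that for each $\mu \in \frak h^*$ the upper set $\{\lambda \in \Supp M : \lambda \geqslant \mu\}$ is finite, again by (OT1). From these two facts I would produce an enumeration $\Supp M = \{\mu_1, \mu_2, \dotsc\}$ with the property that $\mu_i > \mu_j$ implies $i < j$: starting from any enumeration $\nu_1, \nu_2, \dotsc$ of $\Supp M$, successively list the finite blocks $\{\lambda \in \Supp M : \lambda \geqslant \nu_k\} \setminus \bigcup_{j < k}\{\lambda \in \Supp M : \lambda \geqslant \nu_j\}$, each internally in some linear extension of the partial order. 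Setting $M^{(n)} \defeq \sum_{i \leq n} \enva{\frak g} M_{\mu_i}$, each $M^{(n)}$ is finitely generated over $\enva{\frak g}$ by (OT2); since (OT1) and (OT2) already force local $\frak n_+$-finiteness (the $\enva{\frak n_+}$-orbit of a weight vector lies in a finite union of finite-dimensional weight spaces of $M$), this places $M^{(n)}$ in $\catO$. By construction $M = \bigcup_n M^{(n)}$, each $M^{(n)}$ has a finite composition series by the classical theory of $\catO$, and Schreier refinement lets me choose these series compatibly so that their union is a composition series of $M$, indexed by $\bbN$.

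For finiteness and independence, fix a composition series $0 = M_0 \subset M_1 \subset M_2 \subset \dotsb$ and $\lambda \in \Supp M$. From the identity $\dim M_\mu = \sum_i \dim(M_i/M_{i-1})_\mu$ and the finiteness of $\dim M_\mu$, only finitely many summands can be nonzero; taking $\mu = \lambda$ and using $\dim L(\lambda)_\lambda = 1$ shows $[M:L(\lambda)] \leq \dim M_\lambda < \infty$. More generally, for every $\mu \in \Supp M$,
\[
	\dim M_\mu = \sum_{\lambda \in \Supp M,\ \lambda \geqslant \mu} [M:L(\lambda)] \dim L(\lambda)_\mu,
\]
which is a finite sum by (OT1). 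Viewed as $\mu$ varies, this is a triangular linear system with unit diagonal (since $\dim L(\lambda)_\lambda = 1$), so the multiplicities $[M:L(\lambda)]$ are uniquely determined by the intrinsic data $\{\dim M_\mu\}_{\mu \in \frak h^*}$ and therefore cannot depend on the chosen composition series.

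The main technical obstacle is the bookkeeping in the first step: arranging an enumeration of $\Supp M$ compatible with the partial order so that each $M^{(n)}$ has support bounded above by finitely many weights of $M$ and hence lies in $\catO$. Once the enumeration is in hand, everything else reduces to standard facts about $\catO$ and elementary linear algebra on the triangular matrix $(\dim L(\lambda)_\mu)$.
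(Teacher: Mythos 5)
Your argument is correct. The existence half is essentially the paper's own proof: you exhaust $M$ by finitely generated submodules built from weight vectors enumerated so that higher weights come first (the paper orders a weight basis with the same property and takes the submodules generated by initial segments), observe these lie in $\catO$, and refine the resulting chain step by step to a composition series; your extra care about the enumeration and about local $\frak n_+$-finiteness only fills in what the paper leaves as ``such a basis exists due to (OT1) and (OT2)''. The finiteness-and-independence half, however, takes a genuinely different route. The paper fixes $\lambda$, lets $N$ be the submodule generated by the finite-dimensional space $M_\lambda$, notes $N$ lies inside some term $M^{(k)}$ of the given series and that $(M^{(i)}/N)_\lambda=0$ for $i\geq k$, and concludes $[M:L(\lambda)]=[N:L(\lambda)]$, an intrinsic finite quantity computed in the finite-length module $N\in\catO$. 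You instead bound $[M:L(\lambda)]\leq\dim M_\lambda$ directly and then invoke the character identity $\dim M_\mu=\sum_{\lambda\geqslant\mu}[M:L(\lambda)]\dim L(\lambda)_\mu$, which by (OT1) is a finite, unitriangular system, so the multiplicities are determined by the weight-space dimensions of $M$ alone. The paper's reduction to $N$ is shorter and stays entirely module-theoretic, while your character argument is slightly more computational but yields a bit more: an explicit recursion for the multiplicities and the statement that they depend only on the formal character of $M$, not just on $M$ up to the choice of series.
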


\begin{proof}
	Let $M\in\tcatO$, and let $m_1$, $m_2$, $m_3$, $\dotsc$, $\in M$
	be a basis of weight vectors such that $\weight(m_i)\leqslant\weight(m_j)$
	implies that $j\leq i$. Such a basis exists due to
	(OT1) and (OT2). For $i\in\bbN_0$, let $M^{(i)}$ denote the
	submodule of $M$ generated by $\set{\,m_j\,\vert\,j\leq i\,}$.
	We thus obtain a series of finitely generated modules
	\[
		\set{0} = M^{(0)} \subseteq M^{(1)}\subseteq M^{(2)}\subseteq M^{(3)}\subseteq\cdots,
	\]
	which, since the $m_i$:s constitute a basis of $M$, converge to $M$, i.e.
	\[
		\bigcup_{i=0}^\infty M^{(i)} = M.
	\]
	Since the $M^{(i)}$:s are finitely generated,
	$M^{(i)}\in\catO$ for all $i\in\bbN_0$. Thus, since all objects in $\catO$
	have finite length, this series can be refined to a composition series.

	Now, consider any composition series $(M^{(i)})$ of $M$,
	let $\lambda\in\frak h^*$ be any weight of $M$, and
	let $N$ denote the submodule of $M$ generated by the weight space $M_\lambda$.
	Since $\dim M_\lambda<\infty$ there exists an index $k\in\bbN$ such that
	$M_\lambda\subseteq M^{(k)}$, and in particular such that $N$ is a
	submodule of $M^{(k)}$.
	Then $\bigl(M^{(i)}/N\bigr)_\lambda=\set{0}$ for all $i\geq k$, so
	\[
		\bigl[(M^{(i)}/N): L(\lambda)\bigr] = 0
	\]
	for all $i\geq k$, and thus
	\[
		[M^{(i)}: L(\lambda)] = [N: L(\lambda)]
	\]
	for all $i\geq k$. As $N\in\catO$, we get that
	$[M: L(\lambda)] = [N: L(\lambda)]$ is finite and independent
	of the choice of composition series.
\end{proof}

Recall that $\catO$ has a block decomposition
\[
	\catO = \bigoplus_{\mathclap{\chi\in \cntr{\frak g}^*}}\catO_\chi,
\]
where $\cntr{\frak g}$ denotes the centre of $\frak g$ and $\catO_\chi$
denotes the full subcategory of $\catO$ consisting of modules $M$
such that for all $z\in \cntr{\frak g}$, $M$ is annihilated by some
power of $\bigl(z-\chi(z)\bigr)$. Hence, each module $M\in\catO$
decomposes into direct sum
\begin{equation}
	\label{eq:odec}
	M = \bigoplus_{\mathclap{\chi\in \cntr{\frak g}^*}}M_\chi,
\end{equation}
where $M_\chi\in\catO_\chi$ and $M_\chi\neq\set{0}$ for only finitely
many $\chi$.

From Lemma~\ref{lem:findecomp} it follows that we get a similar block decomposition
for $\tcatO$, where each module $M\in\tcatO$ decomposes as in~\eqref{eq:odec},
but with possibly countably many non-zero summands (and with some restrictions
on the weight spaces of the non-zero summands). This is similar to
the situation for $\catO$-like categories over a Kac-Moody algebra,
see for example~\cite{neidhardt1, rc-w}. More precisely, we have
the following.

\begin{lemma}
	For all $M\in\tcatO$ and all $\chi\in\cntr{\frak g}^*$ there are
	unique modules (up to isomorphism) $M_1\in\catO_\chi$, $M_2\in\tcatO$, with
	$[M_2:L(\mu)]=0$ for all $\mu\in\frak h^*$ with $L(\mu)\in\catO_\chi$,
	such that
	\[
		M \cong M_1\oplus M_2.
	\]
\end{lemma}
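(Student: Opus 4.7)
The plan is to bootstrap the known block decomposition on $\catO$ to all of $\tcatO$ using the ascending exhaustion from Lemma~\ref{lem:findecomp}. First, I take the chain $\set{0}=M^{(0)}\subseteq M^{(1)}\subseteq\cdots$ of finitely generated submodules of $M$ constructed in that proof, whose union is all of $M$. Each $M^{(i)}\in\catO$ decomposes canonically as $M^{(i)}=M^{(i)}_\chi\oplus N^{(i)}$, where $N^{(i)}\defeq\bigoplus_{\chi'\neq\chi}M^{(i)}_{\chi'}$. Since central characters are an intrinsic invariant, $A_\chi=A\cap B_\chi$ for any inclusion $A\hookrightarrow B$ in $\catO$; applied to $M^{(i)}\hookrightarrow M^{(i+1)}$, this forces $M^{(i)}_\chi\subseteq M^{(i+1)}_\chi$ and $N^{(i)}\subseteq N^{(i+1)}$, so the two sub-chains are genuine ascending chains of submodules of $M$.

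Next I would argue that the chain $(M^{(i)}_\chi)_{i\geq 0}$ in fact stabilizes. Its lengths form a non-decreasing sequence of non-negative integers, bounded above by $\sum_\mu[M:L(\mu)]$, where the sum runs over those $\mu$ with $L(\mu)\in\catO_\chi$: there are only finitely many such $\mu$ (a single orbit under the dot action of the Weyl group, by the Harish-Chandra theorem), and each summand is finite by Lemma~\ref{lem:findecomp}. Hence some $N$ satisfies $M^{(i)}_\chi=M^{(N)}_\chi$ for all $i\geq N$. I then set $M_1\defeq M^{(N)}_\chi\in\catO_\chi$ and $M_2\defeq\bigcup_{i\geq 0}N^{(i)}$, so $M_2$ is a submodule of $M\in\tcatO$ and hence lies in $\tcatO$. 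Passing to the union in $M^{(i)}=M^{(i)}_\chi\oplus N^{(i)}$ yields $M=M_1\oplus M_2$, and by construction $[M_2:L(\mu)]=[N^{(i)}:L(\mu)]=0$ for every $\mu\in\frak h^*$ with $L(\mu)\in\catO_\chi$.

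For uniqueness, given two decompositions $M=M_1\oplus M_2=M_1'\oplus M_2'$ of the stated form, I would consider the homomorphism $M_1\rightarrow M_2'$ obtained by composing the inclusion $M_1\hookrightarrow M$ with the projection $M\twoheadrightarrow M_2'$. Its image is simultaneously a quotient of $M_1\in\catO_\chi$ (so all its composition factors lie in $\catO_\chi$) and a submodule of $M_2'$ (so none of them do), and hence must be zero. Therefore $M_1\subseteq M_1'$, and symmetry yields $M_1=M_1'$ and then $M_2=M_2'$. The one real technical point, and thus the main obstacle, is verifying that $M_1$ genuinely lies in $\catO$ rather than only in $\tcatO$; this is precisely where the finite multiplicities asserted by Lemma~\ref{lem:findecomp} are indispensable, as otherwise the chain of $\chi$-blocks could conceivably fail to terminate.
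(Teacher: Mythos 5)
Your proof is correct, but it reaches the decomposition by a genuinely different route than the paper. The paper defines $M_1$ and $M_2$ directly as sums of traces of projectives, namely $M_1=\sum\Tr_{P(\lambda)}M$ over those $\lambda$ with $P(\lambda)\in\catO_\chi$ and $M_2$ the analogous sum over the remaining $\lambda$; the exhaustion of $M$ by the finitely generated submodules from the proof of Lemma~\ref{lem:findecomp}, together with the fact that $\catO$ has enough projectives, gives $M=M_1+M_2$, and the sum is direct by central character considerations. You instead decompose each term $M^{(i)}$ of that exhaustion inside $\catO$ into its $\chi$-block and the sum of the other blocks, observe that both families form ascending chains of submodules of $M$, and use the finiteness of the multiplicities $[M:L(\mu)]$ for the finitely many $\mu$ in the dot-orbit attached to $\chi$ to force the chain of $\chi$-blocks to stabilize; the stable block is your $M_1$ and the union of the complements is your $M_2$. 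Both arguments ultimately rest on Lemma~\ref{lem:findecomp}: the paper needs it (implicitly) to ensure $M_1$ has finite length and hence lies in $\catO_\chi$, while you need it to bound lengths and obtain stabilization. What the trace construction buys is brevity and a canonical, visibly functorial choice of $M_1$ and $M_2$, which is what underlies the exact projection functors $\lowbar\restrict_\chi$ introduced immediately afterwards; what your argument buys is that it avoids projective covers entirely, using only the internal block decomposition of $\catO$, and it supplies an explicit uniqueness argument (vanishing of any homomorphism $M_1\rightarrow M_2'$), which the statement asserts but the paper's proof does not address. Two details worth spelling out: the vanishing $[M_2:L(\mu)]=0$ follows because the submodule of $M_2$ generated by its finite dimensional $\mu$-weight space lies in some $N^{(i)}$, exactly as multiplicities are computed in the proof of Lemma~\ref{lem:findecomp}; and in your closing remark, the membership $M_1\in\catO_\chi$ is automatic in your construction (it is a block of $M^{(N)}\in\catO$), so the finite multiplicities are really needed for the stabilization step rather than for that membership.
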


\begin{proof}
	Recall that, for two $\frak g$-modules $K$ and $N$, the
	trace $\Tr_KN$ is defined as the sum of images of all
	homomorphisms from $K$ to $N$. Now, let
	\[
		M_1 \defeq \sum_{\mathclap{\substack{\lambda\in\frak h^*,\\ P(\lambda)\in\catO_\chi}}}
			\Tr_{P(\lambda)}M,
	\]
	and
	\[
		M_2 \defeq
			\sum_{\mathclap{\substack{\lambda\in\frak h^*,\\ P(\lambda)\not\in\catO_\chi}}}
			\Tr_{P(\lambda)}M.
	\]
	As $\catO$ has enough projectives,
	from the proof of Lemma~\ref{lem:findecomp} it follows that
	$M = M_1 + M_2$. Since the central characters occuring in
	$M_2$ are different from $\chi$, this sum must be direct.
\end{proof}

For each $\chi\in \cntr{\frak g}^*$ we thus obtain an exact projection functor
$\lowbar\restrict_\chi\colon\tcatO\rightarrow\catO_\chi$, such that
\begin{equation}
	\label{eq:otdec}
	M = \bigoplus_{\chi\in \cntr{\frak g}^*}M\restrict_\chi
\end{equation}
for any $M\in\tcatO$.

\begin{lemma}
	The tensor product commutes with infinite direct sums in $\tcatO$.
\end{lemma}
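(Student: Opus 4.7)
The plan is to reduce the statement to the fact that tensor products of $\bbC$-vector spaces commute with arbitrary direct sums, and then to verify that the resulting natural isomorphism respects the $\frak g$-action and that both sides lie in $\tcatO$.

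First I would fix $M\in\tcatO$ and a family $\{N_i\}_{i\in I}$ in $\tcatO$ whose direct sum $N\defeq\bigoplus_{i\in I}N_i$ is again in $\tcatO$ (i.e.\ satisfies (OT1) and (OT2)). The canonical $\bbC$-linear map
\[
\phi\colon M\otimes N\longrightarrow \bigoplus_{i\in I}\bigl(M\otimes N_i\bigr), \qquad m\otimes(n_i)_{i\in I}\longmapsto(m\otimes n_i)_{i\in I},
\]
is an isomorphism of vector spaces by a standard linear-algebra argument, its inverse being induced by the maps $M\otimes N_i\to M\otimes N$ coming from the inclusions $N_i\hookrightarrow N$. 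Since each such inclusion is a homomorphism of $\frak g$-modules and the $\frak g$-action on a tensor product is diagonal, $\phi$ is automatically $\frak g$-equivariant.

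Next I would verify that $\bigoplus_{i\in I}(M\otimes N_i)$ really belongs to $\tcatO$; this is automatic from the existence of the $\frak g$-isomorphism $\phi$, together with the fact, already established in the first lemma of this section, that $M\otimes N\in\tcatO$. A direct verification is also easy: (OT1) follows from $\Supp\bigl(\bigoplus_i(M\otimes N_i)\bigr)=\Supp M+\Supp N$, and (OT2) from the identification
\[
\Bigl(\bigoplus_{i\in I}(M\otimes N_i)\Bigr)_\lambda\;\cong\;\bigoplus_{\mu+\nu=\lambda}M_\mu\otimes N_\nu,
\]
a finite sum of finite-dimensional spaces, the finiteness of the sum coming from (OT1) for $M$ and $N$ and the finite-dimensionality from (OT2).

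No step is genuinely difficult: the content of the lemma is that the hypothesis ``$\bigoplus_i N_i\in\tcatO$''---a genuine restriction, since arbitrary direct sums in $\tcatO$ need not satisfy (OT1) or (OT2)---is already enough to guarantee that the vector-space isomorphism $\phi$ descends to an isomorphism in $\tcatO$. The only mild subtlety, and therefore the closest thing to an obstacle, is keeping track of this hypothesis on the direct sum, so that one does not accidentally claim an isomorphism between objects outside the category.
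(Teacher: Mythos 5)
Your proof is correct and follows essentially the same route as the paper: both reduce the claim to the fact that tensor products over $\bbC$ commute with direct sums, the paper doing this by exhibiting a common weight basis of the two sides and you by checking that the canonical linear map is $\frak g$-equivariant and lands in $\tcatO$. Your version is, if anything, slightly more careful about equivariance and category membership, but the underlying argument is the same.
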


\begin{proof}
	Let $N$, $M_1$, $M_2$, $\dotsc\in\tcatO$ with
	\[
		\bigoplus_{i=1}^\infty M_i\in\tcatO,
	\]
	let $n_1$, $n_2$, $\dots\in N$ be a basis of $N$ and let
	$m_1^{(i)}$, $m_2^{(i)}$, $\dots\in M_i$ be a basis of $M_i$
	for each $i\in\bbN$. Then it is immediate that
	\[
		\bigl\{\,m_j^{(i)}\otimes n_k\,\big\vert\,i, j, k\in\bbN\,\bigr\}
	\]
	constitute a basis of both
	\[
		\bigl(M_1\oplus M_2\oplus \dotsb\bigr)\otimes N
	\]
	and
	\[
		(M_1\otimes N)\oplus (M_2\otimes N)\oplus\dotsb,
	\]
	giving the required isomorphism.
\end{proof}

For $\lambda\in\frak h^*$, we denote by $\Delta(\lambda)$ the corresponding
Verma module with highest weight $\lambda$,
and $\nabla(\lambda)\defeq\Delta(\lambda)^\star$ the corresponding dual
Verma module. Let $\calF(\Delta)$ and $\calF(\nabla)$ denote the categories
of modules $M\in\catO$ having a Verma filtration
and dual Verma filtration, respectively,
and let $\calT=\calF(\Delta)\cap\calF(\nabla)$ denote the category of
tilting modules (see~\cite{ringel} for more details).
Let $\tcalF(\Delta)$, $\tcalF(\nabla)$ and $\tcalT$
denote the corresponding categories for $\tcatO$.
As $\star$ commutes with direct sums, the decomposition~\eqref{eq:otdec}
implies that $M\in\tcalF(\Delta)$ if and only if $M^\star\in\tcalF(\nabla)$.
Note also that $\calF(\Delta)$ and $\tcalF(\Delta)$ can be
characterised as the objects in $\catO$ and $\tcatO$
respectively which are free as $\enva{\frak n_-}$-modules.

Similar to
the situation in $\catO$, we have the following result for $\tcatO$
concerning tensor products involving (dual) Verma modules and tilting modules.

\begin{proposition}
	\label{prop:preservestilt}
	For any $M\in\tcatO$, $N\in\tcalF(\Delta)$, $K\in\tcalF(\nabla)$
	and $T\in\tcalT$
	we have $M\otimes N\in\tcalF(\Delta)$, $M\otimes K\in\tcalF(\nabla)$
	and $M\otimes T\in\tcalT$.
\end{proposition}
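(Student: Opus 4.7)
My approach would be to use the characterisation noted immediately before the proposition, that $\tcalF(\Delta)$ consists exactly of the objects of $\tcatO$ that are free as $\enva{\frak n_-}$-modules. Since the first lemma of this section already gives $M\otimes N\in\tcatO$, the $\Delta$-case reduces to the purely Hopf-algebraic statement that, for any $\enva{\frak n_-}$-module $M$ and any free $\enva{\frak n_-}$-module $N$, the tensor product $M\otimes N$ (taken with the coproduct action $x\cdot(m\otimes n)=xm\otimes n+m\otimes xn$) is again free over $\enva{\frak n_-}$.

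To establish this I would pick a $\enva{\frak n_-}$-linear isomorphism $N\cong\bigoplus_j\enva{\frak n_-}$ and, using that the tensor product commutes with direct sums, rewrite $M\otimes N\cong\bigoplus_j(M\otimes\enva{\frak n_-})$ as $\enva{\frak n_-}$-modules. It then suffices to show that each summand $M\otimes\enva{\frak n_-}$ is free over $\enva{\frak n_-}$; for this I would invoke the standard Hopf twist
\[
	\phi\colon M\otimes\enva{\frak n_-}\rightarrow M\otimes\enva{\frak n_-},
	\qquad m\otimes u\mapsto\sum S(u_{(1)})m\otimes u_{(2)},
\]
written in Sweedler notation, with $S$ the antipode of $\enva{\frak n_-}$. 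The map $\phi$ is an $\enva{\frak n_-}$-linear isomorphism from the coproduct action on the source to the action on the target where $\enva{\frak n_-}$ acts only on the second tensor factor by left multiplication; its inverse is $m\otimes u\mapsto\sum u_{(1)}m\otimes u_{(2)}$. Since the target is visibly free over $\enva{\frak n_-}$ on any vector-space basis of $M\otimes 1$, so is the source, and hence so is $M\otimes N$.

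The $\nabla$-case would then follow by duality. Lemma~\ref{lem:dualtens} gives $(M\otimes K)^\star\cong M^\star\otimes K^\star$, and $\star$ preserves $\tcatO$ (since it preserves supports and weight-space dimensions) and interchanges $\tcalF(\Delta)$ with $\tcalF(\nabla)$. So if $K\in\tcalF(\nabla)$ then $K^\star\in\tcalF(\Delta)$, the already-established $\Delta$-case yields $M^\star\otimes K^\star\in\tcalF(\Delta)$, and applying $\star$ again gives $M\otimes K\in\tcalF(\nabla)$. The tilting case is then immediate by intersection, using $\tcalT=\tcalF(\Delta)\cap\tcalF(\nabla)$.

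The step I expect to need the most care is the verification that $\phi$ is $\enva{\frak n_-}$-linear when $M$ is infinite dimensional. However, for each fixed $u$ the coproduct $\Delta(u)$ is a finite sum by PBW, so $\phi(m\otimes u)$ is pointwise well-defined for every $m$; the intertwining property then reduces to the standard antipode identity $\sum S(u_{(1)})u_{(2)}=\epsilon(u)\cdot 1$ together with the antihomomorphism property of $S$, and the size of $M$ never enters. So I do not anticipate any genuine obstacle beyond careful bookkeeping.
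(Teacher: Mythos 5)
Your proof is correct, but the key step is handled by a genuinely different argument than the paper's. For the freeness of $M\otimes N$ over $\enva{\frak n_-}$ the paper argues by hand: it first reduces to $N\in\calF(\Delta)$ (finite $\enva{\frak n_-}$-rank), orders a weight basis $m_1,m_2,\dotsc$ of $M$ compatibly with the partial order on weights, shows by induction on PBW degree that the elements $m_i\otimes v_j$ generate, and then proves freeness via an exhaustion of $M\otimes N$ by the submodules $L_l$ and the resulting direct sum decomposition $\bigoplus_l\bar L_l$. You instead invoke the Hopf-algebraic tensor identity: decompose $N\cong\bigoplus_j\enva{\frak n_-}$, commute the tensor product past the direct sum, and untwist each summand $M\otimes\enva{\frak n_-}$ by $m\otimes u\mapsto\sum S(u_{(1)})m\otimes u_{(2)}$, which is a perfectly valid and in fact more standard route; it needs no reduction to finite rank and no ordering of the basis of $M$, and since $\phi^{-1}(m\otimes 1)=m\otimes 1$ it even recovers the same free generators $m_i\otimes v_j$ that the paper exhibits. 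What the paper's longer argument buys is precisely that explicit basis and the ordering-based filtration, which are cited again later (in the proof of Proposition~\ref{prop:fpreservesdelta}, in the faithfulness lemma, and in Section~\ref{sec:comonad}, where generation of $\Delta(0)\otimes M$ by the vectors $v\otimes m$ is taken from this proof); with your argument one would want to record explicitly that $\set{m_i\otimes v_j}$ is an $\enva{\frak n_-}$-basis so those later references still have something to point to. The $\nabla$- and tilting cases in your write-up coincide with the paper's (duality via Lemma~\ref{lem:dualtens} and the fact that $\star$ swaps $\tcalF(\Delta)$ with $\tcalF(\nabla)$, then intersection).

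One small imprecision: the verification that $\phi$ intertwines the diagonal action with the right-factor action does not follow from the antipode axiom and the antihomomorphism property of $S$ alone; the computation produces the term $\sum S(u_{(2)})u_{(1)}\otimes u_{(3)}$, and collapsing it to $1\otimes u$ uses cocommutativity of $\enva{\frak n_-}$ (or an equivalent adjustment of the formula). Since enveloping algebras are cocommutative this is harmless here, but it should be said.
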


\begin{proof}
	To show $M\otimes N\in\tcalF(\Delta)$, it suffices to show that
	$M\otimes N\in\tcalF(\Delta)$ for any $N\in\calF(\Delta)$,
	since the general case then follows from the fact that
	any module in $\tcalF(\Delta)$
	decomposes into a direct sum of modules in $\calF(\Delta)$.
	Let $m_1$, $m_2$, $\dotsc$ $\in M$ be a basis of $M$
	constructed as in the proof of Lemma~\ref{lem:findecomp}
	and let $v_1, \dotsc, v_k\in N$ be a basis of $N$ as a $\enva{\frak n_-}$-module
	consisting of weight vectors. We will now show that
	$M\otimes N$ is $\enva{\frak n_-}$-free
 	with the basis $B\defeq\{\,m_i\otimes v_j\,\vert\,i \in\bbN, 1\leq j\leq k\,\}$.
	
 	We start by showing that $B$ generates $M\otimes N$ as a
	$\enva{\frak n_-}$-module. A set that certainly generates $M\otimes N$
	over $\enva{\frak n_-}$ is
	\[
		\bar B\defeq
		\bigl\{\, m_i\otimes(uv_j)\,\big\vert\,i\in\bbN, u\in\enva{\frak n_-},
			1\leq j\leq k\,\bigr\},
	\]
	since $\{m_1, m_2, \dotsc\}$ is a basis of $M$ and
	\[
		\sum_{j=1}^k\{\,uv_j\,\vert\,u\in\enva{\frak n_-}\,\}=N.
	\]
	We will show that $\bar B$ is a subset of the set generated by $B$
	by induction on the degree of $u$.
	So, consider an element $m_i\otimes(uv_j)\in \bar B$. If $u$ has degree $0$,
	then $u$ is a scalar,
	so $m_i\otimes(uv_j)=u(m_i\otimes v_j)$ is in the set generated by $B$.
	Now assume $u$ has degree $d\geq 1$. Then
	\[
		m_i\otimes(uv_j) = u(m_i\otimes v_j) + \sum_{l}(u'_lm_i)\otimes(u''_lv_j),
	\]
 	for some elements elements $u'_l, u''_l\in\enva{\frak n_-}$ with degree strictly
	less than $d$. Since we can rewrite the
	elements $u'_lm_i$ as linear combinations of
	$m_1, m_2, \dotsc$,
	the right hand side is in the set generated by $B$ over $\enva{\frak n_-}$ by the induction
	hypothesis. Hence $B$ generates $\bar B$ as a $\enva{\frak n_-}$-module,
	so $B$ generates $M\otimes N$ as a $\enva{\frak n_-}$-module.
	
	To see that $M\otimes N$ is free over $B$ as a
	$\enva{\frak n_-}$-module, let $L_l$ denote the
	$\enva{\frak n_-}$-submodule of $M\otimes N$ generated by
	\[
		\bigl\{\,m_i\otimes v_j\,\big\vert\,i\in\bbN, i\leq l, 1\leq j\leq k\,\bigr\},
	\]
	and let $\bar L_l$ denote the $\enva{\frak n_-}$-submodule of $M\otimes N$
	generated by
	\[
		\bigl\{\,m_l\otimes v_j\,\vert\,1\leq j\leq k\,\bigr\}.
	\]
	By straightforward induction we see that any non-zero element in $L_l$ has a summand
	of the form $m_i\otimes n$ for some $1\leq i\leq k$, $n\in N$.
	On the other hand, no element of $\bar L_{l+1}$ has such a summand by the ordering of the
	$m_i$:s, and hence we have
	\[
		L_{l+1} = \bar L_{l+1}\oplus L_l.
	\]
	Thus
	\[
		M\otimes N = \bigoplus_{l=1}^\infty \bar L_l
	\]
	as a $\enva{\frak n_-}$-module. Finally, we note that $\bar L_l$ is $\enva{\frak n_-}$-free
 	with the generators
	\[
		\bigl\{\,m_l\otimes v_j\,\vert\,1\leq j\leq k\,\bigr\},
	\]
	since $u(m_l\otimes v_j)$ has a summand of the form
	$m_l\otimes (uv_j)$ for all $u\in\enva{\frak n_-}$. Hence
	$M\otimes N$ is $\enva{\frak n_-}$-free, i.e. $M\otimes N\in\tcalF(\Delta)$.
	
	To show that $M\otimes K\in\tcalF(\nabla)$, note that since
	$K^\star\in\tcalF(\Delta)$, by the previous paragraph we have
	$M^\star\otimes K^\star\in\tcalF(\Delta)$. By
	Lemma~\ref{lem:dualtens}, $\star$
	commutes with tensor products, i.e. $M^\star\otimes K^\star=(M\otimes K)^\star$,
 	and hence $M\otimes K\in\tcalF(\nabla)$.
	
	Finally, since $\tcalT=\tcalF(\Delta)\cap\tcalF(\nabla)$, from the
	first two statements
	it follows that $M\otimes T\in\tcalT$ for all $M\in\tcatO$ and $T\in\tcalT$.
\end{proof}

\begin{corollary}
	For $M\in\tcalF(\Delta)$ and $N\in\tcalF(\nabla)$ we have
	$M\otimes N\in\tcalT$. Furthermore, if $\lambda_1, \lambda_2, \dotsc\in\frak h^*$
	and $\mu_1, \mu_2, \dotsc\in\frak h^*$ are the highest weights, with
	multiplicities,
	of the Verma (respectively dual Verma) modules occurring
	in the Verma and dual Verma filtrations of $M$ and $N$, then
	\[
		M\otimes N \cong \bigoplus_{i, j=1}^\infty\Delta(\lambda_i)\otimes \nabla(\mu_j).
	\]
\end{corollary}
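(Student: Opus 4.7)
The first assertion $M\otimes N\in\tcalT$ is immediate from Proposition~\ref{prop:preservestilt} applied twice. Since $M\in\tcatO$ and $N\in\tcalF(\nabla)$, the proposition yields $M\otimes N\in\tcalF(\nabla)$. On the other hand, using the standard flip isomorphism $M\otimes N\cong N\otimes M$ of $\frak g$-modules, we may apply the proposition with $N\in\tcatO$ and $M\in\tcalF(\Delta)$ to conclude $M\otimes N\in\tcalF(\Delta)$. Hence $M\otimes N\in\tcalF(\Delta)\cap\tcalF(\nabla)=\tcalT$.

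For the explicit decomposition I would first reduce to finite filtrations. Write $M=\bigoplus_p M^{(p)}$ with each $M^{(p)}\in\calF(\Delta)$ and $N=\bigoplus_q N^{(q)}$ with each $N^{(q)}\in\calF(\nabla)$; such decompositions exist by the remark in the proof of Proposition~\ref{prop:preservestilt} that $\tcalF(\Delta)$-modules split as countable direct sums of $\calF(\Delta)$-modules, together with the $\star$-duality $\tcalF(\nabla)\leftrightarrow\tcalF(\Delta)$. Since $\otimes$ distributes over direct sums in $\tcatO$, $M\otimes N\cong\bigoplus_{p,q}M^{(p)}\otimes N^{(q)}$, so it suffices to handle each $M^{(p)}\otimes N^{(q)}$ separately and then reindex the resulting sum.

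Once $M\in\calF(\Delta)$ and $N\in\calF(\nabla)$ have finite filtrations of lengths $r$ and $s$, I would proceed by induction on $r+s$ using a short exact sequence $0\to\Delta(\lambda_1)\to M\to M'\to 0$ with $M'$ of Verma-length $r-1$. Tensoring with $N$ (which is exact) yields
\[
0\to\Delta(\lambda_1)\otimes N\to M\otimes N\to M'\otimes N\to 0.
\]
If this sequence splits, the inductive hypothesis applied to $M'\otimes N$, together with a symmetric induction on $s$ handling the base case $r=1$ by the same method applied to $N$, combines to give the claimed direct sum decomposition.

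The main obstacle is establishing that the above sequence splits, i.e.\ that $\mathrm{Ext}^1_{\tcatO}(M'\otimes N,\Delta(\lambda_1)\otimes N)=0$. By the first part of the corollary and Proposition~\ref{prop:preservestilt} we have $M'\otimes N\in\tcalF(\Delta)$ and $\Delta(\lambda_1)\otimes N\in\tcalF(\nabla)$, so the required vanishing is the extension to $\tcatO$ of the classical fact $\mathrm{Ext}^1_{\catO}(\calF(\Delta),\calF(\nabla))=0$. I would establish it by further reducing via direct sums to the case where the first argument lies in $\calF(\Delta)\subseteq\catO$, taking a projective resolution in $\catO$ by finitely generated indecomposable projectives $P(\nu)$, and observing that each $P(\nu)$ remains projective when regarded as an object of $\tcatO$: a map from the finitely generated $P(\nu)$ into an object of $\tcatO$ has finitely generated image, so the lifting problem along a surjection in $\tcatO$ reduces to the classical one in $\catO$. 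Consequently $\mathrm{Ext}^1_{\tcatO}$ agrees with $\mathrm{Ext}^1_{\catO}$ here and vanishes.
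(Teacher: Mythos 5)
Your proof is correct, and structurally it arrives at the same point as the paper: the filtration of $M\otimes N$ induced by the two given filtrations (exactness of $\otimes_\bbC$) has subquotients $\Delta(\lambda_i)\otimes\nabla(\mu_j)$, and the vanishing of $\mathrm{Ext}^1$ between $\Delta$-filtered and $\nabla$-filtered objects forces it to split. The difference is in how that vanishing is justified. The paper simply cites Ringel's result that tilting modules have no self-extensions and leaves the passage from $\catO$ to $\tcatO$ (infinitely many summands, non--finitely generated terms) implicit, whereas you reduce to finite filtrations via the direct-sum decompositions of objects of $\tcalF(\Delta)$ and $\tcalF(\nabla)$, induct on filtration lengths, and prove $\mathrm{Ext}^1_{\tcatO}\bigl(\tcalF(\Delta),\tcalF(\nabla)\bigr)=0$ from scratch; the key observation that each $P(\nu)$ stays projective in $\tcatO$ is correct (given a surjection $E\epi F$ in $\tcatO$ and a map from $P(\nu)$, its image is finitely generated, and lifting finitely many generators produces a finitely generated submodule $E'\subseteq E$ surjecting onto that image, so the lifting problem takes place in $\catO$). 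This buys a self-contained argument valid in the enlarged category, at the cost of more bookkeeping than the paper's two-line citation. One small point to tighten: in your final step the second argument, e.g.\ $\Delta(\lambda_1)\otimes N$, lies in $\tcalF(\nabla)$ but not in $\catO$, so ``$\mathrm{Ext}^1_{\catO}$'' is not literally defined for it; you should either decompose it as a direct sum of objects of $\calF(\nabla)$ and use that $\Hom(P(\nu),-)$ commutes with such direct sums (the $P(\nu)$ being finitely generated), or perform the same finite-generation reduction in the second variable. With that addition the argument is complete.
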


\begin{proof}
	By Proposition~\ref{prop:preservestilt},
	$M\otimes N\in\tcalF(\Delta)\cap\tcalF(\nabla)=\tcalT$.
	Furthermore, $\Delta(\lambda)\otimes\nabla(\mu)\in\tcalT$ for all $\lambda, \mu\in\frak h^*$.
	Since tensoring over a field,
	the second statement now follows from the fact
	that tilting modules do not have
	self-extensions~\cite[Corollary 3]{ringel}.
\end{proof}

Following \cite{fiebig}, for $\lambda\in\frak h^*$ and any weight
module $M$ we define
\[
	M^{\leqslant\lambda}\defeq M/M^{\nleqslant\lambda},
\]
where $M^{\nleqslant\lambda}$ is the submodule of $M$
generated by all the weight spaces $M_\mu$
with $\mu\not\leqslant\lambda$.

\begin{lemma}
	The assignment $\lowbar^{\leqslant\lambda}\colon M\mapsto M^{\leqslant\lambda}$
	defines a right exact functor
	on the category of weight $\frak g$-modules.
\end{lemma}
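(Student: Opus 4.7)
The plan is to first check functoriality of $\lowbar^{\leqslant\lambda}$, and then to establish right exactness by a short diagram chase that exploits the fact that $\lowbar^{\nleqslant\lambda}$ behaves well under surjections.

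For functoriality, I would observe that any morphism $f\colon M\rightarrow N$ of weight modules sends $M_\mu$ into $N_\mu$ for every $\mu\in\frak h^*$, and therefore carries $M^{\nleqslant\lambda}$---the $\frak g$-submodule generated by the $M_\mu$ with $\mu\not\leqslant\lambda$---into $N^{\nleqslant\lambda}$. This induces a well-defined map $f^{\leqslant\lambda}\colon M^{\leqslant\lambda}\rightarrow N^{\leqslant\lambda}$, and the assignment $f\mapsto f^{\leqslant\lambda}$ manifestly respects composition and identities.

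For right exactness I would start with a right exact sequence $A\xrightarrow{f}B\xrightarrow{g}C\rightarrow 0$ of weight modules and first verify the key identity $g(B^{\nleqslant\lambda})=C^{\nleqslant\lambda}$. The inclusion $\subseteq$ is immediate since $g(B_\mu)\subseteq C_\mu$, while the reverse inclusion uses the surjectivity of $g$ to conclude that each generator $C_\mu$ of $C^{\nleqslant\lambda}$ is already of the form $g(B_\mu)$. From this I would deduce that the induced map $B^{\leqslant\lambda}\rightarrow C^{\leqslant\lambda}$ is surjective and that its kernel equals
\[
	g^{-1}(C^{\nleqslant\lambda})/B^{\nleqslant\lambda}
	= \bigl(\ker g + B^{\nleqslant\lambda}\bigr)/B^{\nleqslant\lambda}
	= \bigl(\im f + B^{\nleqslant\lambda}\bigr)/B^{\nleqslant\lambda},
\]
which is precisely the image of $f^{\leqslant\lambda}$, as required.

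There is no real obstacle here; the only point worth flagging is that the same argument does \emph{not} extend to left exactness, since for an inclusion $A\hookrightarrow B$ the submodule $B^{\nleqslant\lambda}$ may absorb weight spaces of $A$ that are not themselves contained in $A^{\nleqslant\lambda}$, so the induced map on $\lowbar^{\leqslant\lambda}$ need not be injective.
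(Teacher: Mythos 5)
Your proof is correct and follows essentially the same route as the paper: both arguments hinge on the identity $g(B^{\nleqslant\lambda})=C^{\nleqslant\lambda}$ for a surjection $g$ of weight modules, and then deduce surjectivity of $g^{\leqslant\lambda}$ together with $\ker g^{\leqslant\lambda}=\im f^{\leqslant\lambda}$. The only difference is stylistic: the paper runs an element-by-element chase (correcting a kernel element by some $\tilde m\in B^{\nleqslant\lambda}$ and lifting along $f$), while you compute the kernel once and for all as $\bigl(\ker g+B^{\nleqslant\lambda}\bigr)/B^{\nleqslant\lambda}$, which is a slightly cleaner packaging of the same argument.
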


\begin{proof}
	Let $M$ and $N$ be weight $\frak g$-modules, and let
	$\phi:M\rightarrow N$ be a homomorphism. Since homomorphisms
	preserve weights, the generating set for $M^{\nleqslant\lambda}$
	maps to the generating set for $N^{\nleqslant\lambda}$, and hence
	$\phi\bigl(M^{\nleqslant\lambda}\bigr)\subseteq N^{\nleqslant\lambda}$.
	We thus obtain an induced homomorphism
	\[
		\phi^{\leqslant\lambda}\colon M^{\leqslant\lambda}\rightarrow N^{\leqslant\lambda}.
	\]
	It is immediate that $(\Id_{M})^{\leqslant\lambda} = \Id_{M^{\leqslant\lambda}}$
	and 
	$(\phi\circ\psi)^{\leqslant\lambda}=\phi^{\leqslant\lambda}\circ\psi^{\leqslant\lambda}$,
	so $\lowbar^{\leqslant\lambda}$ is indeed a functor.
	
	Now, consider an exact sequence
	\[
		K\xrightarrow{\psi}M\xrightarrow{\phi}N\rightarrow 0
	\]
	of weight $\frak g$-modules.
	For any element
	$n+N^{\nleqslant\lambda}\in N^{\leqslant\lambda}$, there is an element
	$m\in M$ with $\phi(m)=n$, so
	\[
		\phi^{\leqslant\lambda}\bigl(m+M^{\nleqslant\lambda}\bigr) = n+N^{\nleqslant\lambda},
	\]
	and thus $\phi^{\leqslant\lambda}$ is surjective. Finally, consider an
	element
	\[
		m+M^{\nleqslant\lambda}\in\ker\phi^{\leqslant\lambda},
	\]
	i.e.\ $\phi(m+M^{\nleqslant\lambda})\subseteq N^{\nleqslant\lambda}$.
	Since $\phi$ is surjective, we have $\phi(M^{\nleqslant\lambda})=N^{\nleqslant\lambda}$,
	so there is an element $\tilde m\in M^{\nleqslant\lambda}$ with
	$\phi(\tilde m) = \phi(m)$. Now let $m' = m-\tilde m$. Since
	\[
		\phi(m') = \phi(m)-\phi(\tilde m) = 0
	\]
	we have $m'\in\ker\phi$, and since $\tilde m\in M^{\nleqslant\lambda}$
	we have
	\[
		m'+M^{\nleqslant\lambda}=m+M^{\nleqslant\lambda}.
	\]
	By exactness, there
	is an element $k\in K$ with $\psi(k)=m'$, so
	\[
		\psi^{\leqslant\lambda}\bigl(k+K^{\nleqslant\lambda}\bigr)
		= m'+M^{\nleqslant\lambda} = m+M^{\nleqslant\lambda}.
	\]
	Hence $\im\psi^{\leqslant\lambda}=\ker\phi^{\leqslant\lambda}$, and
	thus $\lowbar^{\leqslant\lambda}$ is right exact.
\end{proof}

\begin{proposition}
	\label{prop:freeproj}
	Let $M$ be an $\enva{\frak n_-}$-free module, say
	\[
		M = \bigoplus_{i\in I}\enva{\frak n_-}v_i
	\]
	as an $\enva{\frak n_-}$-module
	with $\set{\,v_i\,\vert\,i\in I\,}$ being weight vectors.
	Then
	\[
		M^{\leqslant\lambda} \cong
		\bigoplus_{\substack{i\in I,\\ \mathclap{\weight(v_i)\leqslant\lambda}}}
		\enva{\frak n_-}v_i.
	\]
	as a $\enva{\frak n_-}$-module.
\end{proposition}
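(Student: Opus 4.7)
The plan is to split the indexing set as $I=I_1\sqcup I_2$, where $I_1\defeq\set{i\in I:\weight(v_i)\leqslant\lambda}$ and $I_2\defeq I\setminus I_1$, set $M_s\defeq\bigoplus_{i\in I_s}\enva{\frak n_-}v_i$ for $s=1,2$, and show that $M^{\nleqslant\lambda}=M_2$ as subsets of $M$. Once this is done, the proposition is immediate: $M=M_1\oplus M_2$ as $\enva{\frak n_-}$-modules, so the composition $M_1\hookrightarrow M\epi M/M_2=M^{\leqslant\lambda}$ is an isomorphism of $\enva{\frak n_-}$-modules, which is exactly the claim.

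The inclusion $M_2\subseteq M^{\nleqslant\lambda}$ is straightforward. Each generator $v_i$ with $i\in I_2$ lies in the weight space $M_{\weight(v_i)}$ with $\weight(v_i)\not\leqslant\lambda$, so $v_i\in M^{\nleqslant\lambda}$; since $M^{\nleqslant\lambda}$ is a $\frak g$-submodule, hence stable under $\enva{\frak n_-}$, we get $\enva{\frak n_-}v_i\subseteq M^{\nleqslant\lambda}$, and summing over $i\in I_2$ gives $M_2\subseteq M^{\nleqslant\lambda}$.

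For the reverse inclusion, I would apply the PBW decomposition $\enva{\frak g}=\enva{\frak n_-}\enva{\frak h}\enva{\frak n_+}$ to write
\[
    M^{\nleqslant\lambda}=\enva{\frak n_-}\enva{\frak h}\enva{\frak n_+}\,S,\qquad S\defeq\bigcup_{\mu\not\leqslant\lambda}M_\mu,
\]
and then verify that $\enva{\frak h}S\subseteq S$ (since $\enva{\frak h}$ preserves each weight space) and $\enva{\frak n_+}S\subseteq S$, the latter because raising a weight $\mu\not\leqslant\lambda$ by $\alpha\in\bbN_0\roots_+$ cannot land under $\lambda$: $\mu+\alpha\leqslant\lambda$ would yield $\mu\leqslant\lambda-\alpha\leqslant\lambda$. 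These two closures collapse the expression above to $M^{\nleqslant\lambda}=\enva{\frak n_-}S$. Finally, since $\enva{\frak n_-}$ lowers weights by elements of $\bbN_0\roots_-$, every weight-homogeneous component of $M_1$ has weight bounded above by some $\weight(v_j)\leqslant\lambda$; in particular $(M_1)_\mu=0$ for $\mu\not\leqslant\lambda$, so $M_\mu=(M_2)_\mu$ for such $\mu$, whence $S\subseteq M_2$. Using $\enva{\frak n_-}$-stability of $M_2$, this gives $M^{\nleqslant\lambda}=\enva{\frak n_-}S\subseteq M_2$.

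The calculational content is light; the one point where care is needed is the PBW reduction of the generated $\frak g$-submodule to its $\enva{\frak n_-}$-span, which rests precisely on the two closure properties of $S$ noted above, together with the observation that $\enva{\frak n_-}$-action only decreases weights.
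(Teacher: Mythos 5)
Your proof is correct and follows essentially the same route as the paper: both identify $M^{\nleqslant\lambda}$ with the $\enva{\frak n_-}$-span of the generators $v_i$ with $\weight(v_i)\nleqslant\lambda$, using the PBW factorisation $\enva{\frak g}=\enva{\frak n_-}\,\enva{\frak b}$ (your $\enva{\frak n_-}\,\enva{\frak h}\,\enva{\frak n_+}$) together with the observation that $\frak b$ cannot move a weight $\mu\nleqslant\lambda$ into the region $\leqslant\lambda$. The only difference is organisational: you compute the generated submodule $\enva{\frak g}S$ and collapse it to $\enva{\frak n_-}S\subseteq M_2$, whereas the paper verifies directly that the candidate span is a $\frak g$-submodule; your version also makes explicit the inclusion of the bad weight spaces in $M_2$ that the paper leaves implicit at its step $(*)$.
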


\begin{proof}
	We claim that
	\[
		M^{\nleqslant\lambda} =
		\sum_{\substack{i\in I,\\ \mathclap{\weight(v_i)\nleqslant\lambda}}}
		\enva{\frak n_-}v_i.
	\]
	To show this, let $N$ denote the set on the right hand side. We need to
	show that $N$ is indeed a submodule of $M$, i.e. closed under the action
	of $\enva{\frak g}$. By the Poincar\'e-Birkhoff-Witt Theorem we know that
	$\enva{\frak g} = \enva{\frak n_-}\,\enva{\frak b}$, so
	\begin{align*}
		\enva{\frak g}N
		&= \sum_{\substack{i\in I,\\ \mathclap{\weight(v_i)\nleqslant\lambda}}}
			\enva{\frak g}\,\enva{\frak n_-}v_i \\
		&= \sum_{\substack{i\in I,\\ \mathclap{\weight(v_i)\nleqslant\lambda}}}
		\enva{\frak g}v_i \\
		&= \sum_{\substack{i\in I,\\ \mathclap{\weight(v_i)\nleqslant\lambda}}}
		\enva{\frak n_-}\,\enva{\frak b}v_i \\
		&\stackrel{(*)}{=} \sum_{\substack{i\in I,\\ \mathclap{\weight(v_i)\nleqslant\lambda}}}
		\enva{\frak n_-}v_i \\
		&= N,
	\end{align*}
	where (*) holds since if $\weight(v_i)\nleqslant\lambda$,
	then $\mu\nleqslant\lambda$ for any $\mu\in\Supp(\enva{\frak b}v_i)$.
	Thus, as a $\enva{\frak n_-}$-module, we have
	\[
		M^{\nleqslant\lambda} =
		\bigoplus_{\substack{i\in I,\\ \mathclap{\weight(v_i)\nleqslant\lambda}}}
		\enva{\frak n_-}v_i,
	\]
	and hence we get that
	\[
		M^{\leqslant\lambda} \cong
		\bigoplus_{\substack{i\in I,\\ \mathclap{\weight(v_i)\leqslant\lambda}}}
		\enva{\frak n_-}v_i.
	\]
	as a $\enva{\frak n_-}$-module.
\end{proof}

\begin{proposition}
	\label{prop:fpreservesdelta}
	For any $M\in\calF(\Delta)$, $N\in\tcatO$ and $\lambda\in\frak h^*$
	we have that $(M\otimes N^*)^{\leqslant\lambda}\in\calF(\Delta)$.
\end{proposition}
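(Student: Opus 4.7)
The plan is to show that $M\otimes N^*$, although not itself in $\tcatO$, is free as a $\enva{\frak n_-}$-module on an explicit weight basis, after which Proposition~\ref{prop:freeproj} finishes the job. Proposition~\ref{prop:preservestilt} cannot be applied verbatim: since $\Supp N^*=-\Supp N$ is bounded below rather than above, $N^*\not\in\tcatO$; the trick is to adapt its proof with the roles of the two tensor factors interchanged.

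\textbf{Setup.} Since $M\in\calF(\Delta)$, I would pick a finite $\enva{\frak n_-}$-basis of weight vectors $v_1,\dotsc,v_k$ for $M$. Since $N\in\tcatO$, I would pick a weight basis $n_1,n_2,\dotsc$ of $N$ ordered with higher weights first, as in the proof of Lemma~\ref{lem:findecomp}, and let $n_1^*,n_2^*,\dotsc$ denote the corresponding dual basis of $N^*$. The key observation is that for every $u\in\enva{\frak n_-}$ of positive degree, $u\cdot n_i^*$ lies in the $\bbC$-span of $\{n_j^*:j<i\}$: indeed $u$ strictly lowers the weight in $N^*$, a strictly lower $N^*$-weight corresponds to a strictly higher $N$-weight, and by the chosen ordering on $N$ this forces a strictly smaller index. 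This compatibility plays the role that $\tcatO$-membership played in Proposition~\ref{prop:preservestilt}.

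\textbf{Freeness of $M\otimes N^*$.} With the key observation in hand, I would show that $B\defeq\{v_j\otimes n_i^*:1\leq j\leq k,\,i\geq 1\}$ is an $\enva{\frak n_-}$-basis of $M\otimes N^*$, mimicking the proof of Proposition~\ref{prop:preservestilt}. Generation is a $\deg u$-induction from the coproduct identity $u(v_j\otimes n_i^*)=uv_j\otimes n_i^*+v_j\otimes un_i^*+(\text{middle terms})$ combined with the key observation. For freeness, I would use the projection $P_p\colon M\otimes N^*\to M\otimes\bbC n_p^*\cong M$ onto the $n_p^*$-coordinate: although $P_p$ is not $\enva{\frak n_-}$-equivariant, the coproduct identity gives $P_p\bigl(u(v_j\otimes n_p^*)\bigr)=uv_j$, because every other contribution has $N^*$-index strictly below $p$. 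Together with the $\enva{\frak n_-}$-freeness of $M$, this forces the $\enva{\frak n_-}$-span of $\{v_j\otimes n_p^*\}_j$ to be free of rank $k$ and to meet the $\enva{\frak n_-}$-span of $\{v_j\otimes n_i^*:i<p\}$ (on which $P_p$ vanishes, again by the key observation) only in zero. Iterating on $p$ yields $M\otimes N^*=\bigoplus_{i,j}\enva{\frak n_-}(v_j\otimes n_i^*)$.

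\textbf{Truncation and conclusion.} Proposition~\ref{prop:freeproj} now yields that $(M\otimes N^*)^{\leqslant\lambda}$ is $\enva{\frak n_-}$-free on the subset of $B$ consisting of those $v_j\otimes n_i^*$ with $\weight(v_j)-\weight(n_i)\leqslant\lambda$. This condition traps $\weight(n_i)$ between the upper bound on $\Supp N$ from (OT1) and a lower bound determined by $\lambda$ and the finitely many $\weight(v_j)$'s, restricting it to a finite subset of $\frak h^*$; (OT2) then leaves only finitely many such indices $i$. Hence the resulting basis is finite, $(M\otimes N^*)^{\leqslant\lambda}$ is finitely generated over $\enva{\frak n_-}$ and therefore lies in $\catO$, and being $\enva{\frak n_-}$-free it belongs to $\calF(\Delta)$. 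The real technical content is concentrated in the freeness step; the remainder is a formal application of Proposition~\ref{prop:freeproj} together with a finiteness count.
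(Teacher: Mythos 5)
Your proof is correct and follows essentially the same route as the paper: the same $\enva{\frak n_-}$-basis $\{v_j\otimes n_i^*\}$ of $M\otimes N^*$, then Proposition~\ref{prop:freeproj} plus the finiteness count coming from (OT1)/(OT2). The only difference is that the paper delegates the freeness of $M\otimes N^*$ to the argument ``analogous to the finite-dimensional case'' (Jantzen, Theorem~2.2), whereas you spell it out via the ordering of the dual basis and the projection onto the top index, which is a faithful rendering of that cited argument.
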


\begin{proof}
	Let $m_1, \dotsc, m_k\in M$ be a basis of $M$ as a $\enva{\frak n_-}$-module
	consisting of weight vectors, and let $n_1, n_2, \dotsc\in N$ be a basis
	of $N$ constructed as in the proof of Lemma~\ref{lem:findecomp}.
	By an argument completely analogous to the case where $N$ is finite
	dimensional (see for instance the proof of Theorem~2.2 in \cite{jantzen}),
	it follows that $M\otimes N^*$ is $\enva{\frak n_-}$-free over the set
	\[
		B=\bigl\{\,m_i\otimes n_j^*\,\vert\,1\leq i\leq k, j\in\bbN\,\bigr\}.
	\]
	By Proposition~\ref{prop:freeproj} it follows that $(M\otimes N^*)^{\leqslant\lambda}$
	is $\enva{\frak n_-}$-free, with a $\enva{\frak n_-}$-basis consisting of the
	vectors in $B$ satisfying $\weight(m_i\otimes n_j^*)\leqslant\lambda$. Since
	$N\in\tcatO$, the number of such vectors is finite, and hence
	$(M\otimes N^*)^{\leqslant\lambda}\in\catO$,
	i.e.\ $(M\otimes N^*)^{\leqslant\lambda}\in\calF(\Delta)$.
\end{proof}

\begin{corollary}
	\label{cor:tensdual}
	For each $M\in\catO$, $N\in\tcatO$ and $\lambda\in\frak h^*$
	we have
	\[
		(M\otimes N^*)^{\leqslant\lambda}\in\catO.
	\]
\end{corollary}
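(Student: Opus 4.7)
The plan is to reduce the statement to Proposition~\ref{prop:fpreservesdelta} by replacing the arbitrary module $M\in\catO$ with a projective cover. Since $\catO$ has enough projectives, I can choose a projective module $P\in\catO$ together with a surjection $\pi\colon P\epi M$. A standard result in BGG theory tells us that every projective in $\catO$ admits a Verma flag, so $P\in\calF(\Delta)$, which is exactly the hypothesis needed to invoke the previous proposition.

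Next I would exploit that tensoring a surjection with an arbitrary vector space (hence with $N^*$) over $\bbC$ stays surjective, so $\pi\otimes\Id_{N^*}\colon P\otimes N^*\epi M\otimes N^*$ is a surjection of weight $\frak g$-modules. Then, since $\lowbar^{\leqslant\lambda}$ was just shown to be right exact, applying it to the surjection
\[
	P\otimes N^*\epi M\otimes N^*\rightarrow 0
\]
produces a surjection
\[
	(P\otimes N^*)^{\leqslant\lambda}\epi (M\otimes N^*)^{\leqslant\lambda}.
\]

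At this point Proposition~\ref{prop:fpreservesdelta}, applied to the pair $(P,N)$, gives that $(P\otimes N^*)^{\leqslant\lambda}\in\calF(\Delta)\subseteq\catO$. Since $\catO$ is closed under taking quotients, the target $(M\otimes N^*)^{\leqslant\lambda}$ lies in $\catO$ as well, which is exactly what we need. There is essentially no hard step here; the only thing to be a bit careful about is invoking the right exactness of $\lowbar^{\leqslant\lambda}$ (just proved) together with the fact that $\lowbar\otimes N^*$ preserves surjections, so that the proposition's statement for $P$ transfers to its quotient $M$.
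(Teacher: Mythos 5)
Your argument is correct and is essentially the paper's own proof: the paper likewise takes a projective cover $P\epi M$, notes $P$ has a Verma flag so that Proposition~\ref{prop:fpreservesdelta} applies, and uses the right exactness of $\lowbar^{\leqslant\lambda}$ to pass the conclusion to the quotient $M$. Your write-up merely makes explicit the intermediate surjection $(P\otimes N^*)^{\leqslant\lambda}\epi(M\otimes N^*)^{\leqslant\lambda}$ and the closure of $\catO$ under quotients, which the paper leaves implicit.
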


\begin{proof}
	Let $P\in\catO$ be the projective cover of $M$. As $\lowbar^{\leqslant\lambda}$
	is right exact, it suffices to prove that $(P\otimes N^*)^{\leqslant\lambda}\in\catO$.
	But this follows from Proposition~\ref{prop:fpreservesdelta},
	since every projective in $\catO$ has a Verma flag.
\end{proof}

\section{The functors}
\label{sec:main}

We now restrict our attention to the principal block $\catO_0$, i.e. the
indecomposable block
containing the trivial module $L(0)$.
Let $\PFun(\catO_0)$, $\TFun(\catO_0)$
and $\IFun(\catO_0)$ denote the categories of endofunctors on $\catO_0$
which preserve the additive subcategories of projective, tilting and
injective modules, respectively. Furthermore, let
$\calF_0(\Delta)=\calF(\Delta)\cap\catO_0$, and define
$\calF_0(\nabla)$ and $\calT_0$ similarly. This section will be devoted
to proving the following theorem, the main result of this paper, along with
some of its consequences.

\begin{theorem}
	\label{thm:maintheorem}
	There exist faithful functors
	\begin{align*}
		F&\colon\catO_0\hookrightarrow\PFun(\catO_0)^{\text{op}}, M\mapsto F_M,\\
		G&\colon\catO_0\hookrightarrow\TFun(\catO_0), M\mapsto G_M,\\
		H&\colon\catO_0\hookrightarrow\IFun(\catO_0)^{\text{op}}, M\mapsto H_M,
	\end{align*}
	all three satisfying $X_M\cong X_N$ if and only if $M\cong N$
	(where $X=F, G, H$).
\end{theorem}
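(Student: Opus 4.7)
My plan is to handle $G$ directly and then bootstrap $F$ and $H$ from $G$ by the adjoint/mate calculus.

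First I would verify that $G_M(N) \defeq (M \otimes N)\restrict_0$ is an exact endofunctor of $\catO_0$, functorial also in $M$: the tensor product lies in $\tcatO$ by closure under tensor products, and its $0$-th block summand lies in $\catO_0 \subseteq \catO$ by the block-decomposition lemma for $\tcatO$; exactness follows from exactness of $M \otimes -$ (we tensor over $\bbC$) together with exactness of block projection; and functoriality in $M$, sending $\varphi \colon M \to M'$ to the natural transformation with components $(\varphi \otimes \Id_N)\restrict_0$, is immediate. That $G_M$ preserves tilting modules is then essentially Proposition~\ref{prop:preservestilt}: $M \otimes T \in \tcalT$ for $T \in \calT_0$, and a direct summand of a tilting in $\tcalT$ is again tilting, so $G_M(T) \in \calT_0$.

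Faithfulness of $G$ and iso-reflection both fall out from a single observation: $L(0) = \bbC$ is the trivial module, so $G_M(L(0)) = (M \otimes \bbC)\restrict_0 = M$ and $G_\varphi(L(0)) = \varphi$ for every $\varphi \colon M \to M'$. Thus $\varphi$ is recoverable from $G_\varphi$, proving $G$ faithful; and any natural isomorphism $G_M \cong G_N$ restricts at $L(0)$ to an isomorphism $M \cong N$.

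Next, I would define $F_M$ and $H_M$ as the left and right adjoints of $G_M$. Their existence is guaranteed by the fact that $\catO_0$ is equivalent to the category of modules over the finite-dimensional $\bbC$-algebra $A \defeq \End_{\catO_0}(P)^{\text{op}}$, where $P$ is a projective generator of $\catO_0$; every exact endofunctor of such a category admits both adjoints. A more hands-on construction is available through Corollary~\ref{cor:tensdual}: one can set $F_M(N) \defeq ((N \otimes M^*)^{\leqslant 0})\restrict_0$, verifying the adjunction by tensor-hom and the fact that $\Supp L \leqslant 0$ for all $L \in \catO_0$, with a dual construction for $H_M$. The preservation properties then come for free: $\Hom(F_M(P), -) \cong \Hom(P, G_M(-))$ is exact for $P$ projective (by exactness of $G_M$), so $F_M(P)$ is projective, and dually $H_M$ preserves injectives.

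Finally, contravariance, faithfulness, and iso-reflection for $F$ and $H$ follow by the mate correspondence. A natural transformation $G_M \to G_N$ corresponds bijectively and additively to mates $F_N \to F_M$ and $H_N \to H_M$, carrying isomorphisms to isomorphisms; applied to $G_\varphi$ this defines contravariant functors $F$ and $H$, and additivity together with the results just proved for $G$ immediately gives faithfulness and iso-reflection for $F$ and $H$. The principal obstacle in this programme is the rigorous construction of $F_M$ and $H_M$ in the infinite-dimensional setting: while their existence is essentially formal once $G_M$ is exact, a transparent functorial definition really demands the $\tcatO$-machinery, the block decomposition, and the truncation functors $\lowbar^{\leqslant\lambda}$ developed in Section~\ref{sec:notation}.
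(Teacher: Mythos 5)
Your proposal is correct and follows the paper's overall architecture: define $G_M=(M\otimes\lowbar)\restrict_0$ directly, get tilting preservation from Proposition~\ref{prop:preservestilt}, evaluate at $L(0)$ for faithfulness and iso-reflection of $G$, obtain $F_M$ and $H_M$ as adjoints of the exact functor $G_M$ (the paper, like you, invokes the equivalence $\catO_0\simeq A\text{-mod}$ and Bass to realise $G_M$ as tensoring with a bimodule, takes $H_M=\Hom_A(\overline M,\lowbar)$, and gets $F_M$ by conjugating with $\star$; your ``hands-on'' alternative $F_MN=(M^*\otimes N)^{\leqslant 0}\restrict_0$ is exactly the paper's Theorem~\ref{thm:gadj}), prove preservation of projectives/injectives by the adjunction $\Hom(F_MP,\lowbar)\cong\Hom(P,G_M\lowbar)$, and deduce $X_M\cong X_N\Rightarrow M\cong N$ from uniqueness of adjoints plus evaluation at $L(0)$. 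The one genuine divergence is how $F$ and $H$ act on morphisms and why they are faithful: the paper defines $F_\phi N=(\phi^*\otimes\Id_N)^{\leqslant 0}\restrict_0$ explicitly and proves faithfulness by a concrete computation, tracking a lowest weight vector of $\im\phi^*$ tensored with the highest weight vector of the antidominant Verma module $\Delta(\lambda)$ and showing it survives in $F_M\Delta(\lambda)$; you instead define $F_\phi$, $H_\phi$ as the mates (conjugates) of $G_\phi$ under the adjunctions, so that contravariant functoriality, faithfulness and iso-reflection are inherited formally from $G$ because the mate correspondence is an additive bijection carrying isomorphisms to isomorphisms. Your route is cleaner and suffices for the theorem as stated (it constructs some faithful $F$ and $H$ with the required properties, a priori defined differently on morphisms than the paper's explicit formula, though they in fact agree up to the adjunction isomorphisms); what the paper's more laborious computation buys is compatibility with the explicit truncation formula used throughout Sections~\ref{sec:main}--\ref{sec:example}, and its final proposition even establishes the sharper fact that already the restrictions of $F_M$, $G_M$, $H_M$ to projective, tilting and injective modules distinguish $M$ up to isomorphism, which the purely formal mate argument does not give.
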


For $M\in\catO_0$, we define the functor $G_M:\catO_0\rightarrow\catO_0$ by
\[
	G_MN \defeq (M\otimes N)\restrict_0
\]
on objects, and
\begin{align*}
	G_M\phi&\colon G_MK\rightarrow G_ML, \\
	G_M\phi&= (\text{Id}_M\otimes \phi)\restrict_0
		\defeq \pi_{G_ML}\circ (\text{Id}_M\otimes \phi) \circ \iota_{G_M K},
\end{align*}
on morphisms $\phi\colon K\rightarrow L$, where
$\pi_{G_ML}\colon M\otimes L\epi (M\otimes L)\restrict_0$ and
$\iota_{G_MK}\colon (M\otimes K)\restrict_0\hookrightarrow M\otimes K$ denote
the natural projection and inclusion. This defines $G_M$ as an endofunctor
on $\catO_0$.

\begin{remark}
	\label{rem:ntransfactors}
	By central character considerations (i.e. from the fact that $G_ML\in\catO_0$), it
	follows that $\pi_{G_ML}\circ(\text{Id}_M\otimes \phi)$ factors through
	$G_M\phi$, i.e. the diagram
	\[
		\includegraphics{ntransfactors.1}
	\]
	commutes.
\end{remark}

For a homomorphism $\phi\colon M\rightarrow N$ between to objects $M, N\in\catO_0$,
we define the corresponding natural transformation $G_{\phi}\colon G_M\rightarrow G_N$
by
\begin{align*}
	G_{\phi}K&\colon G_MK\rightarrow G_NK, \\
	G_{\phi}K&\defeq (\phi\otimes\text{Id}_K)\restrict_0,
\end{align*}
for $K\in\catO_0$. This defines $G$ as a functor from the category $\catO_0$
to the category of endofunctors on $\catO_0$.

Since both $M\otimes\lowbar$ and $\lowbar\restrict_0$ are exact (as the tensor
product is over a field),
it follows that $G_M$ is exact.
Recall that the category $\catO_0$ is equivalent to $A$-mod,
the category of $A$-modules, for some
finite dimensional algebra $A$ (see~\cite{bgg}).
Hence $G_M$ can be seen as an exact functor on $A$-mod,
and in particular $G_M$ is right exact on $A$-mod.
By abstract theory (e.g. Theorem~2.3, \cite{bass}), $G_M$ is naturally
isomorphic to a functor on the form $\overline M\otimes_A\lowbar$
for some $A$-bimodule $\overline M$. We define
\[
	H_M\defeq\Hom_A(\overline M, \lowbar),
\]
the right adjoint of $G_M$. The dual $\star$ is a self-adjoint
contravariant endo\-functor on $\catO_0$, so for any modules $K, L\in\catO_0$
we have the following natural isomorphisms
\begin{align*}
	\Hom_{\catO_0}\bigl(L,(G_{M^\star}K^\star)^\star\bigr)
	&\cong \Hom_{\catO_0}(G_{M^\star}K^\star, L^\star) \\
	&\cong \Hom_{\catO_0}(K^\star, H_{M^\star}L^\star) \\
	&\cong \Hom_{\catO_0}\bigl((H_{M^\star}L^\star)^\star, K\bigr).
\end{align*}
Furthermore, since $\star$ commutes with direct sums and tensor products,
we see that
\[
	(G_{M^\star}K^\star)^\star
	= \bigl((M^\star\otimes K^\star)\restrict_0\bigr)^\star
	= (M\otimes K)\restrict_0 = G_MK.
\]
Thus $\star\circ H_{M^\star}\circ\star$ is the left adjoint of $G_M$, and
we define
\begin{equation}
	\label{eq:fmdef}
	F_M\defeq \star\circ H_{M^\star}\circ\star.
\end{equation}

\begin{proposition}
	For any $M\in\catO_0$ we have that
	$F_M\in\PFun(\catO_0)$, $G_M\in\TFun(\catO_0)$ and
	$H_M\in\IFun(\catO_0)$.
\end{proposition}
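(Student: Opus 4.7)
The plan is to handle the three claims separately, using that $G_M$ is exact (established in the preceding text) and that, by construction, $F_M \dashv G_M \dashv H_M$. The claims about $F_M$ and $H_M$ will then be formal consequences of adjointness, so the substantive content lies with the claim about $G_M$.

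For $G_M \in \TFun(\catO_0)$, I would take $T \in \calT_0 \subseteq \tcalT$. Since $M \in \catO_0 \subseteq \tcatO$, Proposition~\ref{prop:preservestilt} yields $M \otimes T \in \tcalT$, and it remains to show that $G_M T = (M \otimes T)\restrict_0$ actually lies in $\calT_0$. This requires two things. First, $G_M T$ is finitely generated: the principal block has only the finitely many simples $L(w \cdot 0)$ with $w$ in the Weyl group, and each appears with finite multiplicity in $M \otimes T$ by Lemma~\ref{lem:findecomp}, so $G_M T$ has finite length. Second, $G_M T$ inherits a Verma and a dual Verma filtration: applying the exact projection $\lowbar\restrict_0$ to a Verma (respectively dual Verma) filtration of $M \otimes T$ gives a filtration whose sections are of the form $\Delta(\mu)\restrict_0$, each of which is either $\Delta(\mu)$ itself or zero because every Verma module has a single central character. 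Finite length ensures that only finitely many non-zero sections survive, so the projected filtration is a genuine finite Verma (respectively dual Verma) flag, giving $G_M T \in \calF_0(\Delta) \cap \calF_0(\nabla) = \calT_0$.

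For the remaining two claims, the argument is purely formal. For $P \in \catO_0$ projective, the adjunction isomorphism $\Hom_{\catO_0}(F_M P, \lowbar) \cong \Hom_{\catO_0}(P, G_M \lowbar)$ exhibits the left-hand side as the composition of the exact functor $G_M$ with the exact functor $\Hom_{\catO_0}(P, \lowbar)$, so $F_M P$ is projective. Dually, for $I \in \catO_0$ injective, $\Hom_{\catO_0}(\lowbar, H_M I) \cong \Hom_{\catO_0}(G_M \lowbar, I)$ is exact in the empty slot, so $H_M I$ is injective.

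The main obstacle in the proposition is thus the tilting statement, and specifically the subtlety that a Verma filtration of $M \otimes T \in \tcalT$ can in general be transfinite since $M \otimes T$ only lives in $\tcatO$, not in $\catO$. The remedy is precisely the finite-length observation above: once one projects to $\catO_0$, the surviving filtration pieces are finite in number, collapsing a potentially transfinite filtration in $\tcalF(\Delta)$ (respectively $\tcalF(\nabla)$) to an honest finite Verma (respectively dual Verma) flag in $\calF_0(\Delta)$ (respectively $\calF_0(\nabla)$).
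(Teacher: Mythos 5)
Your proposal is correct and follows essentially the same route as the paper: the tilting statement is deduced from Proposition~\ref{prop:preservestilt}, and the projective and injective statements are formal consequences of the adjunctions $F_M \dashv G_M \dashv H_M$ together with exactness of $G_M$ (the paper handles $H_M$ by $\star$-duality, which amounts to the same dual adjunction argument you give). The only difference is that you spell out why the block projection of a module in $\tcalT$ is an honest tilting module in $\calT_0$ (finite length plus collapse of the infinite flags), a point the paper leaves implicit.
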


\begin{proof}
	That $G_M\in\TFun(\catO_0)$ follows from Proposition~\ref{prop:preservestilt}.
	Assume that $P\in\catO_0$ is projective, i.e. the functor
	$\Hom(P, \lowbar)$ is exact. We need to show that
	$F_MP$ is projective, i.e. that $\Hom(F_MP, \lowbar)$ is exact.
	But
	\[
		\Hom(F_MP, \lowbar)\cong\Hom(P, G_M\lowbar),
	\]
	and the right hand side is the composition of two exact functors,
	so it is exact. The statement $H_M\in\IFun(\catO_0)$ follows by duality.
\end{proof}

\begin{theorem}
	\label{thm:gadj}
	The left adjoint $F_M$ of $G_M$ is given by
	\[
		F_MN = (M^*\otimes N)^{\leqslant 0}\big\restrict_0,
	\]
	and the right adjoint $H_M$ by
	\[
		H_M = \star\circ F_{M^\star}\circ\star.
	\]
\end{theorem}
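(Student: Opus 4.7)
The plan is to show that $F_M'N\defeq(M^*\otimes N)^{\leqslant 0}\restrict_0$ is naturally isomorphic to the left adjoint $F_M$ of $G_M$; the formula for $H_M$ will then follow at once from~(\ref{eq:fmdef}). By Corollary~\ref{cor:tensdual} (applied with the roles of $M$ and $N$ interchanged), $(M^*\otimes N)^{\leqslant 0}$ lies in $\catO$, so $F_M'$ is a well-defined endofunctor of $\catO_0$. The heart of the proof is, for $N, K\in\catO_0$, the chain of natural isomorphisms
\begin{align*}
	\Hom_{\catO_0}\!\bigl(F_M'N, K\bigr)
	&\cong\Hom_{\frak g}\!\bigl((M^*\otimes N)^{\leqslant 0}, K\bigr)
	\cong\Hom_{\frak g}(M^*\otimes N, K)\\
	&\cong\Hom_{\frak g}(N, M\otimes K)
	\cong\Hom_{\catO_0}(N, G_M K).
\end{align*}
The outer isomorphisms are block considerations: $K\in\catO_0$ lets one drop $\restrict_0$ on the source, while $N\in\catO_0$ forces the image of any $\frak g$-homomorphism $N\to M\otimes K$ into the block-$0$ summand $G_M K$. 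The second isomorphism uses the standard fact that every module in $\catO_0$ has support contained in $\set{\,\mu\,\vert\,\mu\leqslant 0\,}$, so any $\frak g$-homomorphism $M^*\otimes N\to K$ must annihilate weight vectors of weight $\nleqslant 0$ and hence the submodule $(M^*\otimes N)^{\nleqslant 0}$ they generate, factoring through $(M^*\otimes N)^{\leqslant 0}$.

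The main obstacle is the third isomorphism---a tensor-hom adjunction $\Hom_{\frak g}(M^*\otimes N, K)\cong\Hom_{\frak g}(N, M\otimes K)$, which is not automatic because $M$ need not be finite-dimensional. Fixing a weight basis $\set{m_i}$ of $M$ with dual basis $\set{m_i^*}\subseteq M^*$, I would send $\phi\colon M^*\otimes N\to K$ to $\tilde\phi\colon N\to M\otimes K$ defined on a weight vector $n\in N_\nu$ by
\[
	\tilde\phi(n)\defeq\sum_i m_i\otimes\phi(m_i^*\otimes n).
\]
The critical finiteness check is that this sum is finite: a non-zero term requires $\nu-\weight(m_i)\in\Supp K$, and by (OT1) applied to $M$ and $K$ the set of pairs $(\mu,\kappa)\in\Supp M\times\Supp K$ with $\mu+\kappa=\nu$ is finite, so only finitely many weights $\weight(m_i)$ contribute; together with $\dim_\bbC M_\mu<\infty$ this shows $\tilde\phi(n)$ is a genuine element of $M\otimes K$. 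Verifying that $\tilde\phi$ is a $\frak g$-homomorphism, that $\phi\mapsto\tilde\phi$ is inverse to the assignment $\psi\mapsto(\mathrm{ev}\otimes\Id_K)\circ(\Id_{M^*}\otimes\psi)$ (with $\mathrm{ev}\colon M^*\otimes M\to\bbC$ the canonical evaluation), and that the bijection is natural in $N$ and $K$, is then a routine calculation.

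By the uniqueness of left adjoints, concatenating these natural isomorphisms gives $F_M\cong F_M'$. Finally, applying $\star$ to both sides of~(\ref{eq:fmdef}) and using $\star\circ\star\cong\Id$ gives $H_{M^\star}\cong\star\circ F_M\circ\star$; substituting $M\mapsto M^\star$ delivers the desired formula $H_M\cong\star\circ F_{M^\star}\circ\star$.
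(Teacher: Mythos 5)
Your proposal is correct and follows essentially the same route as the paper: the same chain of natural isomorphisms (block projection, factoring through the truncation $(M^*\otimes N)^{\leqslant 0}$, and the dual-basis construction $\tilde\phi(n)=\sum_i m_i\otimes\phi(m_i^*\otimes n)$ for the tensor-hom adjunction), with the $H_M$ formula read off from~\eqref{eq:fmdef}. Your finiteness check via (OT1) and the evaluation-map description of the inverse are only cosmetic variants of the paper's argument, which uses $\Supp K\leqslant 0$ and writes the inverse explicitly in the dual basis.
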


\begin{proof}
	The second statement follows immediately from the definition \eqref{eq:fmdef}.
	The proof of the first assertion is a slight variation of the proof
	of Proposition~5.1 in \cite{fiebig}, also due to Fiebig. We begin by
	showing that we have a natural isomorphism
	\[
		\Hom_{\frak g}(M^*\otimes K, L)\cong\Hom_{\frak g}(K, M\otimes L)
	\]
	for all $K, L, M\in\catO_0$. Let $m_1, m_2, \dotsc\in M$ be a basis
	consisting of weight vectors, and let $m_1^*, m_2^*, \dotsc\in M^*$
	denote the corresponding dual basis. For $f\in\Hom_{\frak g}(M^*\otimes K, L)$,
	define $\hat f\in\Hom_{\frak g}(K, M\otimes L)$ by
	\[
		\hat f(k) \defeq \sum_{i}m_i\otimes f(m_i^*\otimes k).
	\]
	Since $\Supp L\leqslant 0$, we see that the sum on the right hand side
	is finite, since $f(m_i^*\otimes k)=0$ for all $i$ with
	\[
		\weight(m_i^*\otimes k)\nleqslant 0.
	\]
	For $g\in\Hom_{\frak g}(K, M\otimes L)$, define
	$\tilde g\in\Hom_{\frak g}(M^*\otimes K, L)$ by
	\[
		\tilde g(m_i^*\otimes k)\defeq\sum_jm_i^*(m_j)\cdot l_j,
	\]
	where $g(k)=\sum_jm_j\otimes l_j$ for some weight vectors
	$l_j\in L$ , with $l_j=0$ for almost all $j$.
	The maps $\hat\cdot$ and $\tilde\cdot$
	are indeed inverse to each other, since
	\[
		\tilde{\hat f}(m_i^*\otimes k) = \sum_jm_i^*(m_j)\cdot f(m_j^*\otimes k)
		= f(m_i^*\otimes k),
	\]
	and
	\[
		\hat{\tilde g}(k) = \sum_im_i\otimes \tilde g(m_i^*\otimes k)
		= \sum_{i, j}m_i\otimes \bigl(m_i^*(m_j)\cdot l_j\bigr)
		= \sum_im_i\otimes l_i = g(k),
	\]
	where again $g(k)=\sum_jm_j\otimes l_j$. Hence
	\[
		\Hom_{\frak g}(M^*\otimes K, L)\cong\Hom_{\frak g}(K, M\otimes L),
	\]
	as claimed.
	
	As we saw above, any element $f\in\Hom_{\frak g}(M^*\otimes K, L)$
	is zero on $(M^*\otimes K)^{\nleqslant 0}$, and hence $f$
	factors uniquely through $(M^*\otimes K)^{\leqslant 0}$, so
	\[
		\Hom_{\frak g}\bigl((M^*\otimes K)^{\leqslant 0}, L\bigr)
		\cong \Hom_{\frak g}(M^*\otimes K, L).
	\]
	Also, since $L\in\catO_0$, any element in
	$\Hom_{\frak g}\bigl((M^*\otimes K)^{\leqslant 0}, L\bigr)$
	is zero on any block of $(M^*\otimes K)^{\leqslant 0}$ outside of
	$\catO_0$, so
	\[
		\Hom_{\frak g}\bigl((M^*\otimes K)^{\leqslant 0}, L\bigr)
		\cong
		\Hom_{\frak g}\bigl((M^*\otimes K)^{\leqslant 0}\big\restrict_0, L\bigr).
	\]
	Similarly, since $K\in\catO_0$ we have
	\[
		\Hom_{\frak g}(K, M\otimes L)
		\cong \Hom_{\frak g}(K, M\otimes L\restrict_0).
	\]
	Thus we have obtained a chain of natural isomorphisms
	\begin{align*}
		\Hom_{\frak g}(F_MK, L)
		&= \Hom_{\frak g}\bigl((M^*\otimes K)^{\leqslant 0}\big\restrict_0, L\bigr)
		\cong \Hom_{\frak g}\bigl((M^*\otimes K)^{\leqslant 0}, L\bigr) \\
		&\cong \Hom_{\frak g}(M^*\otimes K, L)
		\cong \Hom_{\frak g}(K, M\otimes L) \\
		&\cong \Hom_{\frak g}(K, M\otimes L\restrict_0)
		=\Hom_{\frak g}(K, G_ML).
	\end{align*}
\end{proof}

\begin{corollary}
	$F$ and $H$ are
	contravariant functors, right and left exact respectively,
	from the category
	$\catO_0$ to the category of endofunctors on $\catO_0$.
\end{corollary}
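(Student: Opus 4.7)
The plan is to read off contravariance and exactness directly from the explicit formulas of Theorem~\ref{thm:gadj}, reducing to properties of the constituent functors established in Section~\ref{sec:notation}.

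First I would extend $F$ to morphisms. Given $\phi\colon M\to N$ in $\catO_0$, the $*$-duality produces $\phi^*\colon N^*\to M^*$ in $\tcatO$; tensoring with $\Id_K$, applying the right exact functor $\lowbar^{\leqslant 0}$, and projecting onto the principal block---each of which is functorial and natural in its input---yields a natural transformation $F_\phi\colon F_N\to F_M$. Preservation of identities and composites is immediate from the explicit construction, and the single reversal of arrows contributed by $\phi\mapsto\phi^*$ makes $F$ contravariant. (Equivalently, one could obtain $F_\phi$ as the mate of $G_\phi$ under the adjunction $F_M\dashv G_M$, which automatically yields a contravariant functor, but the direct route is more convenient for the exactness argument.)

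For right exactness of $F$, I would take a short exact sequence $0\to M'\to M\to M''\to 0$ in $\catO_0$ and trace it through the formula $F_MK=(M^*\otimes K)^{\leqslant 0}\restrict_0$. Exactness of $*$-duality yields $0\to (M'')^*\to M^*\to (M')^*\to 0$ in $\tcatO$; tensoring with $K$ over the ground field preserves exactness; $\lowbar^{\leqslant 0}$ is right exact by the lemma in Section~\ref{sec:notation}; and $\lowbar\restrict_0$ is exact. Composing these, the sequence
\[
	F_{M''}K\longrightarrow F_MK\longrightarrow F_{M'}K\longrightarrow 0
\]
is exact in $\catO_0$ for every $K\in\catO_0$, which is precisely the condition that the contravariant functor $F$ is right exact.

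For $H$, I would invoke the identity $H_M=\star\circ F_{M^\star}\circ\star$ together with the fact that $\star$ is an exact contravariant self-equivalence of $\catO_0$. The composite picks up one contravariant step from the inner $\star$ (sending $M$ to $M^\star$), one from $F$, and one from the outer $\star$ applied to the output, so $H$ is contravariant in $M$. For exactness, the inner $\star$ sends a short exact sequence to an exact sequence with reversed arrows, $F$ turns it into a right exact three-term sequence, and the outer $\star$ reverses this into a left exact three-term sequence, showing that $H$ is left exact. I do not expect any step to be a serious obstacle; the main content of the argument is bookkeeping the (contra)variance and one-sided exactness of each constituent in the two formulas.
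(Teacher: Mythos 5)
Your proposal is correct and follows essentially the same route as the paper: defining $F_\phi N=(\phi^*\otimes\Id_N)^{\leqslant 0}\restrict_0$, reading off contravariance and right exactness of $F_{\lowbar}N$ from the factorisation through $\lowbar^*$, $\lowbar\otimes N$, $\lowbar^{\leqslant 0}$ and $\lowbar\restrict_0$, and deducing the statement for $H$ from $H_M=\star\circ F_{M^\star}\circ\star$ by duality. Your explicit bookkeeping of the variances and one-sided exactness for $H$ just spells out what the paper dispatches with ``follows by duality.''
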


\begin{proof}
	For $M, N\in\catO_0$, we have by Theorem~\ref{thm:gadj} that
	\[
		F_MN = (M^*\otimes N)^{\leqslant 0}\restrict_0.
	\]
	Analogous to the definition of $G$, for a homomorphism
	$\phi\colon M\rightarrow K$ between objects $M, K\in\catO_0$
	we define the corresponding natural transformation
	$F_\phi\colon F_K\rightarrow F_M$ by
	\[
		F_\phi N\defeq (\phi^*\otimes \text{Id}_N)^{\leqslant 0}\restrict_0
			\colon F_KN
			\rightarrow F_MN.
	\]
	Hence, fixing $N\in\catO_0$, and denoting by $F_{\lowbar}N$ the assignment
	\begin{align*}
		F_{\lowbar}M\colon x&\mapsto F_xM
	\end{align*}
	($x$ being an object or morphism of $\catO_0$),
	we see that
	\[
		F_{\lowbar}N =
		(\lowbar\restrict_0)\circ(\lowbar^{\leqslant0})
		\circ(\lowbar\otimes N)\circ(\lowbar^*).
	\]
	Since $\lowbar^*$ is contravariant exact,
	$\lowbar\otimes N$ is covariant exact,
	$\lowbar^{\leqslant0}$ is covariant right exact,
	and $\lowbar\restrict_0$ is covariant exact, it follows
	that $F_{\lowbar}N$ is a contravariant right exact endofunctor on $\catO_0$,
	which proves the statement for $F$. The statement
	for $H$ follows by duality.
\end{proof}

\begin{remark}
	\label{rem:lzero}
	Note that, since $L(0)^*\cong L(0)\cong{}_{\frak g}\bbC$, with $\frak g$ acting
	trivially on $\bbC$, we have isomorphisms
	\begin{align*}
		G_{L(0)}M &= M\otimes L(0)\restrict_0 \cong M\restrict_0=M, \text{ and} \\
		F_{L(0)}M &= \bigl(M\otimes L(0)^*\bigr)^{\leqslant 0}\big\restrict_0
			\cong M^{\leqslant 0}\big\restrict_0 = M
	\end{align*}
	natural in $M$, for any $M\in\catO_0$. Hence we have natural
	isomorphisms
	\[
		G_{L(0)}\cong F_{L(0)}\cong H_{L(0)}\cong \Id,
	\]
	where $\Id$ denotes the identity functor on $\catO_0$.
\end{remark}

\begin{proposition}
\label{prop:acyclic}
	For any $M\in\catO_0$ the following holds.
	\begin{enumerate}[(a)]
		\item $F_M$ and $G_M$ preserve $\calF_0(\Delta)$ and are
			acyclic on it.
		\item $G_M$ and $H_M$ preserve $\calF_0(\nabla)$ and are
			acyclic on it.
	\end{enumerate}
\end{proposition}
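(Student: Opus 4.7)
The plan is to reduce everything to one genuine calculation --- the exactness of $F_M$ on $\calF_0(\Delta)$ --- and to handle the rest by direct appeals to earlier results. For $G_M$ both preservation and acyclicity are essentially free: Proposition~\ref{prop:preservestilt} applied to $M\in\catO_0\subseteq\tcatO$ and $N\in\calF_0(\Delta)\subseteq\tcalF(\Delta)$ gives $M\otimes N\in\tcalF(\Delta)$, and since each Verma module sits in a single block the projection $\lowbar\restrict_0$ preserves Verma flags, so $G_MN\in\calF_0(\Delta)$; acyclicity is immediate from the already observed exactness of $G_M$. Preservation of $\calF_0(\Delta)$ under $F_M$ is Proposition~\ref{prop:fpreservesdelta} with the roles of $M$ and $N$ interchanged, directly yielding $F_MN=(M^*\otimes N)^{\leqslant 0}\restrict_0\in\calF_0(\Delta)$.

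For exactness of $F_M$ on $\calF_0(\Delta)$, my plan is to take a short exact sequence $0\to A\to B\to C\to 0$ in $\calF_0(\Delta)$ and to exploit the fact that, since $C$ is $\enva{\frak n_-}$-free, this sequence splits as a sequence of $\enva{\frak n_-}$-modules, giving an $\enva{\frak n_-}$-decomposition $B\cong A\oplus C$. Tensoring with $M^*$ preserves this decomposition, because the comultiplicative action of $\enva{\frak n_-}$ on a tensor product respects any $\enva{\frak n_-}$-direct sum decomposition in the second factor, and all three of $M^*\otimes A$, $M^*\otimes B$, $M^*\otimes C$ are $\enva{\frak n_-}$-free by Proposition~\ref{prop:preservestilt}. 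By Proposition~\ref{prop:freeproj} the functor $\lowbar^{\leqslant 0}$ applied to an $\enva{\frak n_-}$-free module amounts to discarding the free generators of weight $\nleqslant 0$, an operation that visibly respects the above $\enva{\frak n_-}$-splitting; hence the induced sequence of $\lowbar^{\leqslant 0}$'s is split exact as a sequence of $\enva{\frak n_-}$-modules, so exact as a sequence of vector spaces, and therefore exact as a sequence of $\frak g$-module homomorphisms. Applying the exact projection $\lowbar\restrict_0$ yields the desired short exact sequence $0\to F_MA\to F_MB\to F_MC\to 0$. Acyclicity then follows because $\calF_0(\Delta)$ is closed under the syzygies occurring in a projective resolution (a standard $\Ext$-vanishing argument using dual Verma modules), so $L_iF_M$ vanishes on all of $\calF_0(\Delta)$ for $i\geq 1$.

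Part~(b) will then follow by duality. The $G_M$-statement is immediate from Proposition~\ref{prop:preservestilt} (which also covers $\tcalF(\nabla)$) together with exactness of $G_M$. For $H_M$, Theorem~\ref{thm:gadj} gives $H_M=\star\circ F_{M^\star}\circ\star$, and since $\star$ is an exact contravariant involution interchanging $\calF_0(\Delta)$ with $\calF_0(\nabla)$, the preservation and exactness of $F_{M^\star}$ on $\calF_0(\Delta)$ established in~(a) translate directly into the corresponding statements for $H_M$ on $\calF_0(\nabla)$. The main obstacle throughout is the apparent failure of $\lowbar^{\leqslant 0}$ to be left exact in general; the point is that on the class of $\enva{\frak n_-}$-free modules arising here, the combinatorial description of Proposition~\ref{prop:freeproj} makes $\lowbar^{\leqslant 0}$ well-behaved enough to transport a split short exact sequence to another split short exact sequence, which is what carries the argument through.
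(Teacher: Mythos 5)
Your proposal is correct, and for the main point --- acyclicity of $F_M$ on $\calF_0(\Delta)$ --- it takes a genuinely different route from the paper. The paper reduces to Verma modules by a double induction (on the length of the Verma flag, and over the weight poset using the BGG sequences $0\to K\to P(\lambda)\to\Delta(\lambda)\to 0$), and only for these special sequences does it apply the key computation: all terms of $M^*\otimes(\text{sequence})$ are $\enva{\frak n_-}$-free, so by Proposition~\ref{prop:freeproj} the functor $\lowbar^{\leqslant 0}$ keeps the sequence exact, and comparison with the long exact sequence kills $\calL_1$. You instead prove outright that $F_M$ carries \emph{any} short exact sequence in $\calF_0(\Delta)$ to a short exact sequence, using the $\enva{\frak n_-}$-splitting coming from freeness of the cokernel, and then deduce vanishing of all $\calL_iF_M$ from closure of $\calF_0(\Delta)$ under syzygies plus dimension shifting; the computational core (Proposition~\ref{prop:freeproj} making $\lowbar^{\leqslant 0}$ well-behaved on $\enva{\frak n_-}$-free modules with compatible generators) is the same, but your packaging avoids the weight-poset induction at the cost of invoking the standard $\Ext$-criterion for $\calF(\Delta)$, while the paper's version only needs the classical structure of $P(\lambda)$. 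Two small points to tighten: the freeness of $M^*\otimes A$, $M^*\otimes B$, $M^*\otimes C$ is not Proposition~\ref{prop:preservestilt} (note $M^*\notin\tcatO$, since its support is bounded below) but the Jantzen-type argument recorded in Proposition~\ref{prop:fpreservesdelta}; and you should choose the $\enva{\frak n_-}$-splitting weight-compatibly --- lift the free weight-vector generators of $C$ to weight vectors of $B$ and let $C'$ be their $\enva{\frak n_-}$-span --- so that $M^*\otimes C'$ has a weight-vector free basis and Proposition~\ref{prop:freeproj} can be applied to $M^*\otimes B$ with the adapted basis, which is what makes $(M^*\otimes B)^{\nleqslant 0}\cap(M^*\otimes A)=(M^*\otimes A)^{\nleqslant 0}$ and hence left exactness come out. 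With these refinements your argument goes through.
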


\begin{proof}
	$G_M$ preserves $\calF_0(\Delta)$ and $\calF_0(\nabla)$
	by Proposition~\ref{prop:preservestilt}. $G_M$ is also acyclic on
	$\calF_0(\Delta)$ and $\calF_0(\nabla)$ since $G_M$ is exact.

	$F_M$ preserves $\calF_0(\Delta)$ by Proposition~\ref{prop:fpreservesdelta}.
	If $F_M$ is acyclic on $K$ and $Q$, and the sequence
	\[
		0\rightarrow K\rightarrow N\rightarrow Q\rightarrow 0
	\]
	is exact, then it follows that $F_M$ is acyclic on $N$. Hence
	it suffices to show that $F_M$ is acyclic on Verma modules, by induction
	on the length of Verma flags.
	
	A right exact functor is always acyclic on projective modules, so in
	particular $F_M$ is acyclic on $\Delta(0)$, since $\Delta(0)$ is
	projective. Now let $\lambda\in\frak h^*$ with $\lambda < 0$ and
	$\Delta(\lambda)\in\catO_0$, and assume that $F_M$ is acyclic
	on $\Delta(\mu)$ for all $\mu\in\frak h^*$ with $\lambda < \mu$
	and $\Delta(\mu)\in\catO_0$. All Verma modules fit in a short exact
	sequence
	\begin{equation}
		\label{eq:prop:acyclic}
		0\rightarrow K\rightarrow P(\lambda)\rightarrow\Delta(\lambda)\rightarrow 0,
	\end{equation}
	where $P(\lambda)$ is projective, and $K\in\calF_0(\Delta)$ is
	filtered by Verma modules $\Delta(\mu)$ with $\lambda<\mu$. In particular,
	$F_M$ is acyclic on $K$ by the induction hypothesis. Hence, in the induced
	long exact sequence
	\[
		\dotsb\rightarrow \calL_{i+1}F_MP(\lambda)
		\rightarrow\calL_{i+1}F_M\Delta(\lambda)
		\rightarrow\calL_{i}F_MK
		\rightarrow\dotsb
	\]
	we have $\calL_{i+1}F_MP(\lambda)=0$ since $P(\lambda)$ is projective,
	and $\calL_iF_MK=0$ by the induction hypothesis, so $\calL_{i+1}F_M\Delta(\lambda)=0$
	for all $i>1$. It remains to show that $\calL_1F_M\Delta(\lambda)=0$.
	
	Since $\calL_1F_MP(\lambda)=0$, we have that
	\begin{equation}
		\label{eq:lonewhat}
		0\rightarrow\calL_1F_M\Delta(\lambda)
		\rightarrow F_MK\rightarrow F_MP(\lambda)\rightarrow\Delta(\lambda)\rightarrow 0
	\end{equation}
	is exact. Now, consider the short exact sequence
	\[
		0\rightarrow M^*\otimes K\rightarrow M^*\otimes P(\lambda)
		\rightarrow M^*\otimes \Delta(\lambda)\rightarrow 0
	\]
	obtained from \eqref{eq:prop:acyclic} by applying the functor
	$M^*\otimes \lowbar$. The modules in the above sequence
	are all $\enva{\frak n_-}$-free, so by Proposition~\ref{prop:freeproj}
	we obtain an exact sequence
	\begin{equation}
		\label{eq:lonezero}
		0\rightarrow F_MK\rightarrow F_MP(\lambda)\rightarrow F_M\Delta(\lambda)\rightarrow 0
	\end{equation}
	by applying $\lowbar^{\leqslant 0}$, and thus
	$\calL_1F_M\Delta(\lambda)=0$, by comparing~\eqref{eq:lonewhat} and~\eqref{eq:lonezero}.
	
	Since $H_M=\star\circ F_{M^\star}\circ\star$, and $\star$ is
	a contravariant exact functor swapping $\calF_0(\nabla)$ with
	$\calF_0(\Delta)$, it follows by the dual argument to the previous
	paragraph that $H_M$ preserves $\calF_0(\nabla)$ and is acyclic
	on it.
\end{proof}

\begin{lemma}
	The functors $F$, $G$ and $H$ are faithful.
\end{lemma}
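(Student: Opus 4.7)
The plan is to prove faithfulness of $G$ directly, then bootstrap faithfulness of $F$ and $H$ from it using the adjunctions $F_M\dashv G_M\dashv H_M$ and the duality $\star$ respectively.

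For $G$: given a nonzero morphism $\phi\colon M\to N$ in $\catO_0$, it suffices to find some $K\in\catO_0$ for which $G_\phi K\neq 0$, and I would take $K=L(0)$. The trivial representation $L(0)\cong\bbC$ satisfies $M\otimes L(0)\cong M$ canonically via $m\otimes 1\mapsto m$, and since $M\in\catO_0$ this isomorphism already takes place inside $\catO_0$, so the projection $\lowbar\restrict_0$ acts as the identity. Hence $G_ML(0)\cong M$ naturally in $M$, and under this identification $G_\phi L(0)=(\phi\otimes\Id_{L(0)})\restrict_0$ is simply $\phi$ itself, which is nonzero by assumption. In particular $G_\phi\neq 0$.

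For $F$, I would appeal to the adjunction $F_M\dashv G_M$ established in Theorem~\ref{thm:gadj}. The hom-space isomorphism $\Hom_{\frak g}(F_MK, L)\cong\Hom_{\frak g}(K, G_ML)$ should be natural in $M$ in the following sense: for $\phi\colon M\to N$ the square whose horizontal arrows are the adjunction isomorphisms (for $M$ on top, for $N$ on the bottom) and whose vertical arrows are precomposition with $F_\phi K$ on the left and post-composition with $G_\phi L$ on the right commutes. Granting this, two applications of Yoneda (first in $L$, then in $K$) yield $F_\phi=0\iff G_\phi=0$, so faithfulness of $G$ forces faithfulness of $F$. Verifying the naturality in $M$, i.e.\ that the mate of $G_\phi$ really is $F_\phi$ (and not merely some other natural transformation), is the main technical step: it is a direct unpacking of the explicit isomorphism constructed in the proof of Theorem~\ref{thm:gadj} using the weight bases $\{m_i\}$ and $\{m_i^*\}$, and is the only essentially computational part of the argument.

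Finally for $H$, the identity $H_M=\star\circ F_{M^\star}\circ\star$ realises $H_\phi$ as $\star\circ F_{\phi^\star}\circ\star$. Since $\star$ is an involutive autoequivalence of $\catO_0$, conjugation by $\star$ is an autoequivalence of $\End(\catO_0)$, so $H_\phi\neq 0\iff F_{\phi^\star}\neq 0$; moreover $\phi\neq 0\iff\phi^\star\neq 0$ because $\star$ is an equivalence. Thus faithfulness of $F$ from the previous paragraph immediately yields faithfulness of $H$.
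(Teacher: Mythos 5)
Your argument for $G$ is exactly the paper's: evaluate $G_\phi$ at $L(0)$ and use $G_{\lowbar}L(0)\cong\Id$. For $F$, however, you take a genuinely different route. The paper argues directly: it picks a lowest weight vector $m^*\neq 0$ in the image of $\phi^*\colon N^*\to M^*$, takes the antidominant Verma module $\Delta(\lambda)$ with highest weight vector $v$, and uses Proposition~\ref{prop:freeproj} to see that $m^*\otimes v$ and $n^*\otimes v$ survive the truncation $\lowbar^{\leqslant 0}\restrict_0$, so that $F_\phi\Delta(\lambda)$ sends $\overline{n^*\otimes v}$ to $\overline{m^*\otimes v}\neq 0$; this has the virtue of exhibiting a concrete object on which $F_\phi$ is nonzero. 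You instead transfer faithfulness of $G$ across the adjunction $F_M\dashv G_M$ via the mate correspondence, which is cleaner and more formal, but its entire weight rests on the point you correctly flag: that the explicitly defined $F_\phi N=(\phi^*\otimes\Id_N)^{\leqslant 0}\restrict_0$ really is the mate of $G_\phi$, i.e.\ that the isomorphisms of Theorem~\ref{thm:gadj} are natural in $M$. You do not carry this out, but it is not a gap in substance: with weight bases $\{m_i\}$ of $M$, $\{n_j\}$ of $N$ and $\phi(m_i)=\sum_j c_{ji}n_j$, one has $\phi^*(n_j^*)=\sum_i c_{ji}m_i^*$, whence
\[
	\sum_j n_j\otimes f\bigl(\phi^*(n_j^*)\otimes k\bigr)
	= \sum_i \phi(m_i)\otimes f(m_i^*\otimes k),
\]
which is precisely the required commutativity of the square relating precomposition with $F_\phi K$ to postcomposition with $G_\phi L$; evaluating at identity morphisms then gives $F_\phi=0\iff G_\phi=0$ as you say. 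Your treatment of $H$ via $H_\phi=\star\circ F_{\phi^\star}\circ\star$ coincides with the paper's appeal to duality. So the proposal is correct; it trades the paper's explicit element computation on $\Delta(\lambda)$ for a formal adjunction argument plus one routine dual-basis verification.
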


\begin{proof}
	Let $M, N\in\catO_0$ with a non-zero
	homomorphism $\phi\colon M\rightarrow N$. By the symmetry of the
	tensor product, we have	$G_{\lowbar}K\cong G_K\lowbar$ for any $K\in\catO_0$.
	In particular, it follows from Remark~\ref{rem:lzero}
	that $G_{\lowbar}L(0)=\Id$. Thus
	$G_{\phi}L(0)=(\phi\otimes \text{Id}_{L(0)})\restrict_0\neq 0$,
	so $G_{\phi}$ is non-zero and hence $G$ is faithful.
	
	Now, let $m^*\in M^*$, $m^*\neq 0$ be a lowest weight vector
	of weight $\mu\in\frak h^*$
	in the image of the map $\phi^*\colon N^*\rightarrow M^*$, and let
	$n^*\in N^*$ with $\phi^*(n^*)=m^*$. Let $\lambda\in\frak h^*$
	be the antidominant weight, i.e. with $L(\lambda)=\Delta(\lambda)\in\catO_0$,
	and consider
	$F_{\phi}\Delta(\lambda)\colon F_{N}\Delta(\lambda)\rightarrow F_M\Delta(\lambda)$.
	Let $v\in\Delta(\lambda)$ denote a non-zero highest weight vector of $\Delta(\lambda)$.
	
	Since $\mu$ is a lowest weight of $\phi^*(N^*)$ and $N\in\catO_0$,
	it follows that $\lambda+\mu\leqslant 0$ and $\Delta(\lambda+\mu)\in\catO_0$.
	In particular, by the proof of Proposition~\ref{prop:freeproj},
	both $n^*\otimes v$ and $m^*\otimes v$ represent non-zero
	elements $\overline{n^*\otimes v}$ and
	$\overline{m^*\otimes v}$ in
	\[
		(N^*\otimes \Delta(\lambda))^{\leqslant0}\big\restrict_0 = F_N\Delta(\lambda)
	\]
	and
	\[
		(M^*\otimes \Delta(\lambda))^{\leqslant0}\big\restrict_0=F_M\Delta(\lambda),
	\]
	respectively. In particular, since
	\[
		F_{\phi}\Delta(\lambda) =
		(\phi^*\otimes \Id_{\Delta(\lambda)})^{\leqslant 0}\big\restrict_0
	\]
	we see that
	\[
		\bigl(F_{\phi}\Delta(\lambda)\bigr)(\overline{n^*\otimes v})
		= \overline{\phi^*(n^*)\otimes v}
		= \overline{m^*\otimes v}\neq 0.
	\]
	Hence $F_{\phi}$ is non-zero, proving
	that $F$ is faithful. By duality, it follows that $H$ is faithful.
\end{proof}

We now conclude the proof of Theorem~\ref{thm:maintheorem} by
showing a slightly stronger statement than ``$X_M\cong X_N$
if and only if $M\cong N$''.

\begin{proposition}
	Let $M, N\in\catO_0$ with $M\ncong N$. Then
	\begin{enumerate}[(a)]
		\item $F_M\vert_\text{proj}\ncong F_N\vert_\text{proj}$,
		\item $G_M\vert_\text{tilt}\ncong G_N\vert_\text{tilt}$, and
		\item $H_M\vert_\text{inj}\ncong H_N\vert_\text{inj}$,
	\end{enumerate}
	where $\vert_\text{proj}$, $\vert_\text{tilt}$ and
	$\vert_\text{inj}$ denote the restrictions to the additive
	categories of projective,
	tilting and injective modules, respectively.
\end{proposition}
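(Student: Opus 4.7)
The plan is to reduce all three parts to the single assertion $G_M\cong G_N$ as endofunctors of $\catO_0$, and then apply Remark~\ref{rem:lzero} (which gives $G_M L(0)\cong M$ for all $M\in\catO_0$) to conclude $M\cong N$.

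For part (a) I would use that $F_M$, being the left adjoint of $G_M$, is covariant and right exact. Since $\catO_0$ has enough projectives, any right exact additive endofunctor is determined, up to canonical natural isomorphism, by its restriction to the additive subcategory of projectives: one reads off $F_MX$ from a projective presentation $P_1\to P_0\to X\to 0$ as the cokernel of $F_MP_1\to F_MP_0$, and functoriality in $X$ is forced by this recipe. Thus an isomorphism $F_M|_{\text{proj}}\cong F_N|_{\text{proj}}$ extends uniquely to an isomorphism $F_M\cong F_N$ of right exact functors, and by uniqueness of right adjoints we obtain $G_M\cong G_N$. Part (c) is strictly dual: $H_M$ is covariant left exact and is determined by its restriction to injectives via injective copresentations, so $H_M|_{\text{inj}}\cong H_N|_{\text{inj}}$ gives $H_M\cong H_N$ and hence $G_M\cong G_N$ by uniqueness of left adjoints.

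For part (b) one must propagate the isomorphism $G_M|_{\text{tilt}}\cong G_N|_{\text{tilt}}$ through the structural hierarchy of $\catO_0$. The key external input is Ringel's theorem that the characteristic tilting modules are $\text{Ext}$-injective in $\calF_0(\Delta)$, so that every Verma module $\Delta(\lambda)\in\catO_0$ admits a finite right coresolution $0\to\Delta(\lambda)\to T^0\to\cdots\to T^k\to 0$ by tilting modules. Applying the exact functors $G_M$ and $G_N$ to such a coresolution and invoking the Five Lemma yields isomorphisms $G_M\Delta(\lambda)\cong G_N\Delta(\lambda)$, natural in $\lambda$. Iterating along the Verma filtration of an arbitrary projective $P\in\catO_0$ and using exactness extends this to $G_M|_{\text{proj}}\cong G_N|_{\text{proj}}$, after which the argument of part (a) produces a global natural isomorphism $G_M\cong G_N$.

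The main obstacle lies in the bookkeeping for part (b): one has to verify that the natural transformations constructed through tilting coresolutions, Verma filtrations and projective presentations are well-defined, independent of choices, and natural with respect to arbitrary morphisms in $\catO_0$. This is the standard derived-functor-style diagram chase, and it is the only place where the non-elementary input from Ringel's theory of characteristic tilting modules enters. Parts (a) and (c), by contrast, require only the abstract machinery of adjunctions together with the fact (already established) that $G_{L(0)}\cong\Id$.
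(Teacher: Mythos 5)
Your parts (a) and (c) coincide with the paper's own argument: reduce to $F_M\cong F_N$ (resp.\ $H_M\cong H_N$) via projective presentations (resp.\ injective copresentations), pass to $G_M\cong G_N$ by uniqueness of adjoints, and conclude $M\cong G_ML(0)\cong G_NL(0)\cong N$. The difference, and the problem, is in part (b). The paper applies Ringel's tilting coresolution directly to the projective module $P$ itself: since $P$ has a Verma flag, $P\in\calF_0(\Delta)$, so there is an exact sequence $0\to P\to T_0\to\cdots\to T_k\to 0$ with all $T_i$ tilting; applying the exact functors $G_M$ and $G_N$, using the given natural isomorphism on the additive category of tilting modules, and invoking the Five Lemma yields a natural isomorphism $G_MP\cong G_NP$, after which one finishes as in (a). You instead coresolve only the Verma modules and then claim to ``iterate along the Verma filtration'' of $P$.

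That iteration step is a genuine gap. Knowing $G_MK\cong G_NK$ and $G_M\Delta(\lambda)\cong G_N\Delta(\lambda)$ for a short exact sequence $0\to K\to P_i\to\Delta(\lambda)\to 0$ does not produce an isomorphism $G_MP_i\cong G_NP_i$: the Five Lemma presupposes a map on the middle term making the diagram commute, and no such map is available, because your comparison isomorphisms are only defined on Verma (and tilting) modules, while the inclusion $K\hookrightarrow P_i$ is not a morphism in either of those subcategories. In general an extension is not determined by its sub and quotient, so ``exactness plus isomorphisms on the subquotients'' proves at most equality in the Grothendieck group, not the required natural isomorphism on the additive category of projectives. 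The repair is exactly the paper's move: drop the detour through Verma modules and apply the tilting coresolution argument to $P$ directly, which your own appeal to Ringel's theorem already licenses, since it provides coresolutions for all of $\calF_0(\Delta)$ and not merely for Verma modules. With that change, the remaining bookkeeping you describe (well-definedness and naturality of the induced isomorphism on projectives) is indeed routine, and the rest of your reduction goes through.
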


\begin{proof}
	We start by noting that if $G_M\cong G_N$, then
	\begin{equation}
		\label{eq:gmcgn}
		M \cong G_ML(0)\cong G_NL(0)\cong N.
	\end{equation}
	Assume that $F_M\vert_\text{proj}\cong F_N\vert_\text{proj}$.
	Since $F_M$ and $F_N$ are right exact, it follows
	by taking projective presentations that
	$F_MK\cong F_NK$ for any $K\in\catO_0$, i.e. $F_N\cong F_M$.
	By the uniqueness of right adjoints, this implies that
	$G_M\cong G_N$ so $M\cong N$ by \eqref{eq:gmcgn},
	and hence we have proved part (a). Part (c) follows from
	(a) by duality (as in the proof of Proposition~\ref{prop:acyclic}).
	
	For part (b), assume that $G_M\vert_\text{tilt}\cong G_N\vert_\text{tilt}$.
	We recall that each projective module $P\in\catO_0$
	has a tilting co-resolution, i.e. there are tilting modules
	$T_0, \dotsc, T_k\in\catO_0$ such that the sequence
	\[
		0\rightarrow P\rightarrow T_0\rightarrow\cdots\rightarrow T_k\rightarrow 0
	\]
	is exact (for details, see \cite[Lemma~6]{ringel}).
	Since $G_N$ and $G_M$ are exact and agree on the additive category of tilting
	modules, this induces the following commutative diagram with exact
	rows.
	\[
		{\setlength{\arraycolsep}{2pt}
		\begin{array}{ccccccccccc}
			0 & \rightarrow & G_MP & \rightarrow & G_MT_0 &
				\rightarrow & \cdots & \rightarrow & G_MT_k & \rightarrow & 0 \\
			& & & & \rotatebox[origin=c]{270}{$\cong$}
			& & & & \rotatebox[origin=c]{270}{$\cong$} \\
			0 & \rightarrow & G_NP & \rightarrow & G_NT_0 &
				\rightarrow & \cdots & \rightarrow & G_NT_k & \rightarrow & 0
		\end{array}}
	\]
	By the Five Lemma this induces an isomorphism
	$G_MP\cong G_NP$, which furthermore is natural, since all isomorphisms in the
	above diagram are natural. Hence $G_M$ and $G_N$ are naturally equivalent
	on projective modules,
	so by the right exactness $G_M\cong G_N$ as in the proof of part (a).
	By \eqref{eq:gmcgn} we have $M\cong N$, as required.
\end{proof}

\begin{proposition}
	\label{prop:mdnnprojtiltinj}
	For all $M\in\calF_0(\Delta)$, $N\in\calF_0(\nabla)$ we have that
	\begin{enumerate}[(a)]
		\item $F_NM$ is projective,
		\item $G_MN\cong G_NM$ is a tilting module, and
		\item $H_MN$ is injective.
	\end{enumerate}
\end{proposition}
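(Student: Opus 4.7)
My plan is to prove all three parts by combining the adjunctions from Theorem~\ref{thm:gadj} with Proposition~\ref{prop:preservestilt} and the standard vanishing $\operatorname{Ext}^1(X, Y) = 0$ for $X \in \calF_0(\Delta)$ and $Y \in \calF_0(\nabla)$.

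For (b) I will first observe that the flip $m \otimes n \mapsto n \otimes m$ is a natural $\frak g$-module isomorphism $M \otimes N \cong N \otimes M$, which descends to $G_M N \cong G_N M$ after projecting to $\catO_0$. To see $G_M N$ is tilting I will apply Proposition~\ref{prop:preservestilt} twice: once viewing $M$ as a $\tcatO$-object and $N$ as a $\tcalF(\nabla)$-object, which gives $M \otimes N \in \tcalF(\nabla)$; and once with the roles reversed, giving $M \otimes N \in \tcalF(\Delta)$. Hence $M \otimes N \in \tcalT$, and its $\catO_0$-block $G_M N$ lies in $\calT_0$.

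The heart of (a) is the following observation: by Proposition~\ref{prop:preservestilt} with $L \in \tcatO$ and $N \in \tcalF(\nabla)$, $L \otimes N \in \tcalF(\nabla)$, and so $G_N L = (L \otimes N)\restrict_0 \in \calF_0(\nabla)$ for \emph{every} $L \in \catO_0$. Since $M \in \calF_0(\Delta)$, this forces $\operatorname{Ext}^1(M, G_N L) = 0$. Combining this vanishing with the adjunction $\Hom(F_N M, L) \cong \Hom(M, G_N L)$ from Theorem~\ref{thm:gadj} and the exactness of $G_N$, the functor $\Hom(F_N M, \lowbar)$ carries short exact sequences in $\catO_0$ to short exact sequences, so $F_N M$ is projective.

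For (c) I will run the symmetric argument using the adjunction $G_M \dashv H_M$ together with the parallel fact $G_M L \in \calF_0(\Delta)$ for every $L \in \catO_0$; this gives $\operatorname{Ext}^1(G_M L, N) = 0$, hence exactness of $\Hom(\lowbar, H_M N) \cong \Hom(G_M \lowbar, N)$, so $H_M N$ is injective. Alternatively (c) follows formally from (a) via the duality $H_M = \star \circ F_{M^\star} \circ \star$ of \eqref{eq:fmdef}, since $\star$ exchanges $\calF_0(\Delta)$ with $\calF_0(\nabla)$ and projective modules with injective modules. I do not anticipate a substantive obstacle; the only routine check needed is that a $\catO_0$-block summand of a $\tcalF(\nabla)$-object again lies in $\calF_0(\nabla)$, which follows from the standard closure of these filtration categories under direct summands.
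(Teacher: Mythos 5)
Your proposal is correct and follows essentially the same route as the paper: the adjunction $\Hom(F_NM,\lowbar)\cong\Hom(M,G_N\lowbar)$ from Theorem~\ref{thm:gadj}, exactness of $G_N$ together with Proposition~\ref{prop:preservestilt} to land in $\calF_0(\nabla)$, and the vanishing of $\operatorname{Ext}^1$ between $\Delta$-filtered and $\nabla$-filtered modules (the paper phrases this as acyclicity of $\Hom(M,\lowbar)$ on $\calF(\nabla)$, citing Ringel), with (c) by duality and (b) directly from Proposition~\ref{prop:preservestilt}.
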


\begin{proof}
	For part (a), we need to show that $\Hom(F_NM, \lowbar)$ is exact. Since
	\[
		\Hom(F_NM, \lowbar) \cong \Hom(M, G_N\lowbar),
	\]
	it is equivalent to show that $\Hom(M, G_N\lowbar)$ is exact.
	By Proposition~\ref{prop:preservestilt}, $G_N\lowbar$ maps any module
	to a module with a dual Verma flag, since $N\in\calF(\nabla)$.
	Hence, as $G_N\lowbar$ is exact,
	it maps an exact sequence to an exact sequence of modules in $\calF(\nabla)$.
	Finally, $\Hom(M, \lowbar)$ is acyclic on $\calF(\nabla)$
	since $M\in\calF(\Delta)$ (see \cite[Corollary~2]{ringel}), so
	applying $\Hom(M, \lowbar)$ to an exact sequence of modules in
	$\calF(\nabla)$ again yields an
	exact sequence, i.e. $\Hom(M, G_N\lowbar)$ is exact.	

	Part (c) follows from (a) by duality. Finally, part (b) follows
	directly from Proposition~\ref{prop:preservestilt}.
\end{proof}

\begin{corollary}
	For all $M\in\calT_0$, $F_M$ maps tilting modules to projective modules,
	and $H_M$ maps tilting modules to injective modules.
\end{corollary}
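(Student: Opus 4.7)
The plan is that this corollary is essentially an immediate specialisation of Proposition~\ref{prop:mdnnprojtiltinj}, using the defining property $\calT_0 = \calF_0(\Delta) \cap \calF_0(\nabla)$ of tilting modules. No new machinery should be required; the only thing to do is match variables correctly and observe that a tilting module plays both roles available in the proposition.

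More concretely, given $M \in \calT_0$ and an arbitrary tilting module $T \in \calT_0$, I would argue as follows. Since $\calT_0 \subseteq \calF_0(\nabla)$ we have $M \in \calF_0(\nabla)$, and since $\calT_0 \subseteq \calF_0(\Delta)$ we have $T \in \calF_0(\Delta)$. Applying Proposition~\ref{prop:mdnnprojtiltinj}(a) with its ``$N$'' taken to be our $M$ and its ``$M$'' taken to be our $T$, we conclude that $F_M T$ is projective. This handles the first claim.

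For the second claim, I would again invoke the same proposition, but now part~(c). This time $M \in \calT_0 \subseteq \calF_0(\Delta)$ plays the role of the ``$M$'' of the proposition, while $T \in \calT_0 \subseteq \calF_0(\nabla)$ plays the role of ``$N$''; part~(c) then yields that $H_M T$ is injective. Alternatively, one could derive the injective case from the projective case via the duality identity $H_M \cong \star \circ F_{M^\star} \circ \star$ from Theorem~\ref{thm:gadj} together with the fact that $\star$ preserves $\calT_0$ and interchanges projectives with injectives. Either route is short and free of obstacles, since all the heavy lifting (showing $F_N M$ is projective and $H_M N$ is injective for modules with Verma/dual-Verma flags) has already been done in Proposition~\ref{prop:mdnnprojtiltinj}.
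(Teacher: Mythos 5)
Your proposal is correct and matches the paper's (implicit) argument: the corollary is stated without proof precisely because it is the immediate specialisation of Proposition~\ref{prop:mdnnprojtiltinj}(a) and (c) obtained by using $\calT_0=\calF_0(\Delta)\cap\calF_0(\nabla)$, with the variable roles assigned exactly as you describe. Your alternative derivation of the injective case via $H_M\cong\star\circ F_{M^\star}\circ\star$ is also fine, but adds nothing beyond the direct application of part (c).
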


In general it is quite difficult to compute $F_MN$ and $H_MN$, but the following
is a nice special case.

\begin{proposition}
	\label{prop:fnd}
	For each $\lambda\in\frak h^*$ with $\Delta(\lambda)\in\catO_0$
	we have
	\begin{align*}
		F_{\nabla(\lambda)}\Delta(\lambda) &\cong \Delta(0), \text{ and} \\
		H_{\Delta(\lambda)}\nabla(\lambda) &\cong \nabla(0).
	\end{align*}
\end{proposition}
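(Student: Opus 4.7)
The plan is to compute $F_{\nabla(\lambda)}\Delta(\lambda)$ explicitly from the formula of Theorem~\ref{thm:gadj} and then derive the $H$-statement by duality. Since $H_M = \star\circ F_{M^\star}\circ\star$, $\nabla(\lambda)^\star\cong\Delta(\lambda)$, $\Delta(\lambda)^\star\cong\nabla(\lambda)$, and $\Delta(0)^\star\cong\nabla(0)$, once $F_{\nabla(\lambda)}\Delta(\lambda)\cong\Delta(0)$ is established the second isomorphism follows formally:
\[
H_{\Delta(\lambda)}\nabla(\lambda)\cong\bigl(F_{\Delta(\lambda)^\star}\nabla(\lambda)^\star\bigr)^\star\cong\bigl(F_{\nabla(\lambda)}\Delta(\lambda)\bigr)^\star\cong\nabla(0).
\]

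For the main computation, Theorem~\ref{thm:gadj} yields
\[
F_{\nabla(\lambda)}\Delta(\lambda) = \bigl(\Delta(\lambda)\otimes\nabla(\lambda)^*\bigr)^{\leqslant 0}\big\restrict_0.
\]
Following the argument in the proof of Proposition~\ref{prop:fpreservesdelta}, if $v_\lambda$ is a highest weight vector of $\Delta(\lambda)$ and $\{n_j\}_j$ is a weight basis of $\nabla(\lambda)$, then $\Delta(\lambda)\otimes\nabla(\lambda)^*$ is $\enva{\frak n_-}$-free on the set $\{v_\lambda\otimes n_j^*\}_j$. Proposition~\ref{prop:freeproj} then describes $(\Delta(\lambda)\otimes\nabla(\lambda)^*)^{\leqslant 0}$ as $\enva{\frak n_-}$-free on the sub-basis of those generators of weight $\leqslant 0$. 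A short weight count shows $\weight(v_\lambda\otimes n_j^*) = \lambda-\weight(n_j)\leqslant 0$ forces $\weight(n_j)\geqslant\lambda$; combined with $\weight(n_j)\in\Supp\nabla(\lambda)\subseteq\{\mu:\mu\leqslant\lambda\}$, only $\weight(n_j)=\lambda$ survives, and since $\dim\nabla(\lambda)_\lambda=1$ there is a unique contributing generator $\bar v\defeq\overline{v_\lambda\otimes n_\lambda^*}$, of weight $0$.

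To identify this $\enva{\frak n_-}$-free module of rank one with $\Delta(0)$, the remaining step is to verify that $\bar v$ is $\frak n_+$-invariant. For $e_\alpha\in\frak n_+$ with $\alpha\in\roots_+$, the Leibniz rule and $e_\alpha v_\lambda = 0$ give
\[
e_\alpha(v_\lambda\otimes n_\lambda^*) = (e_\alpha v_\lambda)\otimes n_\lambda^* + v_\lambda\otimes(e_\alpha n_\lambda^*) = v_\lambda\otimes(e_\alpha n_\lambda^*),
\]
a vector of weight $\lambda + (\alpha-\lambda) = \alpha\not\leqslant 0$, which therefore lies in $(\Delta(\lambda)\otimes\nabla(\lambda)^*)^{\nleqslant 0}$ and vanishes in the quotient. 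Hence $\bar v$ is a weight-$0$ highest weight vector freely generating the module over $\enva{\frak n_-}$, so the module is isomorphic to $\Delta(0)$; since $\Delta(0)\in\catO_0$, the block projection $\restrict_0$ acts as the identity, yielding $F_{\nabla(\lambda)}\Delta(\lambda)\cong\Delta(0)$. The only non-routine point is the $\frak n_+$-invariance check in the quotient, but as shown it reduces to the single observation $\alpha\not\leqslant 0$, so no serious obstacle is anticipated.
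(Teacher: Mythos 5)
Your proposal is correct, but it proves the first isomorphism by a genuinely different route than the paper. The paper argues ``softly'': by adjunction, $\Hom\bigl(F_{\nabla(\lambda)}\Delta(\lambda),L(\mu)\bigr)\cong\Hom\bigl(\Delta(\lambda),G_{\nabla(\lambda)}L(\mu)\bigr)$ vanishes for $\mu<0$ (weight space argument in degree $\lambda$) and is one-dimensional for $\mu=0$, so $F_{\nabla(\lambda)}\Delta(\lambda)$ has simple top $L(0)$; since it is projective by Proposition~\ref{prop:mdnnprojtiltinj}(a), it must be $P(0)=\Delta(0)$, and the $H$-statement follows by duality exactly as you argue. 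You instead compute $(\nabla(\lambda)^*\otimes\Delta(\lambda))^{\leqslant 0}$ directly from Theorem~\ref{thm:gadj}: the $\enva{\frak n_-}$-freeness of $\Delta(\lambda)\otimes\nabla(\lambda)^*$ on $\{v_\lambda\otimes n_j^*\}$ (the Jantzen-type argument underlying Proposition~\ref{prop:fpreservesdelta}) together with Proposition~\ref{prop:freeproj} leaves a single free generator $\overline{v_\lambda\otimes n_\lambda^*}$ of weight $0$, whose $\frak n_+$-invariance in the quotient follows from the weight observation $\alpha\not\leqslant 0$; the universal property of $\Delta(0)$ plus rank-one freeness then gives the isomorphism, and the weight check and dual-weight convention ($\weight(n_j^*)=-\weight(n_j)$) are handled correctly. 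The paper's argument is shorter and leverages the general projectivity result, while yours is more explicit and self-contained: it does not need Proposition~\ref{prop:mdnnprojtiltinj} or the identification $P(0)=\Delta(0)$, it exhibits the generator of the image module concretely, and it is in the same spirit as the explicit Verma-flag computations the paper performs later for $\fsl_3$ in Section~\ref{sec:example}. The only step you might spell out slightly more is the final identification: surjectivity of $\Delta(0)\to(\nabla(\lambda)^*\otimes\Delta(\lambda))^{\leqslant 0}$ comes from $\bar v$ generating over $\enva{\frak n_-}$, and injectivity from freeness of $\bar v$ over $\enva{\frak n_-}$; as stated this is implicit but standard, and not a gap.
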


\begin{proof}
	Let $\mu\in\frak h^*$ be such that $\mu<0$ and $L(\mu)\in\catO_0$.
	Since $\mu<0$ it follows that
	$\bigl(G_{\nabla(\lambda)}L(\mu)\bigr)_\lambda=\set{0}$, so
	\[
		\dim\Hom\bigl(F_{\nabla(\lambda)}\Delta(\lambda), L(\mu)\bigr) \cong
		\dim\Hom\bigl(\Delta(\lambda), G_{\nabla(\lambda)}L(\mu)) = 0.
	\]
	On the other hand, we have
	\begin{align*}
		\dim\Hom\bigl(F_{\nabla(\lambda)}\Delta(\lambda), L(0)\bigr)
		&\cong \dim\Hom\bigl(\Delta(\lambda), G_{\nabla(\lambda)}L(0)\bigr) \\
		&\cong \dim\Hom\bigl(\Delta(\lambda), \nabla(\lambda)\bigr) \\
		&= 1,
	\end{align*}
	so $F_{\nabla(\lambda)}\Delta(\lambda)$ has simple top $L(0)$.
	By Proposition~\ref{prop:mdnnprojtiltinj}
	$F_{\nabla(\lambda)}\Delta(\lambda)$ is projective, and hence
	\[
		F_{\nabla(\lambda)}\Delta(\lambda)\cong\Delta(0).
	\]
	The second statement follows by duality.
\end{proof}

\begin{proposition}
	\label{prop:nattrans}
	There are natural transformations
	\begin{enumerate}[(a)]
		\item $G_{\Delta(0)}\epi \Id$, $\Id\hookrightarrow G_{\nabla(0)}$,
		\item $\Id\hookrightarrow H_{\Delta(0)}$, $F_{\nabla(0)}\epi\Id$.
	\end{enumerate}
\end{proposition}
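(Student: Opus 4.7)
The strategy is to obtain all four natural transformations by functoriality: apply the covariant functor $G$ and the contravariant functors $F$ and $H$ (see the corollary after Theorem~\ref{thm:gadj}) to the two canonical morphisms in $\catO_0$, namely the projection $p\colon\Delta(0)\epi L(0)$ onto the simple top and the inclusion $i\colon L(0)\hookrightarrow\nabla(0)$ of the simple socle, and then identify $G_{L(0)}$, $F_{L(0)}$, $H_{L(0)}$ with the identity functor via Remark~\ref{rem:lzero}.

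For part~(a), the morphism $p$ induces a natural transformation $G_p\colon G_{\Delta(0)}\to G_{L(0)}\cong\Id$ whose component at $N\in\catO_0$ is $(p\otimes\Id_N)\restrict_0$. Since tensoring over $\bbC$ is exact and $\lowbar\restrict_0$ is exact, both preserve the surjectivity of $p$, so this component is surjective. Dually, $i$ induces $G_i\colon\Id\cong G_{L(0)}\to G_{\nabla(0)}$ with components $(i\otimes\Id_N)\restrict_0$, which are injective by the same exactness considerations.

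For part~(b), apply the contravariant functor $F$ to $i$ to obtain $F_i\colon F_{\nabla(0)}\to F_{L(0)}\cong\Id$ with component $(i^*\otimes\Id_N)^{\leqslant 0}\restrict_0$ at $N$. Since $\lowbar^*$ is contravariant exact, $i^*\colon\nabla(0)^*\epi L(0)^*\cong L(0)$ is surjective; tensoring with $N$, then applying the right exact functor $\lowbar^{\leqslant 0}$, and finally the exact projection $\lowbar\restrict_0$, preserves this surjectivity, so $(F_i)_N$ is surjective. The remaining natural transformation $\Id\hookrightarrow H_{\Delta(0)}$ is then obtained from $F_{\nabla(0)}\epi\Id$ by $\star$-duality: by~\eqref{eq:fmdef} together with $\Delta(0)^\star=\nabla(0)$ we have $H_{\Delta(0)}=\star\circ F_{\nabla(0)}\circ\star$, so applying the contravariant exact functor $\star$ componentwise to the surjection just constructed produces the desired injection.

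The only point requiring care is the behaviour of $\lowbar^{\leqslant 0}$, which is only right exact and therefore preserves epimorphisms but not necessarily monomorphisms. This is precisely why the two ``epic'' natural transformations are constructed directly from $p$ and $i$, while $\Id\hookrightarrow H_{\Delta(0)}$ is obtained via $\star$-duality from $F_{\nabla(0)}\epi\Id$ rather than directly from~$p$.
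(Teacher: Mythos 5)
Your proof is correct and follows essentially the same route as the paper: apply $G$, $F$ (and $H$) to the canonical maps $\Delta(0)\epi L(0)$ and $L(0)\hookrightarrow\nabla(0)$, identify $F_{L(0)}\cong G_{L(0)}\cong H_{L(0)}\cong\Id$ via Remark~\ref{rem:lzero}, and use the exactness properties of these functors. Your derivation of $\Id\hookrightarrow H_{\Delta(0)}$ by applying $\star$ to $F_{\nabla(0)}\epi\Id$ is just an unfolding of the paper's appeal to the left exactness of the contravariant functor $H$ (which the paper itself obtains from $F$ by $\star$-duality), so it is the same argument in slightly more explicit form.
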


\begin{proof}
	Since $F_{L(0)}\cong G_{L(0)}\cong H_{L(0)}\cong\Id$, together
	with the fact that $F$ is right exact, $G$ is exact
	and $H$ is left exact,
	this follows by applying the functors $F$, $G$ and $H$ to
	the canonical homomorphisms $\Delta(0)\epi L(0)$ and
	$L(0)\hookrightarrow\nabla(0)$.
\end{proof}

\section{(Co-)Monad structures}
\label{sec:comonad}

We briefly recall the definition of a monad and a comonad (sometimes called
triple and cotriple, respectively), for details see~\cite{maclane, weibel}.
A monad $(\mho, \nabla, \eta)$ on a category $\calC$ is
an endofunctor $\mho\colon\calC\rightarrow\calC$ together with
two natural transformations $\nabla\colon \mho^2\rightarrow \mho$ and
$\eta\colon \Id\rightarrow \mho$ such that the diagrams
\newsavebox{\midalignbox}%
\newcommand{\midalign}[1]{\savebox{\midalignbox}{#1}%
	\raisebox{-0.5\ht\midalignbox}{\usebox{\midalignbox}}}%
\begin{equation}
	\label{eq:monaddiagrams}
	\midalign{\includegraphics{monad.1}}
	\qquad\text{and}\qquad
	\midalign{\includegraphics{monad.2}}
\end{equation}
commute. Dually, a comonad $(\Omega, \Delta, \varepsilon)$ on a category $\calC$
is an endofunctor $\Omega\colon\calC\rightarrow\calC$ together with
two natural transformations $\Delta\colon \Omega\rightarrow \Omega^2$ and
$\varepsilon\colon \Omega\rightarrow \Id$ such that the diagrams
\begin{equation}
	\label{eq:comonaddiagrams}
	\midalign{\includegraphics{monad.3}}
	\qquad\text{and}\qquad
	\midalign{\includegraphics{monad.4}}
\end{equation}
commute.

Fix a non-zero highest weight vector $v$ of $\Delta(0)$. Recall that $\enva{\frak g}$
admits a coalgebra structure with counit
$\tilde\epsilon\colon\enva{\frak g}\rightarrow\bbC$ and comultiplication
$\tilde\Delta\colon\enva{\frak g}\rightarrow\enva{\frak g}\otimes\enva{\frak g}$.
This induces two
homomorphisms
\begin{align}
	\label{eq:comone}
	D&\colon\Delta(0)\hookrightarrow\Delta(0)\otimes\Delta(0),
		uv\mapsto \tilde\Delta(u)(v\otimes v), \\
	\label{eq:comtwo}
	E&\colon\Delta(0)\epi L(0),
		uv\mapsto \tilde\epsilon(u),
\end{align}
for $u\in\enva{\frak n_-}$, where we identify $L(0)$ with $\bbC$
via $\overline v\mapsto 1$.

\begin{proposition}
	\label{prop:comonad}
	The homomorphisms~\eqref{eq:comone} and~\eqref{eq:comtwo}
	induce a co\-mon\-ad $(\Delta(0)\otimes\lowbar, \Delta, \epsilon)$ on $\tcatO$
	with $\Delta$ injective and $\epsilon$ surjective, and dually
	a monad $(\nabla(0)\otimes\lowbar, \nabla, \eta)$ with $\nabla$ surjective
	and $\eta$ injective.
\end{proposition}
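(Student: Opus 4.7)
The plan is to define the comultiplication and counit of the comonad pointwise by $\Delta_M \defeq D\otimes\Id_M$ and $\varepsilon_M \defeq E\otimes\Id_M$ for each $M\in\tcatO$, after identifying $L(0)\otimes M$ with $M$ in the target of $\varepsilon_M$. Naturality of both families in $M$ is immediate from the bifunctoriality of the tensor product, since each morphism acts as the identity on the $M$-factor. For the monad, I would correspondingly set $\nabla_M \defeq D^\star\otimes\Id_M$ and $\eta_M \defeq E^\star\otimes\Id_M$, using Lemma~\ref{lem:dualtens} to identify $\nabla(0)\otimes\nabla(0)$ with $(\Delta(0)\otimes\Delta(0))^\star$.

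The main work is to check that the comonad axioms~\eqref{eq:comonaddiagrams} hold. After stripping off the trailing $\otimes\Id_M$, this reduces to verifying
\[
	(D\otimes\Id_{\Delta(0)})\circ D = (\Id_{\Delta(0)}\otimes D)\circ D
\]
and
\[
	(E\otimes\Id_{\Delta(0)})\circ D = \Id_{\Delta(0)} = (\Id_{\Delta(0)}\otimes E)\circ D
\]
as $\frak g$-module homomorphisms on $\Delta(0)$. Both identities follow from the corresponding coalgebra axioms for $\enva{\frak g}$ applied to $u$: evaluating at $uv$ and unwinding the definitions~\eqref{eq:comone} and~\eqref{eq:comtwo}, the first identity becomes $(\tilde\Delta\otimes\Id)\tilde\Delta(u)\cdot(v\otimes v\otimes v) = (\Id\otimes\tilde\Delta)\tilde\Delta(u)\cdot(v\otimes v\otimes v)$, which is coassociativity of $\tilde\Delta$; the second reduces to $(\tilde\epsilon\otimes\Id)\tilde\Delta(u)\cdot(v\otimes v) = uv = (\Id\otimes\tilde\epsilon)\tilde\Delta(u)\cdot(v\otimes v)$, which is the counit axiom. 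Since elements of the form $uv$ span $\Delta(0)$ and all maps involved are $\frak g$-equivariant, this suffices.

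Injectivity of $\Delta$ and surjectivity of $\varepsilon$ then follow easily: $E$ is surjective by construction, and the counit identity just verified exhibits $E\otimes\Id_{\Delta(0)}$ as a retraction of $D$, so $D$ is a split monomorphism. Since tensoring over $\bbC$ is exact, both properties transfer to $\Delta_M$ and $\varepsilon_M$. The monad half of the proposition is obtained by applying the $\star$-duality throughout: $\star$ is contravariant and exact, and by Lemma~\ref{lem:dualtens} it commutes with tensor products, so the comonad diagrams for $D$ and $E$ dualise directly into the monad diagrams for $D^\star$ and $E^\star$, swapping mono with epi. The only subtlety is the bookkeeping of the identifications $L(0)\otimes M\cong M$ and $(\Delta(0)\otimes\Delta(0))^\star\cong\nabla(0)\otimes\nabla(0)$; the substantive input is nothing more than coassociativity and counitality of $\enva{\frak g}$.
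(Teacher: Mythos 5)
Your proof is correct and takes essentially the same route as the paper: the structure maps are defined as $D\otimes\Id_M$ and $E\otimes\Id_M$, the comonad axioms are verified by a direct computation that ultimately rests on coassociativity and counitality of $\enva{\frak g}$, and the monad half is obtained by applying the $\star$-duality via Lemma~\ref{lem:dualtens}. The only cosmetic difference is that the paper checks the diagrams on the generators $v\otimes m$ of $\Delta(0)\otimes M$, whereas you strip off $\otimes\Id_M$ and verify the identities on $\Delta(0)$ itself.
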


\begin{proof}
	Fix $M\in\tcatO$. Applying the functor $\lowbar\otimes M$
	to~\eqref{eq:comone} and~\eqref{eq:comtwo} we obtain the homomorphisms
	(where as above we identify $L(0)$ with $\bbC$)
	\[
		\Delta_M\defeq D\otimes \text{Id}_M
			\colon \Delta(0)\otimes M\hookrightarrow\Delta(0)\otimes\Delta(0)\otimes M,
	\]
	and
	\[
		\epsilon_M\defeq E\otimes \text{Id}_M\colon \Delta(0)\otimes M\epi M.
	\]
	
	By the proof of Proposition~\ref{prop:preservestilt},
	$\Delta(0)\otimes M$ is generated by elements
	of the form $v\otimes m$, $m\in M$. For such an element,
	it is trivial to show that
	\[
		\bigl((\epsilon_{\Delta(0)\otimes M})\circ\Delta_M\bigr)(v\otimes m)
		= \bigl((\text{Id}_{\Delta(0)}\otimes \epsilon_M)\circ\Delta_M\bigr)(v\otimes m)
		= v\otimes m,
	\]
	and
	\begin{align*}
		\bigl((\Delta_{\Delta(0)\otimes M})\circ\Delta_M\bigr)(v\otimes m)
		&= \bigl((\text{Id}_{\Delta(0)}\otimes \Delta_M)\circ\Delta_M\bigr)(v\otimes m) \\
		&= v\otimes v\otimes v\otimes m,
	\end{align*}
	so the diagrams~\eqref{eq:comonaddiagrams} commute, proving
	that $(\Delta(0)\otimes\lowbar, \Delta, \epsilon)$ is a comonad on $\tcatO$.
	
	Applying $\star\circ(\lowbar\otimes M^\star)$ to~\eqref{eq:comone}
	and~\eqref{eq:comtwo} gives the homomorphisms
	\begin{align*}
		\nabla_M&\defeq (\Delta_{M^\star})^\star\colon
			\nabla(0)\otimes\nabla(0)\otimes M\epi\nabla(0)\otimes M, \text{ and}\\
		\eta_M&\defeq (\epsilon_{M^\star})^\star\colon
			M\hookrightarrow \nabla(0)\otimes M.
	\end{align*}
	By duality, the diagrams~\eqref{eq:monaddiagrams} commute.
\end{proof}

We can refine this result to the category $\catO_0$.

\begin{theorem}
	\label{thm:comonado0}
	The homomorphisms~\eqref{eq:comone} and~\eqref{eq:comtwo}
	induce a comonad $(G_{\Delta(0)}, \bar\Delta, \bar\epsilon)$ on
	$\catO_0$, with $\bar\Delta$ injective and $\bar\epsilon$ surjective,
	and dually a monad $(G_{\nabla(0)}, \bar\nabla, \bar\eta)$ on
	$\catO_0$, with $\bar\nabla$ surjective and $\bar\eta$ injective.
\end{theorem}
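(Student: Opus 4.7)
The plan is to push the comonad $(\Delta(0)\otimes\lowbar, \Delta, \epsilon)$ on $\tcatO$ from Proposition~\ref{prop:comonad} down to $\catO_0$ via the exact block projection $\pi \defeq \lowbar\restrict_0\colon\tcatO\to\catO_0$; note that $G_{\Delta(0)} = \pi\circ(\Delta(0)\otimes\lowbar)\circ\iota$, where $\iota\colon\catO_0\hookrightarrow\tcatO$ denotes the inclusion. For the counit $\bar\epsilon_M\colon G_{\Delta(0)}M\to M$, observe that since the target $M$ of $\epsilon_M\colon \Delta(0)\otimes M\epi M$ lies in $\catO_0$, Remark~\ref{rem:ntransfactors} provides a unique factorisation $\epsilon_M = \bar\epsilon_M\circ q_M$ through the canonical block-$0$ projection $q_M\colon\Delta(0)\otimes M\epi G_{\Delta(0)}M$, and surjectivity of $\bar\epsilon_M$ is inherited from that of $\epsilon_M$.

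For the comultiplication $\bar\Delta_M\colon G_{\Delta(0)}M\to G_{\Delta(0)}^{2}M$, I would take the block-$0$ restriction of the composite
\[
\Delta(0)\otimes M\xrightarrow{\ \Delta_M\ }\Delta(0)\otimes\Delta(0)\otimes M\xrightarrow{\ \Id_{\Delta(0)}\otimes q_M\ }\Delta(0)\otimes G_{\Delta(0)}M,
\]
with naturality in $M$ inherited from that of $\Delta_M$ and $q_M$, and well-definedness of the restriction following from exactness of $\pi$. Dually, using that $\star$ commutes with $\pi$ and with tensor products (Lemma~\ref{lem:dualtens}), the monad $(G_{\nabla(0)}, \bar\nabla, \bar\eta)$ is obtained by applying the self-duality $\star$ throughout the construction, exactly as in the last paragraph of the proof of Proposition~\ref{prop:comonad}.

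It then remains to verify the counit and coassociativity diagrams for $(G_{\Delta(0)}, \bar\Delta, \bar\epsilon)$ and to deduce the stated injectivity and surjectivity. Both diagrams should reduce to their $\tcatO$-counterparts by careful diagram chasing, using naturality of $q$, the idempotency of the block projection $\pi\circ\iota$, and the fact that $\Delta_M$ and $\epsilon_M$ are $\frak g$-homomorphisms (so they preserve central character decompositions and are compatible with block projections). Once the counit axiom $\bar\epsilon_{G_{\Delta(0)}M}\circ \bar\Delta_M = \Id_{G_{\Delta(0)}M}$ is in hand, $\bar\Delta_M$ automatically admits a retraction and so is injective, while surjectivity of $\bar\epsilon_M$ was already secured. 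The main obstacle will be the coassociativity verification: since the formula for $\bar\Delta$ interposes the projection $q$ between the two applications of $\Delta$, the two sides of the coassociativity square formally realise different interleavings of $\Delta$'s with $q$'s, and reconciling them requires both the coassociativity on $\tcatO$ and the compatibility of the block projections across the iterated comonad.
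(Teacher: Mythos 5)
Your construction is essentially the paper's own. Your $\bar\epsilon_M$ (defined by factoring $\epsilon_M$ through the block projection) coincides with the paper's $\bar\epsilon_M=\epsilon_M\circ\iota_{G_{\Delta(0)}M}$, and your $\bar\Delta_M$ (block-$0$ restriction of $(\Id_{\Delta(0)}\otimes\pi_{G_{\Delta(0)}M})\circ\Delta_M$) is exactly the paper's $\pi_{G_{\Delta(0)}G_{\Delta(0)}M}\circ\Delta_M\circ\iota_{G_{\Delta(0)}M}$, since $\pi_{G_{\Delta(0)}G_{\Delta(0)}M}$ is the composite of the two block projections you write separately. Surjectivity of $\bar\epsilon_M$ from that of $\epsilon_M$, injectivity of $\bar\Delta_M$ from the retraction provided by the counit axiom, and the monad on $G_{\nabla(0)}$ by applying $\star$ throughout are likewise the arguments of Corollary~\ref{cor:injsur} and the end of the paper's proof. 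So the skeleton is correct and matches the paper.

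The weak point is that the actual content of the theorem --- the verification of the two diagrams~\eqref{eq:comonaddiagrams} for $(G_{\Delta(0)},\bar\Delta,\bar\epsilon)$, which occupies Lemmas~\ref{lem:lcomm} and~\ref{lem:rcomm} --- is only promised (``should reduce by careful diagram chasing''), and you single out coassociativity as an unresolved obstacle. That obstacle dissolves once you observe that $\Delta_X=D\otimes\Id_X$ and $\epsilon_X=E\otimes\Id_X$ for \emph{every} $X$, hence both are natural in $X$; applying this naturality to the $\frak g$-module projection $q_M\defeq\pi_{G_{\Delta(0)}M}\colon\Delta(0)\otimes M\epi G_{\Delta(0)}M$ gives
\begin{align*}
	(\Id_{\Delta(0)}\otimes\Id_{\Delta(0)}\otimes q_M)\circ\Delta_{\Delta(0)\otimes M}
		&= \Delta_{G_{\Delta(0)}M}\circ(\Id_{\Delta(0)}\otimes q_M), \\
	q_M\circ\epsilon_{\Delta(0)\otimes M}
		&= \epsilon_{G_{\Delta(0)}M}\circ(\Id_{\Delta(0)}\otimes q_M),
\end{align*}
which lets you slide all block projections to the outside of each composite; the inserted idempotents $\iota\circ\pi$ that remain are absorbed because every map in sight takes values in an object of $\catO_0$ and therefore kills all other blocks (this is precisely Remarks~\ref{rem:ntransfactors} and~\ref{rem:comodfactors}). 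After these two moves, both diagrams follow from Proposition~\ref{prop:comonad} together with $\pi_{G_{\Delta(0)}M}\circ\iota_{G_{\Delta(0)}M}=\Id$. This is exactly how the paper argues the lower counit triangle and coassociativity (the upper triangle is done there by an explicit element computation, but the formal chase above works equally well). To make your proposal a proof, you must carry out this chase rather than defer it; as written, the ``different interleavings of $\Delta$'s and $q$'s'' you worry about are resolved only by the naturality identities displayed above, which your sketch gestures at but never states.
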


We prove Theorem~\ref{thm:comonado0} in parts, throughout the rest
of this section. Define
\begin{align*}
	\bar\Delta_M&\colon G_{\Delta(0)}M\rightarrow G_{\Delta(0)}G_{\Delta(0)} M,\text{ and} \\
	\bar\epsilon_M&\colon G_{\Delta(0)}M\rightarrow M,
\end{align*}
by
\begin{align*}
	\bar\Delta_M&\defeq
		\pi_{G_{\Delta(0)}G_{\Delta(0)}M}\circ\Delta_M\circ\iota_{G_{\Delta(0)}M},
		\text{ and} \\
	\bar\epsilon_M&\defeq \epsilon_M\circ\iota_{G_{\Delta(0)}M},
\end{align*}
where $\pi_x$ and $\iota_x$ as before denotes natural projections and injections.
Let $\bar\Delta$ and $\bar\epsilon$ be the natural transformations
corresponding to $\bar\Delta_M$ and $\bar\epsilon_M$.

\begin{remark}
	\label{rem:comodfactors}
	Similarly as in the case of Remark~\ref{rem:ntransfactors}, by central character
	considerations we see that $\pi_{G_{\Delta(0)}G_{\Delta(0)}}\circ\Delta_M$
	factors through $\bar\Delta_M$, and $\epsilon_M$ factors through
	$\bar\epsilon_M$, i.e. the diagrams
	\[
		\includegraphics{comodfactors.1}\qquad
		\includegraphics{comodfactors.2}
	\]
	commute.
\end{remark}

\begin{lemma}
	\label{lem:lcomm}
	The left of the diagrams~\eqref{eq:comonaddiagrams}
	for the triple $(G_{\Delta(0)}, \bar\Delta, \bar\epsilon)$ commutes.
\end{lemma}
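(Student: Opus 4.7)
The plan is to reduce the claim to the commutativity of the corresponding diagram for the comonad $(\Delta(0)\otimes\lowbar,\Delta,\epsilon)$ on $\tcatO$, which has already been established in Proposition~\ref{prop:comonad}. The technical bridge between the $\tcatO$-level and the $\catO_0$-level is provided by Remarks~\ref{rem:ntransfactors} and~\ref{rem:comodfactors}: they assert that the block projection $\pi$ absorbs the non-principal components of $\Delta_M$, $\epsilon_M$, and natural transformations arising from tensoring, so each $\bar{\phantom{x}}$-decorated map, pre- or post-composed with the natural inclusion, can be rewritten in terms of its underlying $\tcatO$-map together with a single block projection.

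Concretely, I would fix $M\in\catO_0$ and unfold each leg of the left square of~\eqref{eq:comonaddiagrams} into a single composite of the form $\pi\circ\Phi\circ\iota$, where $\iota$ is the inclusion of $G_{\Delta(0)}M$ into $\Delta(0)\otimes M$, $\pi$ is the block projection on the appropriate triple (or double) tensor product, and $\Phi$ is a composite built from $\Delta_M$, $\epsilon_M$, and identity maps. The unfolding proceeds one arrow at a time: each occurrence of $\bar\Delta_M$ or $\bar\epsilon_M$ is replaced by its definition, and each hidden $\iota$--$\pi$ pair produced in the middle of the resulting composite is then collapsed using the factorization in Remark~\ref{rem:comodfactors} (and, for the application of $G_{\Delta(0)}$ to $\bar\epsilon_M$, Remark~\ref{rem:ntransfactors}).

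Once both legs have been rewritten in this canonical form, the two resulting $\Phi$'s coincide by the commutativity of the corresponding $\tcatO$-diagram in Proposition~\ref{prop:comonad}, and the desired equality follows by sandwiching between the same $\iota$ and $\pi$. The main obstacle is purely bookkeeping: the functor $G_{\Delta(0)}$ applied to a morphism contributes an extra $\iota$--$\pi$ pair that must be absorbed in the correct order, and several different block projections appear (on $G_{\Delta(0)}M$, on $G_{\Delta(0)}^2 M$, and on the intermediate tensor products), so the chief care is in keeping track of these and applying the appropriate factorization lemma at each step.
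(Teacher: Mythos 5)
Your reduction is sound, and for the lower triangle of the left diagram (the identity $(G_{\Delta(0)}\bar\epsilon_M)\circ\bar\Delta_M=\Id_{G_{\Delta(0)}M}$) it is exactly the paper's argument: the paper assembles a ladder whose squares commute by Remarks~\ref{rem:comodfactors} and~\ref{rem:ntransfactors}, whose top row is $\Id_{\Delta(0)\otimes M}$ by Proposition~\ref{prop:comonad}, and whose vertical maps are the surjective projections, which forces the bottom row to be the identity. Where you genuinely diverge is the upper triangle $\bar\epsilon_{G_{\Delta(0)}M}\circ\bar\Delta_M=\Id_{G_{\Delta(0)}M}$: the paper does not descend this from $\tcatO$ but proves it by a direct element computation, expanding $\Delta_M$ on an element $\sum_i(u_iv)\otimes m_i$ and using the counit property $\tilde\epsilon(u'_{ij})=0$ to kill the extra terms. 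Your descent works here too, but be aware that after unfolding, the remaining $\iota$--$\pi$ pair is absorbed by Remark~\ref{rem:comodfactors} applied at the object $G_{\Delta(0)}M$ only after you move the middle projection past the counit, i.e.\ you also need $\pi_{G_{\Delta(0)}M}\circ\epsilon_{\Delta(0)\otimes M}=\epsilon_{G_{\Delta(0)}M}\circ\bigl(\Id_{\Delta(0)}\otimes\pi_{G_{\Delta(0)}M}\bigr)$ (naturality of the $\tcatO$-level counit, immediate since $\epsilon_X=E\otimes\Id_X$, but not literally one of the two stated remarks --- it is the analogue of the explicit square-check the paper performs for $\Delta$ in Lemma~\ref{lem:rcomm}), so this step should be made explicit. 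What your uniform approach buys is a single descent argument handling both triangles without any element computation; what the paper's computation buys is that it shows exactly where the counit axiom of the coalgebra structure on $\enva{\frak g}$ enters.
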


\begin{proof}
	Fix $M\in\catO_0$, with a weight basis $m_1, m_2, \dots\in M$,
	and consider an element
	\[
		\sum_{i=1}^k(u_iv)\otimes m_i\in G_{\Delta(0)}M,
	\]
	where $u_1$, $\dots$, $u_k\in\enva{\frak n_-}$. Applying
	$\Delta_M$ yields, after collecting the elements of the form
	$v\otimes \lowbar\otimes \lowbar$,
	\begin{equation}
		\label{eq:blahhh}
		\sum_{i=1}^kv\otimes (u_iv)\otimes m_i +
			\sum_{i, j}(u'_{ij}v)\otimes (u''_{ij}v)\otimes m_i,
	\end{equation}
	where $\tilde\epsilon(u'_{ij})=0$ for all $u'_{ij}$ in the sum on the
	right. Hence, when applying
	\[
		(\epsilon\otimes\text{Id}_{G_{\Delta(0)}M})\circ
			(\text{Id}_{\Delta(0)}\otimes \pi_{G_{\Delta(0)}M}),
	\]
	the right hand sum of~\eqref{eq:blahhh} maps to zero, while
	the left hand sum of~\eqref{eq:blahhh} maps to
	\[
		\sum_{i=1}^k (u_iv)\otimes m_i.
	\]
	Hence $\bar\epsilon_{G_{\Delta(0)}M}\circ\bar\Delta_M=\text{Id}_{G_{\Delta(0)}M}$,
	so the upper triangle of the left diagram of~\eqref{eq:comonaddiagrams}
	commutes.
	
	For the lower triangle, consider the following diagram.
	\[
		\includegraphics{monad_o0.3}
	\]
	The left square and the triangle commutes by Remark~\ref{rem:comodfactors},
	and the right quadrangle commutes by Remark~\ref{rem:ntransfactors}, and
	hence the diagram commutes.
	By Proposition~\ref{prop:comonad}, the top row equals
	$\text{Id}_{\Delta(0)\otimes M}$, and hence the bottom row
	equals $\text{Id}_{G_{\Delta(0)}M}$, as required.
\end{proof}

\begin{corollary}
	\label{cor:injsur}
	The homomorphism $\bar\epsilon_M$ is surjective and
	the homomorphism $\bar\Delta_M$ is injective.
\end{corollary}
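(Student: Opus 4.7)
The plan is to derive both claims cheaply from the work already done: the surjectivity of $\bar\epsilon_M$ from the explicit factorisation noted in Remark~\ref{rem:comodfactors}, and the injectivity of $\bar\Delta_M$ from the counit triangle just proved in Lemma~\ref{lem:lcomm}. No further computation with elements of $\Delta(0)\otimes M$ should be required.

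First, I would dispatch surjectivity. By Proposition~\ref{prop:comonad} the map $\epsilon_M\colon\Delta(0)\otimes M\epi M$ is surjective on $\tcatO$. Remark~\ref{rem:comodfactors} tells us that $\epsilon_M$ factors as $\bar\epsilon_M\circ\pi_{G_{\Delta(0)}M}$, where $\pi_{G_{\Delta(0)}M}\colon\Delta(0)\otimes M\epi G_{\Delta(0)}M$ is the block projection. A composition being surjective forces the last map applied to be surjective, so $\bar\epsilon_M$ is surjective.

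Next, for injectivity of $\bar\Delta_M$, Lemma~\ref{lem:lcomm} established that the upper triangle of the left of~\eqref{eq:comonaddiagrams} commutes, i.e. $\bar\epsilon_{G_{\Delta(0)}M}\circ\bar\Delta_M=\Id_{G_{\Delta(0)}M}$. Thus $\bar\Delta_M$ is a split monomorphism, in particular injective. (Equivalently, one could instead use the lower triangle from the same lemma and the left inverse $G_{\Delta(0)}\bar\epsilon_M$.)

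There is no real obstacle here; the only thing to be careful about is confirming that Remark~\ref{rem:comodfactors} really yields the factorisation in the direction claimed (the projection composed on the source side, not on the target), but this is exactly what central character considerations give, since $M\in\catO_0$ means $\epsilon_M$ annihilates the off-principal blocks of $\Delta(0)\otimes M$.
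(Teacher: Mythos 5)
Your proof is correct and follows essentially the same route as the paper: surjectivity of $\bar\epsilon_M$ from the surjectivity of $\epsilon_M$ together with the central-character factorisation of Remark~\ref{rem:comodfactors}, and injectivity of $\bar\Delta_M$ because Lemma~\ref{lem:lcomm} exhibits it as a split monomorphism (the paper happens to quote the left inverse $G_{\Delta(0)}\bar\epsilon_M$ rather than $\bar\epsilon_{G_{\Delta(0)}M}$, but this is the same argument).
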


\begin{proof}
	Since $\bar\epsilon_M=\epsilon_M\circ\iota_{G_{\Delta(0)}M}$ it follows
	that $\bar\epsilon_M$ is surjective as $\epsilon_M$ is surjective.
	By Lemma~\ref{lem:lcomm} we have
	$G_{\Delta(0)}\bar\epsilon_M\circ\bar\Delta_M=\text{Id}_{G_{\Delta(0)}M}$,
	so $\bar\Delta_M$ is injective since $\text{Id}_{G_{\Delta(0)}M}$ is
	injective.
\end{proof}

\begin{lemma}
	\label{lem:rcomm}
	The right of the diagrams~\eqref{eq:comonaddiagrams}
	for the triple $(G_{\Delta(0)}, \bar\Delta, \bar\epsilon)$ commutes.
\end{lemma}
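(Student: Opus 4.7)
The plan is to transfer the coassociativity identity
\[
  (\text{Id}_{\Delta(0)}\otimes\Delta_M)\circ\Delta_M = \Delta_{\Delta(0)\otimes M}\circ\Delta_M
\]
from $\tcatO$, where it is supplied by Proposition~\ref{prop:comonad}, down to the corresponding identity
\[
  G_{\Delta(0)}(\bar\Delta_M)\circ\bar\Delta_M = \bar\Delta_{G_{\Delta(0)}M}\circ\bar\Delta_M
\]
on $\catO_0$, via a diagram chase of the same flavour as the treatment of the lower triangle in the proof of Lemma~\ref{lem:lcomm}. The role played there by Remarks~\ref{rem:ntransfactors} and~\ref{rem:comodfactors} in relating $\epsilon_M$ to $\bar\epsilon_M$ will here be played by the same two Remarks, together with naturality of $\Delta$, in relating $\Delta_M$, $\Delta_{\Delta(0)\otimes M}$ and $\text{Id}_{\Delta(0)}\otimes\Delta_M$ to their $\catO_0$-level counterparts $\bar\Delta_M$, $\bar\Delta_{G_{\Delta(0)}M}$ and $G_{\Delta(0)}(\bar\Delta_M)$.

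Concretely, I would assemble one large commutative diagram whose top edge is the coassociativity identity in $\tcatO$ above, whose bottom edge is the sought-after identity on $\catO_0$, and whose vertical arrows are the natural projections $\pi_{G_{\Delta(0)}^k M}$ for $k=1,2,3$ (pre-composed, where necessary, with intermediate factors of $\text{Id}_{\Delta(0)}\otimes\pi_{G_{\Delta(0)}^\ell M}$ so that one may pass between $\Delta(0)^{\otimes k}\otimes M$ and the corresponding module involving $G_{\Delta(0)}$). The faces involving the inner $\bar\Delta_M$ commute by Remark~\ref{rem:comodfactors}; the faces involving the outer $\bar\Delta_{G_{\Delta(0)}M}$ commute by Remark~\ref{rem:comodfactors} applied with $G_{\Delta(0)}M$ in place of $M$, combined with naturality of $\Delta$ at the projection $\pi_{G_{\Delta(0)}M}\colon\Delta(0)\otimes M\twoheadrightarrow G_{\Delta(0)}M$ (which is what relates $\Delta_{\Delta(0)\otimes M}$ to $\Delta_{G_{\Delta(0)}M}$); and the faces involving $G_{\Delta(0)}(\bar\Delta_M)$ commute by Remark~\ref{rem:ntransfactors} applied to the morphism $\bar\Delta_M$ in $\catO_0$.

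Since the top edge is an equality by Proposition~\ref{prop:comonad} and every internal face commutes, the bottom edge is also an equality, yielding the required coassociativity.

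The main obstacle will be the bookkeeping: several intermediate modules ($\Delta(0)^{\otimes 2}\otimes M$, $\Delta(0)\otimes G_{\Delta(0)}M$, $\Delta(0)^{\otimes 3}\otimes M$, $\Delta(0)^{\otimes 2}\otimes G_{\Delta(0)}M$, $\Delta(0)\otimes G_{\Delta(0)}^2 M$ and $G_{\Delta(0)}^3 M$) appear, all related by canonical projections, and one must be precise about which $\pi$ is which at each stage. Once the correct diagram is drawn, however, the commutativity of each face is immediate from Remarks~\ref{rem:ntransfactors}, \ref{rem:comodfactors} and naturality of $\Delta$.
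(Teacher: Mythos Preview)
Your proposal is correct and follows essentially the same approach as the paper. The paper organises the argument as two separate commutative diagrams (one for each composite $G_{\Delta(0)}\bar\Delta_M\circ\bar\Delta_M$ and $\bar\Delta_{G_{\Delta(0)}M}\circ\bar\Delta_M$), each linking the $\catO_0$-level map to its $\tcatO$-level counterpart via the same ingredients you name---Remark~\ref{rem:comodfactors}, Remark~\ref{rem:ntransfactors}, and naturality of $\Delta$ (written there as $D\otimes\pi_{G_{\Delta(0)}M}$)---and then invokes Proposition~\ref{prop:comonad} to equate the top rows; this is exactly your ``one large diagram'' split into two pieces.
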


\begin{proof}
	We claim that the diagrams
	\[
		\includegraphics[width=0.8\textwidth]{monad_o0.1}
	\]
	and
	\[
		\includegraphics[width=0.8\textwidth]{monad_o0.2}
	\]
	commute. For the first diagram, the left and top right squares
	commute by Remark~\ref{rem:comodfactors}, and the bottom right square
	commutes by Remark~\ref{rem:ntransfactors}. For the second diagram,
	the left and bottom right squares commute by Remark~\ref{rem:comodfactors}.
	For the top right square, we note that
	\begin{align*}
		\Delta_{\Delta(0)\otimes M} &= D\otimes \text{Id}_{\Delta(0)\otimes M}, \text{ and} \\
		\Delta_{G_{\Delta(0)}M} &= D\otimes \text{Id}_{G_{\Delta(0)}M},
	\end{align*}
	so the square commutes, since
	\begin{align*}
		D\otimes \pi_{G_{\Delta(0)}M}
		&= (\text{Id}_{\Delta(0)\otimes \Delta(0)}\otimes \pi_{G_{\Delta(0)}M})
			\circ (D\otimes \text{Id}_{\Delta(0)\otimes M}) \\
		&= (D\otimes \text{Id}_{G_{\Delta(0)}M})
			\circ (\text{Id}_{\Delta(0)}\otimes \pi_{G_{\Delta(0)}M}).
	\end{align*}
	Thus both diagrams commute. Hence, since
	\[
		(\text{Id}_{\Delta(0)}\otimes\Delta_M)\circ\Delta_M =
		\Delta_{\Delta(0)\otimes M}\circ\Delta_M
	\]
	by Proposition~\ref{prop:comonad}, and the fact that projections commute,
	it follows that
	\[
		G_{\Delta(0)}\bar\Delta_M\circ \bar\Delta_M
		= \bar\Delta_{G_{\Delta(0)}M}\circ\bar\Delta_M,
	\]
	and thus the right of the diagrams~\eqref{eq:comonaddiagrams}
	commute.
\end{proof}

From Lemma~\ref{lem:lcomm} and Lemma~\ref{lem:rcomm} it follows that
$(G_{\Delta(0)}, \bar\Delta, \bar\epsilon)$ is a comonad on
$\catO_0$, and $\bar\Delta$ is
injective and $\bar\epsilon$ is surjective by Corollary~\ref{cor:injsur}.
Finally, as in the proof of Proposition~\ref{prop:comonad},
setting
\begin{align*}
	\bar\nabla_M&\defeq (\bar\Delta_{M^\star})^\star, \text{ and}\\
	\bar\eta_M&\defeq (\bar\epsilon_{M^\star})^\star,
\end{align*}
gives a monad $(G_{\nabla(0)}, \bar\nabla, \bar\eta)$ with $\bar\nabla$
surjective and $\bar\eta$ injective, by duality, which concludes
the proof of Theorem~\ref{thm:comonado0}.

\section{Parabolic subcategories}
\label{sec:parabolic}

All the previous results can be generalized to the case of the parabolic
analogue of $\catO$, in the sense of Rocha-Caridi (see for example~\cite{rc, irving}).
Let $\frak p\subseteq \frak b$ be a parabolic subalgebra of $\frak g$,
let $\frak m\subseteq \frak n_-$ with
\[
	\frak g = \frak m\oplus \frak p,
\]
and let $R_{\frak m}$ be the roots of $\frak m$. The parabolic
analogies of $\catO$, $\tcatO$, $\calF(\Delta)$, etc.~are obtained by
substituting $\frak n_-$ by $\frak m$, $\frak b$ by $\frak p$,
and $R_-$ by $R_{\frak m}$, in the corresponding definition.
Thus, for example, $\catO^{\frak p}$ is defined
as the full subcategory of the category of $\frak g$-modules
consisting of weight modules that are finitely generated
as $\enva{\frak m}$-modules, and $\calF^{\frak p}(\Delta)$ is
the full subcategory of $\catO^{\frak p}$ that are free as $\enva{\frak m}$-modules.
Similarly, the partial order $\leqslant$ on
$\frak h^*$ is replaced by $\leqslant_{\frak p}$ defined as
$\lambda\leqslant_{\frak p}\mu$ if and only if
$\lambda-\mu\in\bbN_0R_{\frak m}$, and so on.

Recall that a generalised Verma module in $\catO^{\frak p}$
is an element of $\calF^{\frak p}(\Delta)$
that is generated by a highest weight vector (for details,
see \cite{lepowsky}). We denote the generalised Verma module
generated by a highest weight vector of weight $\lambda\in\frak h^*$ by
$\Delta^{\frak p}(\lambda)$. Furthermore, the objects
in $\calF^{\frak p}(\Delta)$ are precisely the objects in $\catO^{\frak p}$
that have a generalised Verma filtration.

Almost all statements and proofs of the previous sections hold
verbatim with these substitutions. The exception is
Proposition~\ref{prop:freeproj}, which needs to be restated in the
following (rather complicated) way.
Let $\frak g^{\frak p}$ denote the semisimple part of $\frak p$.

\begin{proposition}
	\label{prop:pfreeproj}
	Let $M$ be a $\enva{\frak m}$-free module with a $\enva{\frak m}$-basis
	\[
		\bigl\{\,v_{ij}\,\big\vert\,i\in I, 1\leq j\leq k_i\,\bigr\}
	\]
	for some index set $I$ and non-negative integers $k_i$ such that
	\[
		L_i \defeq \enva{\frak g^{\frak p}}\{\,v_{ij}\,\vert\,1\leq j\leq k_i\,\}
	\]
	is a $k_i$-dimensional $\frak g^{\frak p}$-module with basis
	$v_{i1}, \dotsc, v_{ik_i}$. Then
	\[
		M^{\leqslant\lambda} = M^{\leqslant_{\frak p}\lambda}
		\cong
		\bigoplus_{\substack{i\in I,\\ L_i\leqslant\lambda}}
		\enva{\frak n_-}\{v_{i1}, \dotsc, v_{ik_i}\},
	\]
	where $L_i\leqslant\lambda$ if $\weight(v_{ij})\leqslant\lambda$
	for all $1\leq j\leq k_i$.
\end{proposition}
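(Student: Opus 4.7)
My plan is to mimic the proof of Proposition~\ref{prop:freeproj} as closely as possible, with two structural substitutions: I replace the PBW decomposition $\enva{\frak g} = \enva{\frak n_-}\,\enva{\frak b}$ by $\enva{\frak g} = \enva{\frak m}\,\enva{\frak p}$, and I group the $\enva{\frak m}$-basis vectors $v_{ij}$ into the finite-dimensional $\frak g^{\frak p}$-invariant bundles $L_i$ (instead of treating them one at a time). A further PBW decomposition $\enva{\frak p} = \enva{\frak g^{\frak p}}\,\enva{\frak u'}$, where $\frak u'$ is a complement of $\frak g^{\frak p}$ in $\frak p$, splits the action of $\enva{\frak p}$ on each $v_{ij}$ into a $\frak g^{\frak p}$-piece that preserves $L_i$ and a $\frak u'$-piece whose generators lie in $\frak h\oplus\frak n_+$ and hence only shift weights by non-negative combinations of positive roots not in $R_{\frak m}$.

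Concretely, I set $N \defeq \sum_{L_i\nleqslant\lambda}\enva{\frak m}\{v_{i1},\dotsc,v_{ik_i}\}$ and aim to show $N = M^{\nleqslant\lambda} = M^{\nleqslant_{\frak p}\lambda}$. For $\enva{\frak g}$-invariance, applying both PBW decompositions gives $\enva{\frak g}\,N = \enva{\frak m}\,\enva{\frak g^{\frak p}}\,\enva{\frak u'}\{v_{ij}\}$, and the step marked $(*)$ in the proof of Proposition~\ref{prop:freeproj} carries over to show that the condition $L_i\nleqslant\lambda$ is preserved under these positive weight shifts, so $\enva{\frak g}\,N\subseteq N$. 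For the inclusion $M^{\nleqslant\lambda}\subseteq N$ I use the $\enva{\frak m}$-basis description of $M$: any weight vector $\nleqslant\lambda$ must involve a summand sitting in some bundle with $L_i\nleqslant\lambda$. Conversely, if $L_i\nleqslant\lambda$ then at least one $v_{ij}$ is directly in $M^{\nleqslant\lambda}$, and by $\frak g^{\frak p}$-equivariance its $\enva{\frak g^{\frak p}}$-orbit --- which spans all of $L_i$ --- lies inside $M^{\nleqslant\lambda}$, whence $\enva{\frak m}\{v_{ij}\}\subseteq M^{\nleqslant\lambda}$. Running the same argument with $\leqslant_{\frak p}$ in place of $\leqslant$ yields both equalities simultaneously. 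The final isomorphism follows from the $\enva{\frak m}$-freeness of $M$, together with the observation that $\enva{\frak n_-}\{v_{i1},\dotsc,v_{ik_i}\}=\enva{\frak m}\{v_{i1},\dotsc,v_{ik_i}\}$, since $L_i$ is already stable under the negative roots of $\frak g^{\frak p}$.

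The main obstacle I anticipate is confirming that the two orderings produce exactly the same submodule. Unlike in Proposition~\ref{prop:freeproj}, individual weights within a single bundle $L_i$ can sit on different sides of $\leqslant_{\frak p}$, so I have to argue carefully that the $\frak g^{\frak p}$-transport between weight spaces does not create a discrepancy between $M^{\nleqslant\lambda}$ and $M^{\nleqslant_{\frak p}\lambda}$. Finite-dimensionality of the $L_i$ is crucial here: it guarantees that as soon as any one weight space of $L_i$ lies in $M^{\nleqslant\lambda}$ (respectively $M^{\nleqslant_{\frak p}\lambda}$), the entire bundle does, so the all-or-nothing condition ``$L_i\nleqslant\lambda$'' determines the submodule unambiguously.
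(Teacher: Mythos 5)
Your overall strategy is the paper's own (the paper simply asserts that the argument of Proposition~\ref{prop:freeproj} carries over to give $M^{\nleqslant\lambda}=\sum_{L_i\nleqslant\lambda}\enva{\frak n_-}\{v_{i1},\dotsc,v_{ik_i}\}$), and your observation that $\enva{\frak n_-}\{v_{i1},\dotsc,v_{ik_i}\}=\enva{\frak m}\{v_{i1},\dotsc,v_{ik_i}\}$ is correct and needed. The genuine gap is in the one step where the analogy is not automatic, namely closure: your justification ``the condition $L_i\nleqslant\lambda$ is preserved under these positive weight shifts, so $\enva{\frak g}N\subseteq N$'' does not work. In Proposition~\ref{prop:freeproj} the step $(*)$ rests on the fact that every weight of $\enva{\frak b}v_i$ is $\nleqslant\lambda$ once $\weight(v_i)\nleqslant\lambda$. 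Here a bad bundle ($L_i\nleqslant\lambda$) typically contains basis vectors $v_{ij}$ of weight $\leqslant\lambda$; applying $\enva{\frak u'}$ to those can stay in weights $\leqslant\lambda$, so the weight argument says nothing about such terms, and the subsequent Levi factor moves weights in both directions. A repair in the spirit of your sketch: assuming (as in the intended application to $\calF^{\frak p}(\Delta)$, and as you already use in the inclusion $N\subseteq M^{\nleqslant\lambda}$ when you say the $\enva{\frak g^{\frak p}}$-orbit ``spans all of $L_i$'') that each $L_i$ is generated over $\enva{\frak g^{\frak p}}$ by any one of its weight vectors, choose for each bad $i$ a basis vector $v$ with $\weight(v)\nleqslant\lambda$; then $\enva{\frak g}L_i=\enva{\frak g}v=\enva{\frak m}\,\enva{\frak g^{\frak p}}\,\enva{\frak u'}v$, all weights of $\enva{\frak u'}v$ are $\nleqslant\lambda$, so the expansion argument (every $v_{kj}$ occurring has weight $\geqslant_{\frak p}$ the given weight, hence $\nleqslant\lambda$, hence lies in a bad bundle --- note you need \emph{every} occurring basis vector, not ``some summand'') puts $\enva{\frak u'}v$ into $N$; finally $N$ is $\enva{\frak g^{\frak p}}$-stable because $[\frak g^{\frak p},\frak m]\subseteq\frak m$ and each $L_k$ is a $\frak g^{\frak p}$-module, so each $\enva{\frak m}L_k$ is. The choice of a bad cyclic generator and the Levi-stability of $N$ are the actual content hidden behind ``completely analogous''; without them the step fails, and the cyclicity assumption should be made explicit since the stated hypothesis alone does not provide it.

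The second equality is not obtained by ``running the same argument with $\leqslant_{\frak p}$''. Since $\mu\leqslant_{\frak p}\lambda$ implies $\mu\leqslant\lambda$, the inclusion $M^{\nleqslant\lambda}\subseteq M^{\nleqslant_{\frak p}\lambda}$ is automatic, and the only nontrivial point is $M^{\nleqslant_{\frak p}\lambda}\subseteq N$; there the weight-expansion step breaks down, because a weight $\mu$ with $\mu\nleqslant_{\frak p}\lambda$ but $\mu\leqslant\lambda$ forces nothing about the bundles met. Indeed, with $\leqslant_{\frak p}$ exactly as defined in Section~\ref{sec:parabolic}, take $M=\Delta^{\frak p}(\nu)$ with $\dim L_1\geq 2$ and $\lambda=\nu$: the weight space at $\nu-\alpha$ ($\alpha$ a Levi root) has weight $\nleqslant_{\frak p}\nu$ and generates all of $M$, while your $N$ is zero; so a symbol-for-symbol rerun cannot prove the asserted identity, and the comparison of the two truncations has to be made through the bundles (i.e.\ through their Levi-highest weights, so that shifts by Levi roots are discounted) and argued separately. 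Your closing paragraph correctly identifies the all-or-nothing role of the finite-dimensional $L_i$, but it is deployed for the easy inclusion rather than for this one, which is where the first displayed equality actually has content.
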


\begin{proof}
	By completely analogous arguments as in the proof of Proposition~\ref{prop:freeproj},
	it follows that
	\[
		M^{\nleqslant\lambda} =
		\sum_{\substack{i\in I,\\ L_i\nleqslant\lambda}}
		\enva{\frak n_-}\{v_{i1}, \dotsc, v_{ik_i}\},
	\]
	and hence the claim follows.
\end{proof}

All objects of $\calF^{\frak p}(\Delta)$ satisfy the requirements
of Proposition~\ref{prop:pfreeproj}, and a straightforward argument shows
that $M\otimes N^*$ does as well, for all $M\in\calF^{\frak p}(\Delta)$
and $N\in\catO^{\frak p}$. In particular, we conclude that the arguments used in
Sections~\ref{sec:main} and~\ref{sec:comonad} all translate to the parabolic
setting.

The main results for the category $\catO_0^{\frak p}$ are thus the following.

\begin{theorem}
	There exist faithful functors
	\begin{align*}
		F^{\frak p}&\colon\catO_0^{\frak p}\hookrightarrow\PFun(\catO_0^{\frak p})^{\text{op}},
			M\mapsto F^{\frak p}_M,\\
		G^{\frak p}&\colon\catO_0^{\frak p}\hookrightarrow\TFun(\catO_0^{\frak p}),
			M\mapsto G^{\frak p}_M,\\
		H^{\frak p}&\colon\catO_0^{\frak p}\hookrightarrow\IFun(\catO_0^{\frak p})^{\text{op}},
			M\mapsto H^{\frak p}_M,
	\end{align*}
	all three satisfying $X_M\cong X_N$ if and only if $M\cong N$
	(where $X=F^{\frak p}, G^{\frak p}, H^{\frak p}$).
\end{theorem}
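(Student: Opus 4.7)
The plan is to transport the construction of Section~\ref{sec:main} to the parabolic setting by making the systematic substitutions listed in Section~\ref{sec:parabolic}, using Proposition~\ref{prop:pfreeproj} wherever Proposition~\ref{prop:freeproj} was invoked. Concretely, I would define
\[
G^{\frak p}_M N \defeq (M\otimes N)\restrict^{\frak p}_0,
\]
where $\lowbar\restrict^{\frak p}_0$ denotes the projection to the principal block of $\catO^{\frak p}$, and extend this to morphisms and to a functor $G^{\frak p}$ from $\catO_0^{\frak p}$ to $\End(\catO_0^{\frak p})$ exactly as before. Since $\catO_0^{\frak p}$ is again equivalent to the module category of a finite-dimensional algebra and $G^{\frak p}_M$ is manifestly exact, the existence of the right adjoint $H^{\frak p}_M$ follows from general nonsense, and we define $F^{\frak p}_M \defeq \star\circ H^{\frak p}_{M^\star}\circ\star$, the left adjoint.

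Next I would verify the preservation properties. The fact that $G^{\frak p}_M\in\TFun(\catO_0^{\frak p})$ reduces to the parabolic analogue of Proposition~\ref{prop:preservestilt}; the proof there was really about $\enva{\frak n_-}$-freeness and carries over word for word after replacing $\frak n_-$ by $\frak m$ and using generalised Verma modules. The adjointness argument of Theorem~\ref{thm:gadj} gives the explicit formula
\[
F^{\frak p}_M N \cong (M^*\otimes N)^{\leqslant_{\frak p} 0}\big\restrict^{\frak p}_0,
\]
and then $F^{\frak p}_M\in\PFun(\catO_0^{\frak p})$ follows from the parabolic counterpart of Proposition~\ref{prop:fpreservesdelta}: given a projective presentation and the parabolic Verma flag of the projective cover, one applies Proposition~\ref{prop:pfreeproj} in place of Proposition~\ref{prop:freeproj} to conclude that $(P\otimes N^*)^{\leqslant_{\frak p}0}$ lies in $\catO^{\frak p}$. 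Dually $H^{\frak p}_M\in\IFun(\catO_0^{\frak p})$.

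The step I expect to need the most care is faithfulness of $F^{\frak p}$ (equivalently, by duality, of $H^{\frak p}$), since the original proof picks an antidominant $\lambda$ with $L(\lambda)=\Delta(\lambda)$ as a test object. Faithfulness of $G^{\frak p}$ is immediate from the identifications $G^{\frak p}_{L(0)}\cong\Id$ and $G^{\frak p}_{\lowbar}L(0)\cong G^{\frak p}_{L(0)}\lowbar$ obtained as in Remark~\ref{rem:lzero}. For $F^{\frak p}$, given a non-zero $\phi\colon M\to N$ and a lowest weight vector $m^\star\in\phi^\star(N^\star)$ of weight $\mu$, I would choose an antidominant $\lambda\in\frak h^*$ with $\Delta^{\frak p}(\lambda)\in\catO_0^{\frak p}$ and simple (so that a suitable highest weight vector survives), together with $\lambda+\mu\leqslant_{\frak p}0$; Proposition~\ref{prop:pfreeproj} then guarantees that $\overline{m^\star\otimes v}$ and $\overline{n^\star\otimes v}$ are non-zero in the respective $F^{\frak p}$ values, and the same computation as before forces $F^{\frak p}_\phi\neq 0$. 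The subtle point is ensuring both that a simple generalised Verma module in $\catO_0^{\frak p}$ exists (this is standard) and that the weight $\mu$ produced from $\phi^\star$ satisfies $\lambda+\mu\leqslant_{\frak p}0$; for the latter one exploits that $\Supp N^\star$ lies below $0$ in the ordinary ordering and hence a fortiori in $\leqslant_{\frak p}$ for the relevant summand.

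Finally, the strengthened form ``$X^{\frak p}_M\vert_{\mathrm{proj/tilt/inj}}\ncong X^{\frak p}_N\vert_{\mathrm{proj/tilt/inj}}$ when $M\ncong N$'' transfers verbatim: for $G^{\frak p}$ one uses $G^{\frak p}_M L(0)\cong M$ together with the tilting coresolution of projectives in $\catO_0^{\frak p}$ and the Five Lemma; for $F^{\frak p}$ and $H^{\frak p}$ one takes projective, respectively injective, presentations and invokes right (respectively left) exactness, reducing to uniqueness of adjoints and hence to the already-established case of $G^{\frak p}$. Combining these verifications gives the theorem.
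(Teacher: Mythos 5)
Your proposal is correct and is essentially the paper's own argument: the paper defines $F^{\frak p}$, $G^{\frak p}$, $H^{\frak p}$ simply as the restrictions of $F$, $G$, $H$ to $\catO_0^{\frak p}$ and notes that the arguments of Sections~\ref{sec:main} and~\ref{sec:comonad} translate under the stated substitutions once Proposition~\ref{prop:freeproj} is replaced by Proposition~\ref{prop:pfreeproj}, which is precisely the translation you carry out (your intrinsically defined functors agree with these restrictions, since the block projection and the adjunctions restrict to $\catO_0^{\frak p}$). One small caveat in your faithfulness sketch: $\leqslant_{\frak p}$ is the \emph{stronger} order, so ``below $0$ in the ordinary order, hence a fortiori for $\leqslant_{\frak p}$'' is backwards as stated; the intended point is supplied by Proposition~\ref{prop:pfreeproj} itself, which asserts that the two truncations coincide on the relevant modules, so the faithfulness argument of Section~\ref{sec:main} applies verbatim with the simple generalised Verma module $\Delta^{\frak p}(\lambda)$ as test object.
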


\begin{proof}
	These are just the restrictions of $F$, $G$, and $H$ to
	$\catO_0^{\frak p}$.
\end{proof}

\begin{proposition}
\label{prop:pacyclic}
	For any $M\in\catO_0^{\frak p}$ the following holds:
	\begin{enumerate}[(a)]
		\item $F^{\frak p}_M$ and $G^{\frak p}_M$ preserve $\calF_0^{\frak p}(\Delta)$ and are
			acyclic on it.
		\item $G^{\frak p}_M$ and $H^{\frak p}_M$ preserve $\calF_0^{\frak p}(\nabla)$ and are
			acyclic on it.
	\end{enumerate}
\end{proposition}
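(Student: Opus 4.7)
The plan is to transport the proof of Proposition~\ref{prop:acyclic} verbatim to the parabolic setting, substituting $\frak n_- \rightsquigarrow \frak m$, $\frak b \rightsquigarrow \frak p$, $\catO \rightsquigarrow \catO^{\frak p}$, $\Delta(\lambda) \rightsquigarrow \Delta^{\frak p}(\lambda)$ and $\leqslant \rightsquigarrow \leqslant_{\frak p}$, and invoking Proposition~\ref{prop:pfreeproj} in place of Proposition~\ref{prep:freeproj} wherever the original argument needs it. The essential technical ingredient, already highlighted in the text surrounding Proposition~\ref{prop:pfreeproj}, is that $M\otimes N^*$ satisfies the hypotheses of that proposition whenever $M\in\calF^{\frak p}(\Delta)$ and $N\in\catO^{\frak p}$.

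For $G^{\frak p}_M$, exactness (tensor product over $\bbC$ followed by projection onto a block) gives acyclicity for free, and preservation of $\calF_0^{\frak p}(\Delta)$ and $\calF_0^{\frak p}(\nabla)$ is the parabolic analogue of Proposition~\ref{prop:preservestilt}. Similarly, preservation of $\calF_0^{\frak p}(\Delta)$ by $F^{\frak p}_M$ is the parabolic analogue of Proposition~\ref{prop:fpreservesdelta}.

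The main work lies in proving acyclicity of $F^{\frak p}_M$, which proceeds by a double induction. First, a standard long exact sequence argument on the length of a generalised Verma flag reduces the claim to modules of the form $\Delta^{\frak p}(\lambda)$. Then an induction on $\lambda$ with respect to $\leqslant$ handles the generalised Verma modules themselves: the base case is $\Delta^{\frak p}(0)$, which is the projective cover of $L(0)$ in $\catO_0^{\frak p}$ and hence tautologically acyclic under the right exact $F^{\frak p}_M$. For the inductive step, I take a short exact sequence
\[
	0\rightarrow K\rightarrow P^{\frak p}(\lambda)\rightarrow\Delta^{\frak p}(\lambda)\rightarrow 0
\]
with $K$ filtered by generalised Vermas of strictly greater highest weight, apply $M^*\otimes\lowbar$ to obtain an exact sequence of $\enva{\frak m}$-free modules of the form required by Proposition~\ref{prop:pfreeproj}, and then apply $\lowbar^{\leqslant 0}\restrict_0$ to deduce the exactness of
\[
	0\rightarrow F^{\frak p}_MK\rightarrow F^{\frak p}_MP^{\frak p}(\lambda)\rightarrow F^{\frak p}_M\Delta^{\frak p}(\lambda)\rightarrow 0.
\]
Comparison with the long exact sequence of left derived functors then kills $\calL_iF^{\frak p}_M\Delta^{\frak p}(\lambda)$ for all $i\geq 1$, exactly as in the original proof.

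The statement for $H^{\frak p}_M$ follows by duality, using that $\star$ restricts to an exact contravariant endofunctor of $\catO^{\frak p}$ interchanging $\calF_0^{\frak p}(\Delta)$ with $\calF_0^{\frak p}(\nabla)$, together with the parabolic analogue of the identity $H_M=\star\circ F_{M^\star}\circ\star$. The only point requiring any genuinely new check beyond transcription is the structural claim that $M^*\otimes N$ admits an $\enva{\frak m}$-basis decomposing into finite-dimensional $\frak g^{\frak p}$-submodules for $N\in\calF^{\frak p}(\Delta)$---and this is precisely the verification the text flags as straightforward when introducing Proposition~\ref{prop:pfreeproj}.
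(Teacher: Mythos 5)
Your proposal is correct and takes essentially the same route as the paper: the paper offers no separate proof of this proposition, but simply observes that the argument of Proposition~\ref{prop:acyclic} carries over verbatim under the parabolic substitutions once Proposition~\ref{prop:freeproj} is replaced by Proposition~\ref{prop:pfreeproj}, together with the remark that $M\otimes N^*$ satisfies the hypotheses of the latter. Your write-up just makes this translation (including the induction over generalised Verma flags and the duality step for $H^{\frak p}_M$) explicit.
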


\begin{proposition}
	\label{prop:pmdnnprojtiltinj}
	For all $M\in\calF_0^{\frak p}(\Delta)$, $N\in\calF_0^{\frak p}(\nabla)$ we have that
	\begin{enumerate}[(a)]
		\item $F^{\frak p}_NM$ is projective,
		\item $G^{\frak p}_MN\cong G^{\frak p}_NM$ is a tilting module, and
		\item $H^{\frak p}_MN$ is injective.
	\end{enumerate}
\end{proposition}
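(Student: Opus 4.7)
My plan is to mimic the proof of Proposition~\ref{prop:mdnnprojtiltinj} verbatim, exploiting the fact (asserted at the end of the section) that all constructions of Sections~\ref{sec:main} and~\ref{sec:comonad} carry over to the parabolic setting once $\frak n_-$, $\frak b$, $R_-$ and $\leqslant$ are replaced by $\frak m$, $\frak p$, $R_{\frak m}$ and $\leqslant_{\frak p}$ respectively, and Proposition~\ref{prop:pfreeproj} is used in place of Proposition~\ref{prop:freeproj}. The main inputs I need in the parabolic setting are the adjunction $(F^{\frak p}_N, G^{\frak p}_N)$, the fact that $G^{\frak p}_N$ preserves $\tcalF^{\frak p}(\nabla)$ for $N\in\calF^{\frak p}_0(\nabla)$, the symmetry $G^{\frak p}_M N\cong G^{\frak p}_N M$, and the Ringel-type vanishing $\mathrm{Ext}^i_{\catO^{\frak p}}(M,-)=0$ on $\calF^{\frak p}(\nabla)$ for $M\in\calF^{\frak p}(\Delta)$; the first three are simply the parabolic versions of results already used, while the Ringel vanishing is classical in the parabolic setting (the parabolic category $\catO^{\frak p}_0$ is quasi-hereditary with standard objects $\Delta^{\frak p}(\lambda)$ and costandards $\nabla^{\frak p}(\lambda)$).

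For part~(a), I would argue that $\Hom(F^{\frak p}_N M, -)\cong\Hom(M, G^{\frak p}_N -)$ by adjunction. Since $N\in\calF^{\frak p}_0(\nabla)$, the parabolic analogue of Proposition~\ref{prop:preservestilt} shows that $G^{\frak p}_N$ sends every module of $\catO^{\frak p}_0$ to a module in $\tcalF^{\frak p}(\nabla)$, and as $G^{\frak p}_N$ is exact it carries any short exact sequence in $\catO^{\frak p}_0$ to a short exact sequence of modules in $\calF^{\frak p}_0(\nabla)$. Applying $\Hom(M, -)$ to such a sequence then yields a short exact sequence by the Ringel vanishing, since $M\in\calF^{\frak p}_0(\Delta)$. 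Hence $\Hom(F^{\frak p}_N M, -)$ is exact, i.e.\ $F^{\frak p}_N M$ is projective.

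For part~(c), I invoke the parabolic analogue of $H^{\frak p}_M=\star\circ F^{\frak p}_{M^\star}\circ\star$ together with the fact that $\star$ is a contravariant exact self-equivalence swapping $\calF^{\frak p}_0(\Delta)$ with $\calF^{\frak p}_0(\nabla)$ and projectives with injectives; then (c) is immediate from~(a) applied to $N^\star\in\calF^{\frak p}_0(\Delta)$ and $M^\star\in\calF^{\frak p}_0(\nabla)$. Part~(b) is immediate from the parabolic analogue of Proposition~\ref{prop:preservestilt}, which guarantees $M\otimes N\in\tcalT^{\frak p}$; after projecting to $\catO^{\frak p}_0$ the module remains tilting, and the symmetry $G^{\frak p}_M N\cong G^{\frak p}_N M$ follows from the commutativity of the tensor product.

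The only genuine obstacle is verifying that the Ringel-type vanishing and the symmetry statement $G^{\frak p}_M N\cong G^{\frak p}_N M$ really do survive in the parabolic setting; the vanishing is standard quasi-hereditary input that I would cite from \cite{ringel} (adapted to the quasi-hereditary structure of $\catO^{\frak p}_0$), while the symmetry is formal from the natural isomorphism $M\otimes N\cong N\otimes M$ followed by projection to the principal block.
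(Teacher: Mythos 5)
Your proposal is correct and follows essentially the same route as the paper: the paper's own justification is precisely that the proof of Proposition~\ref{prop:mdnnprojtiltinj} (adjunction, the parabolic analogue of Proposition~\ref{prop:preservestilt}, exactness of $G^{\frak p}_N$, Ringel-type acyclicity of $\Hom(M,\lowbar)$ on modules with costandard flag, and duality for part~(c)) carries over verbatim after the substitutions $\frak n_-\mapsto\frak m$, $\leqslant\mapsto\leqslant_{\frak p}$, with Proposition~\ref{prop:pfreeproj} replacing Proposition~\ref{prop:freeproj}. Your explicit attention to the quasi-hereditary structure of $\catO^{\frak p}_0$ for the Ringel vanishing is exactly the input the paper implicitly relies on.
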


\begin{corollary}
	For all $M\in\calT_0^{\frak p}$, $F^{\frak p}_M$ maps tilting modules to projective modules,
	and $H^{\frak p}_M$ maps tilting modules to injective modules.
\end{corollary}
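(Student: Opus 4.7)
The plan is to deduce this corollary directly from Proposition~\ref{prop:pmdnnprojtiltinj}, exactly in the way the analogous non-parabolic corollary follows from Proposition~\ref{prop:mdnnprojtiltinj}. The key observation is that the tilting category is by definition the intersection
\[
	\calT_0^{\frak p} = \calF_0^{\frak p}(\Delta)\cap\calF_0^{\frak p}(\nabla),
\]
so any $M\in\calT_0^{\frak p}$ simultaneously has a generalised Verma filtration and a dual generalised Verma filtration, and likewise for any tilting object $T\in\calT_0^{\frak p}$.

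With this in hand, I would argue as follows. To show that $F^{\frak p}_MT$ is projective, I instantiate Proposition~\ref{prop:pmdnnprojtiltinj}(a) by taking the $N$ of that proposition to be $M\in\calT_0^{\frak p}\subseteq\calF_0^{\frak p}(\nabla)$ and the $M$ of that proposition to be $T\in\calT_0^{\frak p}\subseteq\calF_0^{\frak p}(\Delta)$; both hypotheses are satisfied, so the conclusion $F^{\frak p}_MT$ projective is immediate. Symmetrically, to show $H^{\frak p}_MT$ is injective I apply Proposition~\ref{prop:pmdnnprojtiltinj}(c) with $M$ itself in the role of the $\calF_0^{\frak p}(\Delta)$-object (which is legitimate since $M\in\calT_0^{\frak p}\subseteq\calF_0^{\frak p}(\Delta)$) and $T$ in the role of the $\calF_0^{\frak p}(\nabla)$-object.

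In this case there is no genuine obstacle to address; all the work has already been absorbed into Proposition~\ref{prop:pmdnnprojtiltinj}, and the only thing the corollary is adding is the remark that when both arguments are tilting, each of the three hypotheses of that proposition can be verified simultaneously. The proof is therefore essentially a one-line application of parts (a) and (c) of the preceding proposition, with the identity $\calT_0^{\frak p}=\calF_0^{\frak p}(\Delta)\cap\calF_0^{\frak p}(\nabla)$ as the only ingredient needed to match the hypotheses.
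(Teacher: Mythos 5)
Your proof is correct and matches the paper's (implicit) argument: the corollary is an immediate consequence of Proposition~\ref{prop:pmdnnprojtiltinj}, using only $\calT_0^{\frak p}=\calF_0^{\frak p}(\Delta)\cap\calF_0^{\frak p}(\nabla)$ to place $M$ and the tilting argument $T$ in the appropriate roles, exactly as in the non-parabolic case.
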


\begin{proposition}
	\label{prop:pfnd}
	For each $\lambda\in\frak h^*$ with $\Delta^{\frak p}(\lambda)\in\catO_0^{\frak p}$
	we have
	\begin{align*}
		F^{\frak p}_{\nabla^{\frak p}(\lambda)}\Delta^{\frak p}(\lambda) &\cong \Delta^{\frak p}(0),
			\text{ and} \\
		H^{\frak p}_{\Delta^{\frak p}(\lambda)}\nabla^{\frak p}(\lambda) &\cong \nabla^{\frak p}(0).
	\end{align*}
\end{proposition}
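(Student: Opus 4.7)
The plan is to run the proof of Proposition~\ref{prop:fnd} verbatim in the parabolic setting, with all objects, dualities, and adjunctions replaced by their parabolic analogues. The parabolic generalisations established in Section~\ref{sec:parabolic} give us (i) the adjunction $(F^{\frak p}_M,G^{\frak p}_M)$, (ii) Proposition~\ref{prop:pmdnnprojtiltinj}(a), which says that $F^{\frak p}_{\nabla^{\frak p}(\lambda)}\Delta^{\frak p}(\lambda)$ is projective in $\catO_0^{\frak p}$, and (iii) the fact that $\Delta^{\frak p}(0)$ is the projective cover of $L(0)$ in $\catO_0^{\frak p}$ (since $0$ is the highest weight of the principal block). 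Together these reduce the statement to identifying the simple top of $F^{\frak p}_{\nabla^{\frak p}(\lambda)}\Delta^{\frak p}(\lambda)$.

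Concretely, I would compute $\dim\Hom\bigl(F^{\frak p}_{\nabla^{\frak p}(\lambda)}\Delta^{\frak p}(\lambda),L(\mu)\bigr)$ for each simple $L(\mu)\in\catO_0^{\frak p}$ by applying adjunction to get $\dim\Hom\bigl(\Delta^{\frak p}(\lambda),G^{\frak p}_{\nabla^{\frak p}(\lambda)}L(\mu)\bigr)$. Since $\Delta^{\frak p}(\lambda)$ is generated by a weight vector of weight $\lambda$, this hom-space is controlled by the $\lambda$-weight space of $G^{\frak p}_{\nabla^{\frak p}(\lambda)}L(\mu)$, which in turn sits inside $\bigl(\nabla^{\frak p}(\lambda)\otimes L(\mu)\bigr)_\lambda$.

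For $\mu\neq 0$ with $L(\mu)\in\catO_0^{\frak p}$, we have $\mu<0$ in the usual order, and the weights of $\nabla^{\frak p}(\lambda)$ and $L(\mu)$ are bounded above by $\lambda$ and $\mu$ respectively. A short weight calculation using $\nu_1+\nu_2=\lambda$ with $\nu_1\leqslant\lambda$ and $\nu_2\leqslant\mu<0$ forces $\nu_2\geqslant 0$ and simultaneously $\nu_2\leqslant\mu<0$, a contradiction; hence $\bigl(\nabla^{\frak p}(\lambda)\otimes L(\mu)\bigr)_\lambda=\set{0}$ and the hom-space vanishes. For $\mu=0$, the module $L(0)$ is the trivial module, so $G^{\frak p}_{\nabla^{\frak p}(\lambda)}L(0)\cong\nabla^{\frak p}(\lambda)\restrict_0=\nabla^{\frak p}(\lambda)$, and $\Hom\bigl(\Delta^{\frak p}(\lambda),\nabla^{\frak p}(\lambda)\bigr)$ is one-dimensional by the usual argument that $\Delta^{\frak p}(\lambda)$ has simple top $L(\lambda)$ and $\nabla^{\frak p}(\lambda)$ has simple socle $L(\lambda)$.

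Putting these together, $F^{\frak p}_{\nabla^{\frak p}(\lambda)}\Delta^{\frak p}(\lambda)$ has simple top isomorphic to $L(0)$; combined with its projectivity via Proposition~\ref{prop:pmdnnprojtiltinj}, this forces $F^{\frak p}_{\nabla^{\frak p}(\lambda)}\Delta^{\frak p}(\lambda)\cong\Delta^{\frak p}(0)$. The second isomorphism follows by applying $\star$ to everything, using $H^{\frak p}=\star\circ F^{\frak p}_{\lowbar^{\,\star}}\circ\star$ and $\Delta^{\frak p}(\lambda)^\star\cong\nabla^{\frak p}(\lambda)$. I do not anticipate any real obstacle: the only place that needs parabolic-specific care is the weight-space vanishing step, and there the estimate $\nu_2\leqslant\mu<0$ is strong enough that one does not need to invoke the finer $\leqslant_{\frak p}$ order at all.
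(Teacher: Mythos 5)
Your proposal is correct and takes essentially the same route as the paper, which proves Proposition~\ref{prop:pfnd} simply by transporting the proof of Proposition~\ref{prop:fnd} verbatim to the parabolic setting: adjunction of $F^{\frak p}_{\nabla^{\frak p}(\lambda)}$ and $G^{\frak p}_{\nabla^{\frak p}(\lambda)}$, the vanishing $\bigl(G^{\frak p}_{\nabla^{\frak p}(\lambda)}L(\mu)\bigr)_\lambda=\set{0}$ for $\mu<0$, the one-dimensional $\Hom\bigl(\Delta^{\frak p}(\lambda),\nabla^{\frak p}(\lambda)\bigr)$, projectivity from Proposition~\ref{prop:pmdnnprojtiltinj}, and duality for the second isomorphism. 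Your added details (the explicit weight estimate and the identification $P^{\frak p}(0)\cong\Delta^{\frak p}(0)$ in $\catO_0^{\frak p}$) are exactly the points the paper leaves implicit, and they are correct.
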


\begin{proposition}
	The canonical homomorphisms $\Delta^{\frak p}(0)\epi L(0)$
	and $\Delta^{\frak p}(0)\hookrightarrow\Delta^{\frak p}(0)\otimes\Delta^{\frak p}(0)$
	induce a comonad $(G_{\Delta^{\frak p}(0)}, \Delta^{\frak p}, \epsilon^{\frak p})$
	on $\catO_0^{\frak p}$
	with $\Delta^{\frak p}$ injective and $\epsilon^{\frak p}$ surjective, and dually
	a monad $(G_{\nabla^{\frak p}(0)}, \nabla^{\frak p}, \eta^{\frak p})$
	with $\nabla^{\frak p}$ surjective and $\eta^{\frak p}$ injective.
\end{proposition}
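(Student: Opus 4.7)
The plan is to mirror the proof of Theorem~\ref{thm:comonado0} in the parabolic setting. As the authors note at the start of Section~\ref{sec:parabolic}, almost every argument in Sections~\ref{sec:main} and~\ref{sec:comonad} transfers once $\frak n_-$, $\frak b$, $R_-$ are replaced by $\frak m$, $\frak p$, $R_{\frak m}$ and Proposition~\ref{prop:freeproj} is replaced by Proposition~\ref{prop:pfreeproj}, so no essentially new idea should be required.

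First I would establish the parabolic analogue of Proposition~\ref{prop:comonad}: a comonad $(\Delta^{\frak p}(0)\otimes\lowbar,\Delta^{\frak p},\epsilon^{\frak p})$ on $\tcatO^{\frak p}$. The key observation is that $\Delta^{\frak p}(0)$ is the parabolic Verma module induced from the trivial one-dimensional $\frak p$-module, so it is $\enva{\frak m}$-free on a single highest weight vector $v$ of weight $0$. The counit and comultiplication of $\enva{\frak g}$ then yield well-defined homomorphisms
\begin{align*}
	D^{\frak p}&\colon\Delta^{\frak p}(0)\hookrightarrow\Delta^{\frak p}(0)\otimes\Delta^{\frak p}(0),\quad
		uv\mapsto\tilde\Delta(u)(v\otimes v),\\
	E^{\frak p}&\colon\Delta^{\frak p}(0)\epi L(0),\quad uv\mapsto\tilde\epsilon(u),
\end{align*}
for $u\in\enva{\frak m}$, exactly as in~\eqref{eq:comone} and~\eqref{eq:comtwo}. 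Tensoring with a fixed $M\in\tcatO^{\frak p}$ produces natural transformations $\Delta^{\frak p}_M$ and $\epsilon^{\frak p}_M$, and the comonad identities can be checked on generators of the form $v\otimes m$, which do generate $\Delta^{\frak p}(0)\otimes M$ over $\enva{\frak m}$ by the parabolic version of the argument in Proposition~\ref{prop:preservestilt}.

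Next I would pass down to $\catO_0^{\frak p}$ by setting
\[
	\bar\Delta^{\frak p}_M\defeq\pi\circ\Delta^{\frak p}_M\circ\iota,
	\qquad
	\bar\epsilon^{\frak p}_M\defeq\epsilon^{\frak p}_M\circ\iota,
\]
with $\iota$ and $\pi$ the natural injection and projection for the principal block of $\tcatO^{\frak p}$. Commutativity of the two comonad diagrams then follows exactly as in Lemmas~\ref{lem:lcomm} and~\ref{lem:rcomm}: the central character factorisations of Remarks~\ref{rem:ntransfactors} and~\ref{rem:comodfactors} still apply, because the block decomposition of $\tcatO$ supplied via Lemma~\ref{lem:findecomp} goes through verbatim in $\tcatO^{\frak p}$. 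Surjectivity of $\bar\epsilon^{\frak p}$ and injectivity of $\bar\Delta^{\frak p}$ follow from the counit triangle identity as in Corollary~\ref{cor:injsur}, and the dual monad statement is obtained by applying $\star$ pointwise.

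The only real obstacle is checking that $\Delta^{\frak p}(0)\otimes M$ is generated over $\enva{\frak m}$ by pure tensors $v\otimes m$, a fact the non-parabolic proof uses freely. Once the argument of Proposition~\ref{prop:preservestilt} is repeated with $\frak m$ in place of $\frak n_-$ (it only uses the decomposition $\frak g=\frak m\oplus\frak p$ together with PBW), every other step in Section~\ref{sec:comonad} translates mechanically and the proposition follows.
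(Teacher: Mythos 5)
Your proposal is correct and matches the paper's intended argument: the paper itself proves this proposition only by observing (after replacing Proposition~\ref{prop:freeproj} with Proposition~\ref{prop:pfreeproj}) that the constructions of Section~\ref{sec:comonad} -- the maps induced by the counit and comultiplication of $\enva{\frak g}$, the passage to the principal block, and the duality argument -- carry over verbatim with $\frak n_-$, $\frak b$ replaced by $\frak m$, $\frak p$. Your explicit check that $\Delta^{\frak p}(0)\otimes M$ is generated over $\enva{\frak m}$ by the vectors $v\otimes m$ is exactly the point the paper relies on via the parabolic version of Proposition~\ref{prop:preservestilt}, so no new ideas are needed and the proof is essentially the same.
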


\section{An example: $\frak{sl}_3(\bbC)$}
\label{sec:example}

In conclusion we will compute the `multiplication table'
given by $G_MN$ and $F_MN$, where $M$ and $N$ run through
the simple modules of $\catO_0$ for the algebra $\frak g=\frak{sl}_3(\bbC)$,
see Tables~\ref{tab:gmult} and~\ref{tab:fmult}.
Let $\alpha, \beta\in\frak h^*$ denote the simple roots, let
$s$ and $t$ be the corresponding simple reflections (i.e. with $s(\alpha)=-\alpha$
and $t(\beta)=-\beta$), and fix
a Weyl-Chevalley basis $X_{\pm\alpha}$, $X_{\pm\beta}$,
$X_{\pm(\alpha+\beta)}$, $H_\alpha$, $H_\beta$.

The `dot' action of the Weyl group
$W=S_3$ on $\frak h^*$ is defined by
\[
	w\cdot\lambda\defeq w(\lambda+\rho)-\rho
\]
for an element $w\in W$, where $\rho\in\frak h^*$ is half the sum
of the positive roots.
We set $L(w)\defeq L(w\cdot 0)$ for $w\in W$. Let $e$ denote the identity in $W$.
There are two proper parabolic subalgebras,
$\frak p^{\alpha}\defeq\frak b+\langle X_{-\alpha}\rangle_\bbC$
and $\frak p^{\beta}\defeq\frak b+\langle X_{-\beta}\rangle_\bbC$.

\begin{figure}
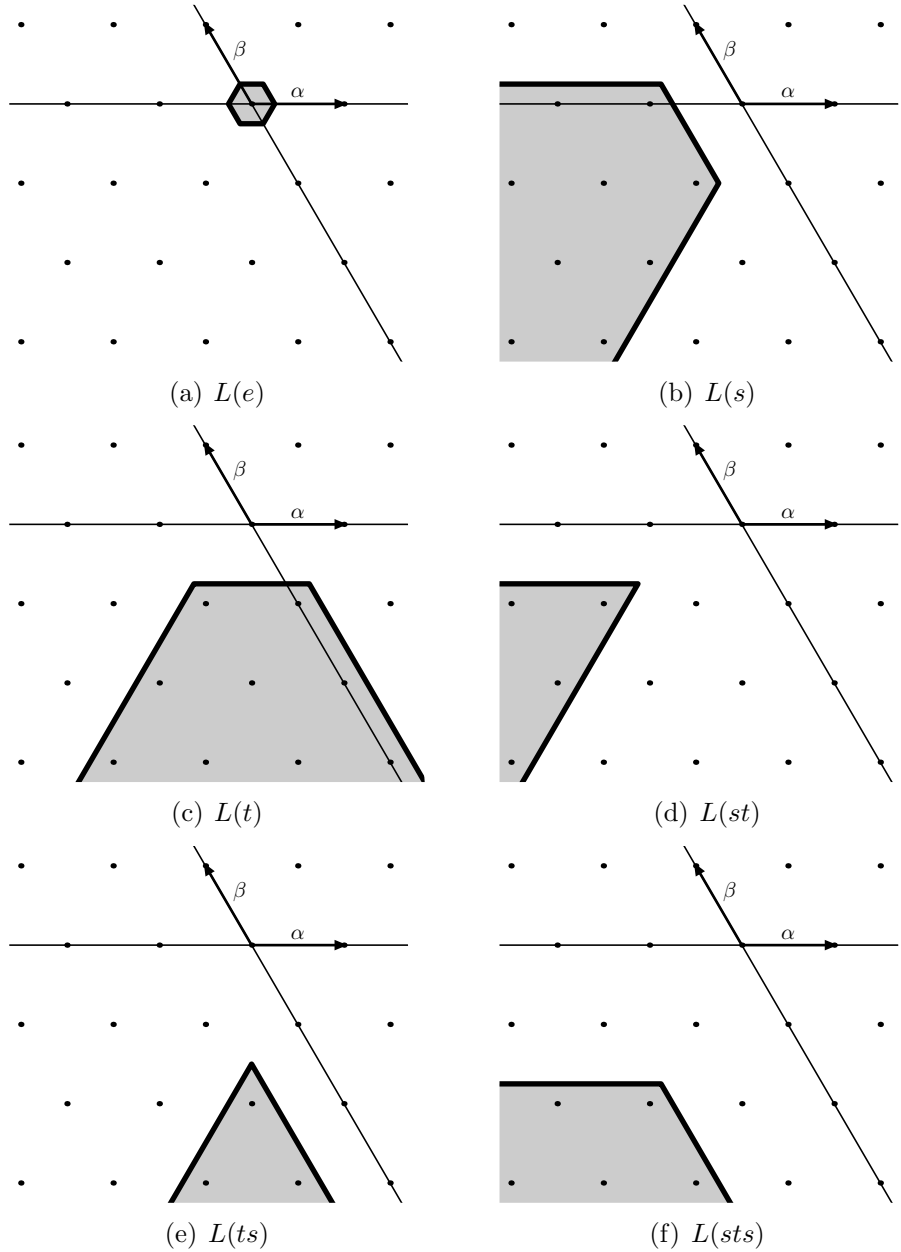

\begin{center}
	\mbox{
		\subfigure[$L(e)$]{\includegraphics[width=0.4\textwidth]{sl3o0enkla.1}} \qquad
		\subfigure[$L(s)$]{\includegraphics[width=0.4\textwidth]{sl3o0enkla.2}}
		}
	\mbox{
		\subfigure[$L(t)$]{\includegraphics[width=0.4\textwidth]{sl3o0enkla.3}} \qquad
		\subfigure[$L(st)$]{\includegraphics[width=0.4\textwidth]{sl3o0enkla.4}}
		}
	\mbox{
		\subfigure[$L(ts)$]{\includegraphics[width=0.4\textwidth]{sl3o0enkla.5}} \qquad
		\subfigure[$L(sts)$]{\includegraphics[width=0.4\textwidth]{sl3o0enkla.6}}
		}
	\caption{The simple modules in $\catO_0$ for the algebra
		$\frak{sl}_3(\bbC)$. Each dot is an integral weight, and the
		grey areas show the support of the corresponding module. Each non-empty
		weight space has dimension $1$ except for $L(sts)$, for which the dimensions
		are given by Kostant's function.}
	\label{fig:sl3simple}
\end{center}
\end{figure}

\newcommand{\smd}[4]{\phantom{\scriptstyle#2\,}%
	\hbox{\vtop{\vbox{\hbox{\clap{$\scriptstyle #1$}}\nointerlineskip\smallskip%
	\hbox{\clap{$\scriptstyle #2\;#3$}}}\nointerlineskip\smallskip%
	\hbox{\clap{$\scriptstyle#4$}}}}\phantom{\,\scriptstyle#3}}

\begin{table}
\begin{center}
	\begin{tabular}{|c||c|c|c|c|c|c|}
		\hline
		\vphantom{$\Big)$}$G_MN$ & $L(e)$ & $L(s)$ & $L(t)$ & $L(st)$ & $L(ts)$ & $L(sts)$ \\
		\hline\hline
		\vphantom{$\Big)$}$L(e)$ & $L(e)$ & $L(s)$ & $L(t)$ & $L(st)$ & $L(ts)$ & $L(sts)$ \\
		\hline
		$L(s)$ & $L(s)$ & $L(st)$ &
			$\smd{L(sts)}{L(st)}{L(ts)}{L(sts)}$ &  $0$   & $L(sts)$ &  $0$   \\
		\hline
		$L(t)$ & $L(t)$ & $\smd{L(sts)}{L(st)}{L(ts)}{L(sts)}$
			& $L(ts)$ & $L(sts)$ &  $0$   & $0$    \\
		\hline
		\vphantom{$\Big)$}$L(st)$ & $L(st)$ &  $0$   & $L(sts)$ &  $0$   & $0$    & $0$ \\
		\hline
		\vphantom{$\Big)$}$L(ts)$ & $L(ts)$ & $L(sts)$ &   $0$  &  $0$   & $0$    & $0$ \\
		\hline
		\vphantom{$\Big)$}$L(sts)$ & $L(sts)$ &  $0$   &   $0$  &  $0$   & $0$    & $0$ \\
		\hline
	\end{tabular}
	\caption{The ``multiplication table'' for the bifunctor $G$ on the
		simple modules in $\catO_0$ for $\frak{sl}_3(\bbC)$.}
	\label{tab:gmult}
\end{center}
\end{table}

\newcommand{\sms}[2]{\genfrac{}{}{0pt}{1}{#1}{#2}}

\begin{table}
\begin{center}
	\begin{tabular}{|c||c|c|c|c|c|c|}
		\hline
		\vphantom{$\Big)$}$F_MN$ & $L(e)$ & $L(s)$ & $L(t)$ & $L(st)$ & $L(ts)$ & $L(sts)$ \\
		\hline\hline
		\vphantom{$\Big)$} $L(e)$ & $L(e)$ & $L(s)$ & $L(t)$
			& $L(st)$ & $L(ts)$ & $\Delta(sts)$  \\
		\hline
		\vphantom{$\Big)$}$L(s)$ & $0$ & $L(e)$ & $0$ & $\Delta^{\frak p^\beta}(s)$ & $0$ & %
		 		$\Delta(ts)\oplus P(t)$\\
		\hline
		\vphantom{$\Big)$}$L(t)$ & $0$ & $0$ & $L(e)$ & $0$ & $\Delta^{\frak p^\alpha}(t)$ & %
				$\Delta(st)\oplus P(s)$\\
		\hline
		\vphantom{$\Big)$}$L(st)$ & $0$ & $0$ & $0$ &
				$\Delta^{\frak p^\beta}(e)$ & $0$ & $\Delta(t)$ \\
		\hline
		\vphantom{$\Big)$}$L(ts)$ & $0$ & $0$ & $0$ &
			$0$ & $\Delta^{\frak p^\alpha}(e)$ & $\Delta(s)$ \\
		\hline
		\vphantom{$\Big)$}$L(sts)$ & $0$ & $0$ & $0$ & $0$ & $0$ & $\Delta(e)$ \\
		\hline
	\end{tabular}
	\caption{The ``multiplication table'' for the bifunctor $F$ on the
		simple modules in $\catO_0$ for $\frak{sl}_3(\bbC)$.}
	\label{tab:fmult}
\end{center}
\end{table}

The first row and column for the $G$-table follow from
Remark~\ref{rem:lzero}. The zero entries are obtained by weight
arguments (e.g.\ \eqref{eq:tensorsupp} and \eqref{eq:dimotimes}).
Similarly one finds that $L(s)\otimes L(s)$ has a higest weight vector
of weight $st\cdot 0$. Since $L(s)$ is not $\enva{\langle X_{-\beta}\rangle}$-free,
it follows that $G_{L(s)}L(s)\cong L(st)$. By symmetry, $G_{L(t)}L(t)=L(ts)$.
Finally, for $G_{L(s)}L(t)\cong G_{L(t)}L(s)$, counting dimensions of the weight
spaces shows that $L(st)$ and $L(ts)$ each occur once in the Jordan-H\"older
decomposition, and $L(sts)$ occurs twice. Furthermore, since
$L(s)$ is $\enva{\langle X_{-\alpha}\rangle}$-free and $L(t)$ is
$\enva{\langle X_{-\beta}\rangle}$-free,
it follows that $G_{L(s)}L(t)$ is both
$\enva{\langle X_{-\alpha}\rangle}$-free and $\enva{\langle X_{-\beta}\rangle}$-free. Hence
neither $L(st)$ nor $L(ts)$ can occur in the socle of
$G_{L(s)}L(t)$. Finally, we have
\[
	\biggl(G_{L(s)}L(t)\biggr)^\star = G_{L(s)^\star}L(t)^\star = G_{L(s)}L(t),
\]
i.e. $G_{L(s)}L(t)$ is self-dual, so neither $L(st)$ nor $L(ts)$ can
occur in the top of $G_{L(s)}L(t)$. We conclude that the Loewy series
of $G_{L(s)}L(t)$ is
\[
	G_{L(s)}L(t)\cong
	\smd{L(sts)}{L(st)}{L(ts)}{L(sts)}.
\]

The corresponding table for $F$ is given in Table~\ref{tab:fmult}.
Since $F_{L(0)}M=M$, the first row is immediate. Furthermore,
by Proposition~\ref{prop:fnd}, and the fact that
$L(sts)=\Delta(sts)=\nabla(sts)$ we have $F_{L(sts)}L(sts)=\Delta(e)$.
Similarly, by Proposition~\ref{prop:pfnd} and the fact that
$L(st)=\Delta^{\frak p^\beta}(st)=\nabla^{\frak p^\beta}(st)$ we have
$F_{L(st)}L(st)=\Delta^{\frak p^\beta}(e)$ (and similarly for $F_{L(ts)}L(ts)$).
Using the
adjointness of $F$ and $G$, we can easily determine the
top of $F_{L(i)}L(j)$, i.e. $L(k)$ is in the top of $F_{L(i)}L(j)$ if and
only if $L(j)$ is in the socle of $G_{L(i)}L(k)$. In particular, this fact
and the $G$-table gives us all the $0$'s in the table.

The remaining cases need some additional case by case arguments. We begin with
$F_{L(s)}L(s)$. By adjointness, Table~\ref{tab:gmult} shows that
$F_{L(s)}L(s)$ has a simple top $L(e)$. Since $L(s)\in\catO_0^{\beta}$,
it follows that the possible modules are $L(e)$ and
\[
	\Delta^{\frak p^\beta}(e)=\sms{L(e)}{L(s)}.
\]
But by Proposition~\ref{prop:pacyclic} we have
$G_{L(s)}\Delta^{\frak p^\beta}(e)\in\calF_0^\beta(\Delta)$,
so by analysing the weights we see that
\[
	G_{L(s)}\Delta^{\frak p^\beta}(e)=\Delta^{\frak p^\beta}(s) = \sms{L(s)}{L(st)}.
\]
Hence
\begin{align*}
	\dim\Hom_{\frak g}\bigl(F_{L(s)}L(s), \Delta^{\frak p^\beta}(e)\bigr)
	&= \dim\Hom_{\frak g}\bigl(L(s), G_{L(s)}\Delta^{\frak p^\beta}(e)\bigr) \\
	&= \dim\Hom_{\frak g}\bigl(L(s), \Delta^{\frak p^\beta}(s)\bigr) \\
	&= 0,
\end{align*}
and so $F_{L(s)}L(s)\neq \Delta^{\frak p^\beta}(e)$ and we conclude that $F_{L(s)}L(s)=L(e)$. Analogously,
we get $F_{L(t)}L(t)=L(e)$.

Since $L(sts)=\Delta(sts)$, we have  $F_{L(st)}L(sts)\in\calF_0(\Delta)$
by Proposition~\ref{prop:acyclic}, and by the proof of
Proposition~\ref{prop:fpreservesdelta} we know that the Verma modules
$\Delta(\lambda)$ occuring in the Verma flag of $F_{L(st)}L(sts)$
are the ones satisfying $\lambda\in sts\cdot0 - \Supp L(st)$ and
$\lambda\leqslant 0$, with multiplicity equal to the dimension
of the weight space of $L(st)$ of weight $sts\cdot 0 - \lambda$.
The only such weight is $t\cdot 0$, with multiplicity $1$. Hence,
$F_{L(st)}L(sts)=\Delta(t)$. Analogously, $F_{L(ts)}L(sts)=\Delta(s)$.

Since $L(st)=\Delta^{\frak p^\beta}(st)$, we have $F_{L(s)}L(st)\in\calF_0^\beta(\Delta)$.
By a similar analysis as for $F_{L(st)}L(sts)$, using the proof of
Proposition~\ref{prop:pfreeproj}, we find that $F_{L(s)}L(st)$ has only
one generalised Verma quotient, $\Delta^{\frak p^\beta}(s)$, so
$F_{L(s)}L(st)=\Delta^{\frak p^\beta}(s)$.
Similarly, $F_{L(t)}L(ts)=\Delta^{\frak p^\alpha}(t)$.

Finally, for $F_{L(s)}L(sts)$, by the same analysis as for
$F_{L(st)}L(sts)$ we have that $F_{L(s)}L(sts)$ has a Verma flag
with Verma quotients $\Delta(e)$, $\Delta(t)$ and $\Delta(ts)$,
each with multiplicity $1$. Furthermore, using adjointness we find
from Table~\ref{tab:gmult} that $F_{L(s)}L(sts)$ has top
$L(ts)\oplus L(t)$. Thus, $F_{L(s)}L(sts)$ is a quotient of
$P(ts)\oplus P(t)$. The module $P(ts)\oplus P(t)$ has the following
standard filtration:
\[
	P(ts)\oplus P(t) = \smd{\Delta(ts)}{\Delta(t)}{\Delta(s)}{\Delta(e)}
	\oplus \sms{\Delta(t)}{\Delta(e)}.
\]
It is easy to see that this implies that
\[
	F_{L(s)}L(sts) = \Delta(ts)\oplus P(t).
\]
By symmetry, $F_{L(t)}L(sts) = \Delta(st)\oplus P(s)$, which completes
the table.

\vspace{1cm}

\noindent
Department of Mathematics, Uppsala University, Box 480,\\
\mbox{SE-75106 Uppsala}, Sweden.
e-mail: {\tt johan.kahrstrom@math.uu.se}.

\end{document}